\DeclareMathOperator{\End}{End}
\DeclareMathOperator{\Hom}{Hom}
\DeclareMathOperator{\spn}{span}
\DeclareMathOperator{\ad}{ad}
\DeclareMathOperator{\im}{Im}
\DeclareMathOperator{\Vir}{Vir}
\DeclareMathOperator{\Inv}{Inv}
\DeclareMathOperator{\Res}{Res}
\DeclareSymbolFont{bbold}{U}{bbold}{m}{n}
\DeclareSymbolFontAlphabet{\mathbbold}{bbold}
\renewcommand*\env@matrix[1][\arraystretch]{%
  \edef\arraystretch{#1}%
  \hskip -\arraycolsep
  \let\@ifnextchar\new@ifnextchar
  \array{*\c@MaxMatrixCols c}}
\newcommand{\medoplus}{\mathbin{\mathpalette\make@med\oplus}}
\newcommand{\medotimes}{\mathbin{\mathpalette\make@med\otimes}}
\newcommand{\make@med}[2]{%
  \vcenter{\hbox{%
    \scalebox{1.5}{$\m@th#1#2$}%
  }}%
}
\newcommand{\dotr}{\mbox{$\boldsymbol{\cdot}$}}
\newcommand{\g}{\ensuremath{\mathfrak{g}}}
\newcommand{\fa}{\mathfrak{a}}
\newcommand{\M}{\mathfrak{M}}
\newcommand{\B}{\mathfrak{B}}
\newcommand{\ZZ}{\mathbb{Z}}
\newcommand{\CC}{\mathbb{C}}
\newcommand{\NN}{\mathbb{N}}
\newcommand{\PP}{\mathbb{P}}
\newcommand{\W}{\mathcal{W}}
\DeclareMathOperator{\id}{id}
\DeclareMathOperator{\Der}{Der}
\DeclareMathOperator{\Inn}{Inn}
\DeclareMathOperator{\Aut}{Aut}
\DeclareMathOperator{\InnAut}{InnAut}
\newcommand{\del}{\partial}
\newcommand{\diff}[1]{\frac{d}{d#1}}
\newcommand{\diffeval}[2]{\left.\frac{d}{d#1}\right|_{#1=#2}}
    \newcommand{\extp}{\@ifnextchar^\@extp{\@extp^{\,}}}
    \def\@extp^#1{\mathop{\bigwedge\nolimits^{\!#1}}}
\DeclareMathOperator{\HH}{H}
\DeclareMathOperator{\HL}{HL}
\DeclareMathOperator{\Ab}{Ab}
\DeclareMathOperator{\Mix}{Mix}
\DeclareMathOperator{\ZMix}{ZMix}
\DeclareMathOperator{\BMix}{BMix}
\newcommand{\nonzero}{\setminus \{0\}}
\newcommand{\ang}[1]{\langle #1 \rangle}
\newcommand{\lsemidirect}[1]{{\ltimes}\!_{#1} \mkern2mu}
\newcommand\restr[2]{{
  \left.\kern-\nulldelimiterspace 
  #1 
  \vphantom{\big|}
  \right|_{#2}
  }}
\newcommand{\case}[2]{\vspace{2mm} \textbf{Case #1:} #2 \vspace{2mm}}
\newtheorem{proposition}{Proposition}[section]
\newtheorem{lemma}[proposition]{Lemma}
\newtheorem{theorem}[proposition]{Theorem}
\newtheorem{corollary}[proposition]{Corollary}
\theoremstyle{definition}
\newtheorem{defn}[proposition]{Definition}
\newtheorem{Notation}[proposition]{Notation}
\newtheorem{remark}[proposition]{Remark}
\newcommand{\etalchar}[1]{$^{#1}$}
\title[Semi-direct sums of the Witt algebra with its intermediate series modules]{Central extensions, derivations, and automorphisms of semi-direct sums of the Witt algebra with its intermediate series modules}
\author{Lucas Buzaglo}
\author{Girish S Vishwa}
\date{}
\g@addto@macro{\UrlBreaks}{\UrlOrds}   
\renewcommand{\arraystretch}{1.3}
\newcommand{\PreserveBackslash}[1]{\let\temp=\\#1\let\\=\temp}
\newcolumntype{C}[1]{>{\PreserveBackslash\centering}p{#1}}
\newcolumntype{R}[1]{>{\PreserveBackslash\raggedleft}p{#1}}
\newcolumntype{L}[1]{>{\PreserveBackslash\raggedright}p{#1}}
\keywords{Witt algebra, Virasoro algebra, intermediate series module, tensor density module, central extension, derivation, automorphism, Lie algebra cohomology, Leibniz cohomology}
\subjclass[2020]{17B40, 17B56 (Primary), 17B65, 17B68 (Secondary)}
\address[Buzaglo]{Department of Mathematics, UC San Diego, La Jolla, CA 92093-0112, USA}
\email{\href{mailto:lbuzaglo@ucsd.edu}{lbuzaglo@ucsd.edu}, ORCID: \href{https://orcid.org/0000-0002-7662-1802}{0000-0002-7662-1802}}
\address[Vishwa]{Maxwell Institute and School of Mathematics, The University
  of Edinburgh, James Clerk Maxwell Building, Peter Guthrie Tait Road, Edinburgh EH9 3FD, Scotland, United Kingdom}
\email{\href{mailto:G.S.Vishwa@sms.ed.ac.uk}{G.S.Vishwa@sms.ed.ac.uk}, ORCID: \href{https://orcid.org/0000-0001-5867-7207}{0000-0001-5867-7207}}
\begin{document}

\begin{abstract}
    Lie algebras formed via semi-direct sums of the Witt algebra $\W = \Der(\CC[t,t^{-1}])$ and its modules have become increasingly prominent in both physics and mathematics in recent years. In this paper, we complete the study of (Leibniz) central extensions, derivations and automorphisms of the Lie algebras formed from the semi-direct sum of the Witt algebra and its indecomposable intermediate series modules (that is, graded modules with one-dimensional graded components). Our techniques exploit the internal grading of the Witt algebra, which can be applied to a wider class of graded Lie algebras.
\end{abstract}

\maketitle
\tableofcontents

\section{Introduction}\label{sec:intro}

Throughout, we work over the field of complex numbers $\CC$. All vector spaces are $\CC$-vector spaces.

The Witt algebra $\W \coloneqq \Der(\CC[t,t^{-1}]) = \CC[t,t^{-1}]\diff{t}$ of vector fields on $\CC^\times$, and its universal central extension, the Virasoro algebra, are ubiquitous in both mathematics and theoretical physics. In this paper, we study Lie algebras obtained from taking semi-direct sums of the Witt algebra with its indecomposable intermediate series modules (see Definition \ref{def: intermediate series module}). Intermediate series modules of the Witt algebra arise naturally, since they are precisely those modules which have the same Hilbert series as the Witt algebra. In some sense, they are the simplest infinite-dimensional representations of $\W$. Defining an abelian Lie algebra structure on these modules, one can then take the semi-direct sum of the Witt algebra with its intermediate series modules to obtain new families of Lie algebras.

Indecomposable intermediate series modules of the Witt algebra were classified by Kaplansky and Santharoubane \cite{KaplanskySantharoubane} into are three families -- one parametrised by $\CC^2$ and the other two parametrised by $\PP^1$. The modules parametrised by $\CC^2$ are the well-known \emph{tensor density modules} $I(a,b)$, where $a,b \in \CC$. We denote the other two families by $A(\lambda)$ and $B(\lambda)$, where $\lambda \in \PP^1$ (see \eqref{eq:defining modules} for the definitions of the modules). These correspond to blowing up $\CC^2$ at the points $(0,0)$ and $(0,1)$. The modules $I(0,0)$ and $I(0,1)$ are the only reducible tensor density modules. Indeed, we have $I(0,0) \cong B(0)$ and $I(0,1) \cong A(0)$. Thus, the indecomposable intermediate series modules of the Witt algebra are parametrised by the plane blown up at two points. In some sense, the $A(\lambda)$s and $B(\lambda)$s are infinitely close to $I(0,1)$ and $I(0,0)$ respectively, in the moduli space of intermediate series modules of $\W$. The resulting semi-direct sums are denoted $\W(a,b) \coloneqq \W \ltimes I(a,b)$ and $\W_X(\lambda) \coloneqq \W \ltimes X(\lambda)$, where $X = A$ or $B$.

The only reducible tensor density modules $I(0,0)$ and $I(0,1)$ are intimately linked to the irreducible bounded modules of the Witt algebra, where we say that a $\ZZ$-graded module is \emph{bounded} if its homogeneous components are all finite-dimensional. The unique nontrivial proper submodule of $I(0,0) = \CC[t,t^{-1}]$ is $\CC$. The quotient $\widetilde{I} \coloneqq I(0,0)/\CC$ is irreducible, self-dual, and isomorphic to the nontrivial proper submodule $\widetilde{J} \coloneqq \spn\{t^n \ dt \mid n \in \ZZ \setminus \{-1\}\}$ of $I(0,1) = \CC[t,t^{-1}]dt$. It was shown independently by Mathieu \cite{Mathieu}, and Martin and Piard \cite{MartinPiard} that $\CC$, $\widetilde{I}$ and $\{I(a,b)\ | \ (a,b)\neq(0,0),(0,1)\}$ are the only irreducible bounded $\W$-modules.

In \cite{GaoJiangPei}, the authors computed central extensions, derivations, and automorphisms of the Lie algebras $\W(a,b)$. With such a strong interest in the properties of semi-direct sums of the Witt algebra and its tensor density modules, it is only natural to consider the algebras formed from the semi-direct sum with its two other families of indecomposable intermediate series modules. In this paper, we commence the study of the resulting two families of Lie algebras by computing their central extensions, derivations, and automorphisms. We further compute the central extensions and derivations of the Lie algebras $\widetilde{\W} \coloneqq \W \ltimes \widetilde{I}$, thereby completing the computation of these properties for Lie algebras formed from the semi-direct sum of the Witt algebra with its indecomposable intermediate series modules and with its irreducible bounded modules.

Our strategy to compute cohomology heavily exploits the \emph{internally graded} structure of $\W_X(\lambda)$ (see Definition \ref{def:internally graded}), which means that the cohomology of $\W_X(\lambda)$ is entirely supported in degree zero. In other words, the degree zero cohomology of $\W_X(\lambda)$ is isomorphic to its full cohomology. This significantly simplifies our computations, since we only have to consider cocycles of degree zero. These techniques can be applied in many more cases, since most of the representations of the Witt algebra that one might be interested in are internally graded. For example, our programme can be readily extended to semi-direct sums of the Virasoro algebra with its length 3 uniserial modules which were classified by Martin and Piard in \cite{MartinPiard2}.

For the interested reader, we note that special cases of $\W(a,b)$ have been of interest in theoretical physics (see \cite{BondivdBurgMetzner, BrownHenneaux, FarahmandSafariSheikhJabbari, GPSJTZ, BBBM, FigueroaVishwa} and references therein). Larger semi-direct sums with the Witt algebra and multiple copies of its intermediate series modules are also gaining interest \cite{GaoLiuPei, BatlleCarlesCampelloGomis, BatlleFigueroaGomisVishwa}, owing largely to their appearances in modern non-Loretnzian physics \cite{BagchiGopakumar, Aizawa}.

We present the organisation and main results of our paper. After introducing some preliminaries in Section \ref{sec:preliminaries}, we compute (Leibniz) central extensions of $\W_x(\lambda)$ in Sections \ref{sec:central extensions}--\ref{sec:Leibniz}.

\begin{theorem}[Theorems \ref{thm:main} and \ref{thm: second Leibniz cohomology}]\label{thm:intro central extensions}
    Let $\lambda \in \PP^1$. Then the spaces of (Lie) central extensions of $\W_A(\lambda)$ and $\W_B(\lambda)$ are given by
    \begin{align*}
        \HH^2(\W_A(\lambda)) &=
        \begin{cases}
            \CC \overline{\Omega}_{\Vir} \oplus \CC \overline{\Omega}_0^A \oplus \CC \overline{\Omega}_{\Mix}^A, &\text{if } \lambda=0.\\
            \CC \overline{\Omega}_{\Vir} \oplus \CC \overline{\Omega}_{\Mix}^A, &\text{if } \lambda \neq 0.
        \end{cases} \\
        \HH^2(\W_B(\lambda)) &= \CC \overline{\Omega}_{\Vir} \oplus \CC \overline{\Omega}_{\Ab}^B \oplus \CC \overline{\Omega}_{\Mix}^B,
    \end{align*}
    where the cocycles are defined in Theorem \ref{thm:main} and \eqref{eq:Virasoro cocycle}.

    Furthermore, $\W_A(\lambda)$ has one extra nontrivial Leibniz central extension (up to multiplication by a scalar) given by the Leibniz 2-cocycle induced by the symmetric, invariant bilinear form
    $$\theta_A(A_n,A_m) = \begin{cases}
        1, &\text{if } n = m = 0, \\
        0, &\text{otherwise},
    \end{cases}$$
    while $\W_B(\lambda)$ does not have any Leibniz central extensions which are not already extensions of Lie algebras. In other words,
    \begin{align*}
        \HL^2(\W_A(\lambda)) &\cong \HH^2(\W_A(\lambda))) \oplus \CC \overline{\theta}_A, \\
        \HL^2(\W_B(\lambda)) &\cong \HH^2(\W_B(\lambda)),
    \end{align*}
    where $\overline{\theta}_A$ is the image of $\theta_A$ in $\HL^2(\W_A(\lambda))$.
\end{theorem}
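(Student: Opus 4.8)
The plan is to prove the two displayed isomorphisms by establishing the underlying Theorems~\ref{thm:main} and~\ref{thm: second Leibniz cohomology}, in both cases collapsing the computation to degree zero via the internal grading.

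\emph{Lie central extensions.} Since $\W_X(\lambda)$ is internally graded by $\ad(L_0)$ (Definition~\ref{def:internally graded}), every class in $\HH^2$ is represented by a cocycle of degree zero, so I would only classify skew-symmetric $2$-cocycles $\Omega$ vanishing on homogeneous pairs whose degrees do not sum to zero. Writing $\g = \W \oplus X(\lambda)$, such an $\Omega$ is determined by three one-variable functions recording the blocks $\Omega(L_m, L_{-m})$, $\Omega(L_m, X_{-m})$ and $\Omega(X_m, X_{-m})$. I would then impose the cocycle identity on the four types of homogeneous triples $(L_a,L_b,L_c)$, $(L_a,L_b,X_c)$, $(L_a,X_b,X_c)$ and $(X_a,X_b,X_c)$: the first reproduces the classical computation $\HH^2(\W) = \CC\,\overline{\Omega}_{\Vir}$, the last is vacuous because $X(\lambda)$ is abelian, and the two mixed triples yield coupled linear recurrences for the remaining blocks into which the defining action~\eqref{eq:defining modules} of $X(\lambda)$ enters explicitly.

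\emph{Solving and quotienting.} Solving these recurrences and then factoring out the degree-zero coboundaries $\Omega = d\phi$ of degree-zero $1$-cochains $\phi$ produces the listed representatives: $\overline{\Omega}_{\Vir}$ from the Witt block, the mixed class $\overline{\Omega}_{\Mix}^X$ present for every $\lambda$, and the additional classes $\overline{\Omega}_0^A$ (surviving only when $\lambda = 0$) and $\overline{\Omega}_{\Ab}^B$ (present for all $\lambda$). The delicate point is the parameter dependence: the recurrences are solved by a generically one-dimensional space, but their solution space jumps at resonant values of $\lambda$, in particular at $\lambda = 0$ for the $A$-family. I expect this case analysis — deciding at each special value of $\lambda$ which candidate cocycles are genuine and which become coboundaries — to be the main obstacle, as it is exactly where the blow-up structure of the moduli space of intermediate series modules manifests itself.

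\emph{Leibniz central extensions.} For $\HL^2$ I would decompose the space of Leibniz $2$-cochains into its skew-symmetric and symmetric parts. The Leibniz coboundary of a $1$-cochain is skew, and splitting the Leibniz cocycle identity into its parts symmetric and antisymmetric under exchanging the last two arguments shows that the symmetric block of a cocycle is precisely a symmetric \emph{invariant} bilinear form, while the skew block satisfies the Chevalley--Eilenberg condition up to a source term built from the symmetric block. By internal grading every such invariant form is supported in degree zero, and the invariance recurrences (again fed by~\eqref{eq:defining modules}) force it to vanish on $\W$ and on the mixed block, reducing it to the module block $\theta(X_m, X_{-m})$. Solving this last recurrence yields a one-dimensional space spanned by $\theta_A$, concentrated on $A_0$, for the $A$-family and nothing for the $B$-family — the asymmetry reflecting that $A_0 \notin [\g,\g]$ whereas $[\g,\g] \supseteq B(\lambda)$. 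Because $\theta_A$ vanishes on $[\g,\g] \times \g$, the source term above drops out and the symmetric and skew parts decouple, giving $\HL^2(\W_A(\lambda)) \cong \HH^2(\W_A(\lambda)) \oplus \CC\,\overline{\theta}_A$ and $\HL^2(\W_B(\lambda)) \cong \HH^2(\W_B(\lambda))$, as claimed.
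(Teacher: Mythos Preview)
Your proposal is correct and follows essentially the same route as the paper: for $\HH^2$ you use the internal grading to reduce to degree zero and the tripartite Virasoro/abelian/mixing decomposition of cocycles (Proposition~\ref{prop: 2-cocycle splitting}), exactly as in Section~\ref{sec:central extensions}, and for $\HL^2$ your symmetric/skew splitting and the identification of the symmetric part with an invariant bilinear form is precisely the content of the exact sequence of Proposition~\ref{prop:Leibniz InvForm exact sequence}, which the paper quotes from \cite{HuPeiLiu} rather than rederiving. Your observation that $\theta_A$ vanishes on $[\g,\g]\times\g$ is the hands-on version of the paper's check that $\theta_A$ satisfies the Leibniz cocycle condition and lies in $\ker h$, so the two arguments coincide.
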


We then move on to computing automorphisms of $\W_A(\lambda)$ and $\W_B(\lambda)$ in Section \ref{sec:automorphisms}.

\begin{theorem}[Theorem \ref{thm:automorphisms}]\label{thm:intro automorphisms}
    Let $X \in \{A,B\}$ and $\lambda \in \PP^1$. Then
    $$\Aut(\W_X(\lambda)) \cong \begin{cases}
        \CC^\infty \rtimes (\ZZ/2\ZZ \ltimes (\CC \rtimes (\CC^\times \times \CC^\times))), &\text{if } X = A \text{ and } \lambda \in \{0,-1\}, \\
        \CC^\infty_0 \rtimes (\ZZ/2\ZZ \ltimes (\CC^2 \rtimes (\CC^\times \times \CC^\times))), &\text{if } X = B \text{ and } \lambda = 0, \\
        \CC^\infty_0 \rtimes (\ZZ/2\ZZ \ltimes (\CC \rtimes (\CC^\times \times \CC^\times))), &\text{if } X = B \text{ and } \lambda = -1, \\
        \CC^\infty \rtimes (\CC \rtimes (\CC^\times \times \CC^\times)), &\text{if } X = A \text{ and } \lambda \notin \{0,-1\}, \\
        \CC^\infty_0 \rtimes (\CC \rtimes (\CC^\times \times \CC^\times)), &\text{if } X = B \text{ and } \lambda \notin \{0,-1\},
    \end{cases}$$
    where $\CC^\infty$ and $\CC_0^\infty$ are defined in \eqref{eq:Cinfty} and \eqref{eq:C0infty}, and the explicit group structures are given in Theorem \ref{thm:automorphisms}.
\end{theorem}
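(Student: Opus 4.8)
The plan is to exploit the abelian ideal $X(\lambda)\subseteq\W_X(\lambda)$ and reduce the computation to the induced action on the quotient $\W$ together with cohomological data. First I would show that $X(\lambda)$ is a \emph{characteristic} ideal, hence preserved by every automorphism. This is immediate once one observes that it is the unique maximal abelian ideal: since $\W$ is simple it has no nonzero abelian ideals, so any abelian ideal $I\trianglelefteq\W_X(\lambda)$ projects to the zero abelian ideal of $\W\cong\W_X(\lambda)/X(\lambda)$ and therefore lies in $X(\lambda)$, while $X(\lambda)$ is itself abelian. Consequently every $\phi\in\Aut(\W_X(\lambda))$ descends to $\bar\phi\in\Aut(\W)$, giving a homomorphism $\pi:\Aut(\W_X(\lambda))\to\Aut(\W)$ and a short exact sequence $1\to N\to\Aut(\W_X(\lambda))\xrightarrow{\pi}\im\pi\to 1$ with $N=\ker\pi$.

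Next I would encode an automorphism as a triple $(\psi,\rho,\eta)$, where $\psi=\bar\phi\in\Aut(\W)$, $\rho=\phi|_{X(\lambda)}$, and $\eta:\W\to X(\lambda)$ records the off-diagonal part $\phi(L)=\psi(L)+\eta(L)$. Expanding the homomorphism identities, and using that all brackets landing in the abelian ideal $X(\lambda)$ commute, shows that $\rho$ must be a $\psi$-twisted module isomorphism $X(\lambda)\to X(\lambda)$ and that $\eta$ must be a $\psi$-twisted $1$-cocycle. When $\psi=\id$ these conditions decouple: $\rho\in\Aut_{\W}(X(\lambda))$ is an honest module automorphism and $\eta\in Z^1(\W,X(\lambda))$, and a short computation of $\phi_1\circ\phi_2$ yields $N\cong Z^1(\W,X(\lambda))\rtimes\Aut_{\W}(X(\lambda))$, with $\Aut_{\W}(X(\lambda))$ acting by post-composition on cocycles. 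Here the coboundaries $B^1(\W,X(\lambda))=\{L\mapsto L\cdot x : x\in X(\lambda)\}$ are precisely the inner automorphisms $\exp(\ad_x)$, $x\in X(\lambda)$, which form a group isomorphic to $X(\lambda)/X(\lambda)^{\W}$.

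I would then compute the three ingredients. For $\im\pi$: using the classical description $\Aut(\W)\cong\CC^\times\rtimes\ZZ/2\ZZ$, generated by the grading rescalings $\sigma_\mu$ and the flip $\tau$ induced by $t\mapsto t^{-1}$, the $\sigma_\mu$ always lift (they preserve the grading and act on $X(\lambda)$ through it), whereas $\tau$ lifts if and only if $X(\lambda)\cong X(\lambda)^{\tau}$; checking the defining module relations shows this self-duality holds precisely when $\lambda\in\{0,-1\}$, which is the source of the $\ZZ/2\ZZ$ factor. For $\Aut_{\W}(X(\lambda))$: indecomposability together with the one-dimensional graded pieces forces a degree-zero module automorphism to be a scalar, giving $\CC^\times$. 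For the cocycles the internal grading does the heavy lifting: since the cohomology of $\W_X(\lambda)$ is concentrated in degree zero, $\HH^1(\W,X(\lambda))$ is finite-dimensional and reduces to a degree-zero computation, yielding $\CC$ for $A(\lambda)$ and for $B(\lambda)$ with $\lambda\neq 0$, and $\CC^2$ for the most degenerate module $B(0)\cong I(0,0)$, while $B^1$ is infinite-dimensional, equal to $\CC^\infty$ for $X=A$ (no nonzero invariants) and to $\CC^\infty_0$ for $X=B$ (because of the trivial submodule $\CC v_0$, so that $X(\lambda)^{\W}=\CC v_0$).

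Finally I would assemble these pieces. Writing $Z^1=B^1\oplus \HH^1$ via a choice of cocycle representatives, and combining the splittings of $\pi$ coming from explicit lifts of $\sigma_\mu$ and $\tau$, one rearranges $N\rtimes\im\pi$ into the stated form $\CC^\infty\rtimes(\ZZ/2\ZZ\ltimes(\HH^1\rtimes(\CC^\times\times\CC^\times)))$ and its variants, where the two $\CC^\times$ factors are the Witt rescaling and the module scaling, and the semidirect actions record how the scalings and the flip act on cocycles by (degree-weighted) rescaling and pullback. The main obstacle is the bookkeeping at this last stage: one must verify that all of the extensions split and, more delicately, pin down the exact semidirect-product actions together with the special-$\lambda$ jumps --- the lifting criterion $\lambda\in\{0,-1\}$ for $\tau$ and the extra cohomology class for $B(0)$ --- since these are exactly what distinguish the five cases in the statement.
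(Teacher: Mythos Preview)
Your approach is correct and, in substance, coincides with the paper's: both arguments hinge on the fact that $X(\lambda)$ is preserved by every automorphism, the induced map to $\Aut(\W)\cong\CC^\times\rtimes\ZZ/2\ZZ$, the determination of which elements of $\Aut(\W)$ lift (giving the $\lambda\in\{0,-1\}$ dichotomy), and the explicit solution of the remaining equations for the ``off-diagonal'' part and the module action. The difference is one of packaging. You organise the kernel $N$ of $\pi$ abstractly as $Z^1(\W,X(\lambda))\rtimes\Aut_{\W}(X(\lambda))$ and then read off the answer from the dimensions of $B^1$ and $\HH^1(\W,X(\lambda))$; the paper instead writes down explicit one-parameter families of automorphisms, uses the inner ones to reduce an arbitrary automorphism to degree zero, and then solves the resulting recursions for the coefficients $\gamma_n$ and $\xi_n$ by hand (these are exactly your cocycle values and module-automorphism scalars, and the paper notes that the $\gamma_n$-equations are literally the mixing-cocycle equations from Section~\ref{sec:central extensions}). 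Your framework makes the structure of the answer transparent and explains \emph{why} the same functional equations recur; the paper's explicit route, on the other hand, delivers the precise semidirect-product laws and the special-$\lambda$ behaviour directly, which is the part you correctly flag as the remaining bookkeeping.
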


In Section \ref{sec:derivations}, we compute derivations of $\W_A(\lambda)$ and $\W_B(\lambda)$. Our computation of automorphisms allows us to define derivations of $\W_A(\lambda)$ and $\W_B(\lambda)$ by differentiating one-parameter subgroups of the corresponding automorphism group. All outer derivations of $\W_A(\lambda)$ and $\W_B(\lambda)$ can be obtained in this way.

\begin{theorem}[Theorem \ref{thm:derivations}]\label{thm:intro derivations}
    Let $\lambda \in \PP^1$. Then
    \begin{align*}
        \Der(\W_A(\lambda)) &= \Inn(\W_A(\lambda)) \oplus \CC d_{\Ab} \oplus \CC d_\lambda^A; \\
        \Der(\W_B(\lambda)) &= \begin{cases}
            \Inn(\W_B(\lambda)) \oplus \CC d_{\Ab} \oplus \CC d_0^B \oplus \CC \del_0^B, &\text{if } \lambda = 0, \\
            \Inn(\W_B(\lambda)) \oplus \CC d_{\Ab} \oplus \CC d_\lambda^B, &\text{if } \lambda \neq 0,
        \end{cases}
    \end{align*}
    where the derivations are defined in Notation \ref{ntt:derivations} and Lemma \ref{lem:inner derivations}. Consequently,
    $$\dim(\HH^1(\W_A(\lambda);\W_A(\lambda))) = 2, \quad \dim(\HH^1(\W_B(\lambda);\W_B(\lambda))) = \begin{cases}
        3, &\text{if } \lambda = 0, \\
        2, &\text{if } \lambda \neq 0.
    \end{cases}$$
\end{theorem}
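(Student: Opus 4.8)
The plan is to leverage the two structural results already established: the internal grading of $\W_X(\lambda)$ (Definition \ref{def:internally graded}) and the full description of $\Aut(\W_X(\lambda))$ from Theorem \ref{thm:automorphisms}. First I would use the grading to reduce everything to degree zero. Write $\W_X(\lambda) = \bigoplus_n V_n$ for its internal grading, with grading element $L_0$, so that $[L_0, y] = n y$ for $y \in V_n$. If $d$ is a homogeneous derivation of degree $m \neq 0$ and $x \in V_n$, then applying $d$ to $[L_0, x] = n x$ gives $n\, d(x) = [d(L_0), x] + (n+m)\, d(x)$, whence $d(x) = \ad\!\left(-\tfrac{1}{m} d(L_0)\right)(x)$. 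Thus every derivation of nonzero degree is inner, and it suffices to classify the degree-zero derivations modulo the degree-zero inner derivations, which are exactly $\ad(a L_0 + b X_0)$ for $a, b \in \CC$ (here $L_n$ is the standard basis of $\W$ and $X_n$, with $X \in \{A,B\}$, the basis of the module). In particular $\HH^1(\W_X(\lambda); \W_X(\lambda))$ is carried entirely by degree-zero cocycles.

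Next I would produce the outer derivations by differentiating one-parameter subgroups of $\Aut(\W_X(\lambda))$, as indicated before the statement. The torus $\CC^\times \times \CC^\times$ differentiates to the two-dimensional space of diagonal degree-zero derivations; a short computation with the relation $[L_m, L_n] = (n-m) L_{m+n}$ shows that its $\W$-direction is the inner grading derivation $\ad(L_0)$, leaving a single outer class, the module-rescaling derivation $d_{\Ab}$. The additive factor $\CC$ (respectively the extra $\CC$ occurring only in $\Aut(\W_B(0))$) differentiates to the unipotent derivation $d_\lambda^X$ (respectively $\del_0^B$). I would verify directly that each satisfies the Leibniz rule and is not of the form $\ad(a L_0 + b X_0)$, so that they give linearly independent classes in $\HH^1$; this yields the lower bounds $\dim \HH^1 \geq 2$ in general and $\geq 3$ for $\W_B(0)$.

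The substantive step is the matching upper bound: showing these are all the outer derivations. I would take an arbitrary degree-zero derivation $d$, write $d(L_n) = \alpha_n L_n + (\text{module part})$ and $d(X_n) = \gamma_n X_n + (\text{Witt part})$, and impose the derivation identity on the three families of brackets $[L_m, L_n]$, $[L_m, X_n]$ and $[X_m, X_n]$. The $\W$-brackets force $\alpha_n$ to be a constant multiple of $n$, hence equal to a multiple of $\ad(L_0)$ on $\W$; the mixed and module brackets then produce recursions in $\gamma_n$ and in the off-diagonal coefficients. After normalising by subtracting a suitable $\ad(a L_0 + b X_0)$, I would solve these recursions to show the surviving freedom is precisely $\CC d_{\Ab} \oplus \CC d_\lambda^X$, with the extra solution $\del_0^B$ appearing only when $X = B$ and $\lambda = 0$. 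Combining with the grading reduction and the inner derivations of Lemma \ref{lem:inner derivations} gives the stated direct-sum decompositions, and counting the independent outer classes gives the dimension formula for $\HH^1$.

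The main obstacle is this upper-bound computation. The recursions coming from $[L_m, X_n]$ are governed by the non-split module structure of $A(\lambda)$ and $B(\lambda)$ rather than a simple tensor-density action, so the coefficient system is more rigid and its solution space jumps at the special parameters. Tracking the $A$-versus-$B$ dichotomy together with the values $\lambda \in \{0,-1\}$ — and, crucially, pinning down exactly when an additional solution such as $\del_0^B$ is permitted — is the delicate part; the grading reduction is what guarantees that no derivation of nonzero degree is overlooked, so that differentiating $\Aut(\W_X(\lambda))$ indeed exhausts the outer derivations.
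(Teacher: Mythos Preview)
Your plan is correct and matches the paper's approach: reduce to degree zero via the internal grading, write a degree-zero derivation in coordinates $d(L_n)=\alpha_n L_n+\beta_n X_n$, $d(X_n)=\eta_n L_n+\theta_n X_n$, show $\eta_n=0$ and $\alpha_n=n\alpha_1$, normalise by inner derivations and by $d_{\Ab}$, and solve the remaining recursions for $\beta_n,\theta_n$. Two small remarks: the paper saves work by observing that the $\beta$-recursion for $\W_A(\lambda)$ is literally the mixing-cocycle equation \eqref{eq:simple mixing cocycle B} for $\W_B(\lambda)$ (and vice versa), so the solution space is already computed in Propositions \ref{prop:W_A mixing cocycles} and \ref{prop:W_B mixing cocycles}; and $\lambda=-1$ is \emph{not} special for derivations---the extra $\ZZ/2\ZZ$ in $\Aut(\W_X(-1))$ is discrete and contributes nothing upon differentiation, so the only parameter jump to track is $\lambda=0$ in the $B$-case.
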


A key technique that we use throughout is the spitting of cohomology groups of semi-direct sum Lie algebras, summarised in the following proposition:

\begin{proposition}[Proposition \ref{prop: cohomology splitting}] \label{prop:intro cohomology splitting}
    For any Lie algebra $\g$ and $\g$-module $\M$, the cohomology groups of $\g\ltimes \M$, where $\M$ is endowed with an abelian Lie bracket, split as follows:
    \begin{equation*}
        \HH^n (\g\ltimes \M) \cong \bigoplus_{k + \ell = n} \HH^k(\g;\extp^{\ell}\M^*),
    \end{equation*}
    where $\M^* = \Hom(\M, \CC)$ is the dual of $\M$.
\end{proposition}

In particular, we exploit the fact the cohomology of $\W_X(\lambda)$ with values in the trivial module can be split into sums over cohomology groups of $\W$ with values in antisymemtric powers of the \emph{restricted dual} of $X$. We need only consider the restricted dual since the modules with which we are working are $\ZZ$-graded. This allows us to use the fact that $A(\lambda)$ is the restricted dual of $B(\lambda)$ to minimise the number of computations required. For example, when computing derivations of $\W_B(\lambda)$, we encounter equations that appear when computing central extensions of $\W_A(\lambda)$, so we only have to compute solutions to the equations once. This is not surprising in light of Proposition \ref{prop:intro cohomology splitting}: setting $n=2$, we observe that one of the components of $\HH^2(\W_X(\lambda))$ is $\HH^1(\W;X(\lambda)')$, which is part of the computation of derivations of $\W_X(\lambda)$.

Finally, we end the paper by computing (Leibniz) central extensions and derivations of $\widetilde{\W}$ in Section \ref{sec:W tilde}. The techniques and proofs are almost identical to those for $\W_X(\lambda)$.

\begin{theorem}[Theorems \ref{thm:central extensions W tilde}, \ref{thm:Leibniz W tilde}, and \ref{thm:derivations W tilde}]\label{thm:intro W tilde}
    The space of central extensions of $\widetilde{\W}$ is given by
    $$\HL^2(\widetilde{\W}) \cong \HH^2(\widetilde{\W}) = \CC \overline{\Omega}_{\Vir} \oplus \CC \overline{\Omega}_1 \oplus \CC \overline{\Omega}_2 \oplus \CC \overline{\Omega}_{\Ab},$$
    where $\Omega_{\Vir}$ is the Gelfand--Fuchs cocycle in \eqref{eq:Virasoro cocycle}, and the other cocycles are defined in Theorem \ref{thm:central extensions W tilde}. Furthermore,
    $$\Der(\widetilde{\W}) = \Inn(\widetilde{\W}) \oplus \CC \delta_1 \oplus \CC \delta_2 \oplus \CC d_{\Ab},$$
    where $\delta_1$ and $\delta_2$ are defined in Section \ref{sec:preliminaries}, and $d_{\Ab}$ is defined in Theorem \ref{thm:derivations W tilde}. Consequently,
    $$\dim(\HH^2(\widetilde{\W})) = 4, \quad \dim(\HH^1(\widetilde{\W})) = 3.$$
\end{theorem}

One of the ingredients in our computations of central extension and derivations is the cohomology space $\HH^1(\W;X(\lambda))$. Although we compute this directly, it is worth noting that it could be analysed by using the long exact sequences in cohomology arising from the short exact sequences
\begin{align*}
    &0 \to \widetilde{I} \to A(\lambda) \to \CC \to 0 \\
    &0 \to \CC \to B(\lambda) \to \widetilde{I} \to 0.
\end{align*}
Together with work of Gao, Jiang, and Pei \cite{GaoJiangPei} (see Theorem \ref{thm:GJP}),
\begin{enumerate}
    \item Theorems \ref{thm:intro central extensions}, \ref{thm:intro automorphisms}, and \ref{thm:intro derivations} complete the study of derivations, central extensions and automorphisms of the Lie algebras formed from the semi-direct sum of the Witt algebra with all its indecomposable intermediate series modules, while
    \item Theorem \ref{thm:intro W tilde} completes the study of derivations and central extensions of the Lie algebras formed from the semi-direct sum of the Witt algebra with all its irreducible bounded modules.
\end{enumerate}
 We compile all these results into the following tables, where we have given simplified forms of the groups of automorphisms for conciseness and for ease of comparison with \cite[Theorem 5.2]{GaoJiangPei}.

\begin{table}[ht]
\centering
\begin{tabular}{|c|c|ccc|}
\hline
\multirow{2}{*}{\textbf{Lie algebra} $\g$}     & \multirow{2}{*}{\textbf{Value of parameters}}          & \multicolumn{3}{c|}{\textbf{Dimension of}}                                                     \\ \cline{3-5} 
                                 &                                               & \multicolumn{1}{c|}{$\HH^2(\g)$}          & \multicolumn{1}{c|}{$\HL^2(\g)$}         & $\HH^1(\g;\g)$       \\ \hline
\multirow{5}{*}{$\W(a,b)$}       & $(a,b) \in \CC^2 \setminus \{(\frac{1}{2},0), (0,1),$      & \multicolumn{1}{c|}{\multirow{2}{*}{1}} & \multicolumn{1}{c|}{\multirow{2}{*}{1}} & \multirow{2}{*}{1} \\
                                 & \multicolumn{1}{r|}{$(0,2), (0,1),(0,0)\}$}   & \multicolumn{1}{c|}{}                   & \multicolumn{1}{c|}{}                   &                    \\ \cline{2-5} 
                                 & $a = \frac{1}{2}, b = 0$                      & \multicolumn{1}{c|}{2}                  & \multicolumn{1}{c|}{2}                  & 1                  \\ \cline{2-5} 
                                 & $a = 0, b = -1$                               & \multicolumn{1}{c|}{2}                  & \multicolumn{1}{c|}{2}                  & 1                  \\ \cline{2-5} 
                                 & $a = 0, b = 2$                                & \multicolumn{1}{c|}{1}                  & \multicolumn{1}{c|}{2}                  & 2                  \\ \hline
$\W(0,1) \cong \W_A(0)$          & $a = 0, b = 1; \quad \lambda = 0$             & \multicolumn{1}{c|}{3}                  & \multicolumn{1}{c|}{4}                  & 2                  \\ \hline
$\W(0,0) \cong \W_B(0)$          & $a = 0, b = 0; \quad \lambda = 0$             & \multicolumn{1}{c|}{3}                  & \multicolumn{1}{c|}{3}                  & 3                  \\ \hline
\multirow{2}{*}{$\W_A(\lambda)$} & \multirow{2}{*}{$\lambda \in \PP^1 \nonzero$} & \multicolumn{1}{c|}{\multirow{2}{*}{2}} & \multicolumn{1}{c|}{\multirow{2}{*}{3}} & \multirow{2}{*}{2} \\
                                 &                                               & \multicolumn{1}{c|}{}                   & \multicolumn{1}{c|}{}                   &                    \\ \hline
\multirow{2}{*}{$\W_B(\lambda)$} & \multirow{2}{*}{$\lambda \in \PP^1 \nonzero$} & \multicolumn{1}{c|}{\multirow{2}{*}{3}} & \multicolumn{1}{c|}{\multirow{2}{*}{3}} & \multirow{2}{*}{2} \\
                                 &                                               & \multicolumn{1}{c|}{}                   & \multicolumn{1}{c|}{}                   &                    \\ \hline
\multirow{2}{*}{$\widetilde{\W}$} & \multirow{2}{*}{$-$} & \multicolumn{1}{c|}{\multirow{2}{*}{4}} & \multicolumn{1}{c|}{\multirow{2}{*}{4}} & \multirow{2}{*}{3} \\
                                 &                                               & \multicolumn{1}{c|}{}                   & \multicolumn{1}{c|}{}                   &                    \\ \hline
\end{tabular}
\caption{The dimensions of the cohomology spaces of $\W(a,b)$, $\W_X(\lambda)$, and $\widetilde{\W}$}
\end{table}

\begin{table}[ht]
\centering
\begin{tabular}{|c|c|c|}
\hline
\textbf{Lie algebra}             & \textbf{Value of parameters}                    & \textbf{Automorphism group}                                            \\ \hline
\multirow{2}{*}{$\W(a,b)$}       & $a \in \CC \setminus \frac{1}{2}\ZZ, b \in \CC$ & $\CC^\infty \rtimes (\CC^\times \times \CC^\times)$                    \\ \cline{2-3} 
                                 & $a \in \frac{1}{2}\ZZ, b \in \CC$               & $\CC^\infty \rtimes (\ZZ/2\ZZ \ltimes (\CC^\times \times \CC^\times))$ \\ \hline
$\W(0,1) \cong \W_A(0)$          & $a = 0, b = 1; \quad \lambda = 0$               & $\CC^\infty \rtimes (\ZZ/2\ZZ \ltimes (\CC^\times \times \CC^\times))$ \\ \hline
$\W(0,0) \cong \W_B(0)$          & $a = 0, b = 0; \quad \lambda = 0$               & $\CC^\infty \rtimes (\ZZ/2\ZZ \ltimes (\CC^\times \times \CC^\times))$ \\ \hline
\multirow{2}{*}{$\W_A(\lambda)$} & $\lambda \in \PP^1 \setminus \{0,-1\}$          & $\CC^\infty \rtimes (\CC^\times \times \CC^\times)$                    \\ \cline{2-3} 
                                 & $\lambda = -1$                                  & $\CC^\infty \rtimes (\ZZ/2\ZZ \ltimes (\CC^\times \times \CC^\times))$ \\ \hline
\multirow{2}{*}{$\W_B(\lambda)$} & $\lambda \in \PP^1 \setminus \{0,-1\}$          & $\CC^\infty \rtimes (\CC^\times \times \CC^\times)$                    \\ \cline{2-3} 
                                 & $\lambda = -1$                                  & $\CC^\infty \rtimes (\ZZ/2\ZZ \ltimes (\CC^\times \times \CC^\times))$ \\ \hline
\end{tabular}
\caption{Structures (simplified) of the automorphism groups of $\W(a,b)$ and $\W_X(\lambda)$}
\end{table}

\newpage

\noindent \textbf{Acknowledgements:} This work was completed as part of the authors' PhD studies at the University of Edinburgh. The authors would like to thank Susan Sierra for extensive comments on an earlier version of the paper, Martin Schlichenmaier for interesting discussions, and Friedrich Wagemann for pointing out that the reference \cite{Fuchs} contains useful results concerning internally graded Lie algebras. The second author would additionally like to thank Jos\'{e} Figueroa-O'Farrill for insights into the various physical contexts in which special cases of these Lie algebras appear in physics. The authors would also like to thank the anonymous referee for their detailed comments, and in particular for suggesting Proposition \ref{prop: cohomology splitting} to us. The second author is supported by the Science and Technologies Facilities Council [grant number 2615874].

\section{Preliminaries}\label{sec:preliminaries}

\subsection{Lie algebra cohomology}

In this section, we provide a brief review of Lie algebra cohomology. We refer the reader to \cite[Chapter 1]{Fuchs} for a more thorough introduction to the subject. Let $\g$ be a Lie algebra and $\rho \colon \g \rightarrow \End\M$ be a $\g$-module, both possibly infinite-dimensional.

\begin{defn}
    Let $p \in \NN$. The \emph{$p$-cochains of $\g$ with values in $\M$}, denoted $C^p(\g; \M)$, are the $\M$-valued alternating $p$-linear functionals on $\g$. Under the identification
    $$C^p(\g;\M) = \Hom\left(\extp^p \g, \M\right) \cong \extp^p \g^* \medotimes \M,$$ the space of cochains is a $\g$-module by natural extension of the coadjoint action of $\g$ on $\g^*$. For completeness, also note that $C^p(\g;\M) = 0$ if $p < 0$ and $C^0(\g;\M)\cong \M$.
    
    The \emph{Lie algebra cohomology of $\g$ with values in $\M$}, denoted $\HH^*(\g;\M)$, is the cohomology of the cochain complex
    $$\cdots \to C^{p-1}(\g;\M) \to C^p (\g;\M) \to C^{p+1}(\g;\M) \to \cdots$$
    with differential
    \begin{equation} \label{eq: LA cohomology d}
    \begin{split}
        d\varphi(x_1, x_2,\dots, x_{p+1})
            &=\sum_{i=1}^{p+1} (-1)^{i+1}\rho(x_i) \varphi (x_1, \dots, \hat{x}_i, x_{i+1}, \dots, x_{p+1}) \\
            &+\sum_{1\leq i< j \leq p+1} (-1)^{i+j} \varphi ([x_i, x_j], x_1, \dots, \hat{x}_i, \dots, \hat{x}_j, \dots, x_{p+1}),       
    \end{split}
    \end{equation}
    where $\hat{\ }$ denotes omission, for all $\varphi \in C^p(\g;\M)$ and $x_1,x_2,\dots,x_{p+1} \in \g$.

    We denote the space of $p$-cocycles (respectively, $p$-coboundaries) of $\g$ with values in $\M$ by $Z^p(\g;\M)$ (respectively, $B^p(\g;\M)$), and so
    $$\HH^p(\g;\M) \coloneqq Z^p(\g;\M)/B^p(\g;\M).$$
\end{defn}

If $\M = \CC$ regarded as the trivial $\g$-module, the first term in \eqref{eq: LA cohomology d} vanishes. For convenience, we write $C^p(\g) \coloneqq C^p(\g;\CC)$, $Z^p(\g) \coloneqq Z^p(\g;\CC)$, $B^p(\g) \coloneqq B^p(\g;\CC)$, and $\HH^p(\g) \coloneqq \HH^p(\g;\CC)$.

We briefly discuss some interpretations of the low-dimensional cohomology spaces of a Lie algebra. Firstly, it is easy to see from the definition of Lie algebra cohomology that $\HH^1(\g) \cong \g/[\g,\g]$ is the abelianisation of the Lie algebra $\g$.

The cohomology space $\HH^1(\g;\M)$ of $\g$ with coefficients in a $\g$-module $\M$ also has an interpretation: it is the space of \emph{outer derivations of $\g$ into $\M$}: we have
$$\HH^1(\g;\M) \cong \Der(\g,\M)/\Inn(\g,\M).$$
We recall the notion of a derivation of a Lie algebra below.

\begin{defn}\label{def:derivation}
    A \emph{derivation} of a Lie algebra $\g$ with values in a $\g$-module $\M$ is a linear map $d \colon \g \to M$ such that
    $$d([x,y]) = x \cdot d(y) - y \cdot d(x)$$
    for $x,y \in \g$. We write $\Der(\g,\M)$ for the set of derivations $\g$ with values in $\M$, and simply write $\Der(\g)$ instead of $\Der(\g,\g)$.
    
    A derivation $d \in \Der(\g,\M)$ is \emph{inner} if there exists $m \in \M$ such that $d(x) = x \cdot m$. We write $\Inn(\g,\M)$ for the set of inner derivations of $\g$ with values in $\M$. As before, we write $\Inn(\g)$ instead of $\Inn(\g,\g)$.
\end{defn}

The second cohomology $\HH^2(\g)$ classifies one-dimensional \emph{central extensions} of $\g$, as we now explain.

\begin{defn}
    Let $0 \to \mathfrak{a} \to \widehat{\g} \to \g \to 0$ be a short exact sequence of Lie algebras. We say that $\widehat{\g}$ is a \emph{central extension} of $\g$ (by $\mathfrak{a}$) if $\mathfrak{a}$ is contained in the center of $\widehat{\g}$.
\end{defn}

It is easy to check that every one-dimensional central extension corresponds to a 2-cocycle (with values in $\CC$) and vice versa \cite[Chapter 4]{Schottenloher}. Central extensions corresponding to 2-coboundaries are always trivial (i.e. give rise to split exact sequences), and $\omega_1, \omega_2 \in Z^2(\g; \mathfrak{a})$ generate equivalent central extensions if and only if $\omega_1 - \omega_2 \in B^2(\g; \mathfrak{a})$. Thus, the second cohomology group $\HH^2(\g; \mathfrak{a})$ is in one-to-one correspondence with the set of equivalence classes of central extensions by $\mathfrak{a}$.
Given that any $\mathfrak{a}$ can be written as a direct sum of one-dimensional Lie algebras, it is natural to ask if studying the one-dimensional central extensions of $\g$ informs us about the \emph{universal} central extension of $\g$, defined below.

\begin{defn} \label{def: universal central extension}
    A central extension $0 \to \fa \to \widehat{\g} \to \g \to 0$ of a Lie algebra $\g$ is \emph{universal} if for every other central extension $0 \to \mathfrak{b} \to \widetilde{\g} \to \g \to 0$, there exist unique maps $\widehat{\g} \to \widetilde{\g}$ and $\mathfrak{a} \to \mathfrak{b}$ such that the following diagram commutes:
    \begin{center}
        \begin{tikzcd}
            0 \arrow[r] & \mathfrak{a} \arrow[r] \arrow[d]   & \widehat{\g} \arrow[r] \arrow[d]   & \g \arrow[r] \arrow[d, "\id"] & 0 \\
            0 \arrow[r] & \mathfrak{b} \arrow[r] & \widetilde{\g} \arrow[r] & \g \arrow[r, rightarrow]      & 0
        \end{tikzcd}.
    \end{center}
\end{defn}

It is well-known that a Lie algebra $\g$ has a universal central extension if and only if $\g$ is perfect \cite[Theorem 7.9.2]{Weibel}.

The following well-known result makes explicit the relationship between $\HH^2(\g)$ (which classifies \emph{one-dimensional} central extensions of $\g$) and universal central extensions of a perfect Lie algebra.
\begin{proposition}
    Let $\g$ be a perfect Lie algebra and let $n = \dim \HH^2(\g)$. Let $\Omega_1,\ldots,\Omega_n \in Z^2(\g)$ be 2-cocycles whose images in cohomology form a basis for $\HH^2(\g)$. Define
    $$\widehat{\g} = \g \oplus \CC c_1 \oplus \ldots \oplus \CC c_n,$$
    where the $c_i$ are central, and the other brackets are given by
    \begin{equation}\label{eq: universal extension}
        [x,y]_{\widehat{\g}} = [x,y]_\g + \sum_{i=1}^n \Omega_i(x,y)c_i,
    \end{equation}
    where $x,y \in \g$. Then $\widehat{\g}$ is the universal central extension of $\g$.
\end{proposition}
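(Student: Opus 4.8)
The plan is to verify the universal property of Definition~\ref{def: universal central extension} directly. Write $\fa = \CC c_1 \oplus \cdots \oplus \CC c_n$ and let $p\colon\widehat{\g}\to\g$ be the projection killing the $c_i$, so that $0\to\fa\to\widehat{\g}\xrightarrow{p}\g\to0$ is the central extension attached to the $\fa$-valued cochain $\Omega = \sum_i \Omega_i c_i$. First I would record that $\widehat{\g}$ is genuinely a Lie algebra: the bracket \eqref{eq: universal extension} is bilinear and alternating because each $\Omega_i$ is, and since the $c_i$ are central the Jacobi identity for $\widehat{\g}$ reduces, after cancelling the $\g$-terms via the Jacobi identity of $\g$, to $\sum_{\mathrm{cyc}}\Omega_i([x,y],z) = 0$ for each $i$; this is exactly $d\Omega_i = 0$ read off from \eqref{eq: LA cohomology d} with trivial coefficients. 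Thus $\widehat{\g}$ is a central extension of $\g$ by $\fa$.

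The crux is to show that $\widehat{\g}$ is \emph{perfect}; this is where the hypothesis that the classes $[\Omega_i]$ are linearly independent (not merely spanning) is really used, and it is what makes the uniqueness argument work below. Since $\g = [\g,\g]$ and $p$ is surjective, one gets $\widehat{\g} = [\widehat{\g},\widehat{\g}] + \fa$, so it suffices to prove $\fa\subseteq[\widehat{\g},\widehat{\g}]$. I would first isolate the rank-one lemma: for perfect $\g$, a one-dimensional central extension $\g\oplus_\omega\CC$ is perfect if and only if $[\omega]\neq0$ in $\HH^2(\g)$ (if $\omega = d\mu$ then $x\mapsto(x,-\mu(x))$ splits it; conversely, a proper derived subalgebra must be a complement to the central line and yields such a splitting). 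Now suppose $\fa\not\subseteq[\widehat{\g},\widehat{\g}]$ and pick $0\neq f\in\fa^*$ vanishing on $\fa\cap[\widehat{\g},\widehat{\g}]$. The scalar cocycle $\omega_f = \sum_i f(c_i)\Omega_i$ has $[\omega_f]\neq0$ by linear independence, and $(x,c)\mapsto(x,f(c))$ is a surjection $\widehat{\g}\to\g\oplus_{\omega_f}\CC$ onto a perfect algebra; pulling back a central element exhibits an element of $\fa\cap[\widehat{\g},\widehat{\g}]$ on which $f$ is nonzero, a contradiction. Hence $\widehat{\g}$ is perfect. I expect this paragraph to be the main obstacle; the rest is essentially bookkeeping.

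For existence of the comparison maps, take any central extension $0\to\mathfrak{b}\to\widetilde{\g}\xrightarrow{\pi}\g\to0$ and a linear section $s$ of $\pi$, with associated cocycle $\beta(x,y) = [s(x),s(y)] - s([x,y])\in Z^2(\g;\mathfrak{b})$. Because $\CC$ is a field, $\Hom_\CC(-,\mathfrak{b})$ is exact, giving the universal coefficient isomorphism $\HH^2(\g;\mathfrak{b})\cong\Hom(\HH_2(\g),\mathfrak{b})$; and since $n = \dim\HH^2(\g)<\infty$ forces $\HH_2(\g)$ to be $n$-dimensional, the right-hand side is $\HH_2(\g)^*\otimes\mathfrak{b}\cong\HH^2(\g)\otimes\mathfrak{b}$, so a possibly infinite-dimensional $\mathfrak{b}$ causes no harm. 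Writing $[\beta] = \sum_i[\Omega_i]\otimes b_i$ for suitable $b_i\in\mathfrak{b}$ and replacing $s$ by $s$ minus the relevant $\mathfrak{b}$-valued $1$-cochain, I may assume $\beta = \sum_i\Omega_i\, b_i$ on the nose. Then $\phi(x,c) \defeq s(x) + \psi(c)$, with $\psi\colon\fa\to\mathfrak{b}$ defined by $\psi(c_i) = b_i$, is readily checked to be a Lie homomorphism lifting $\id_\g$ and restricting to $\psi$ on $\fa$, which is the required commuting diagram.

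Finally, uniqueness follows from perfectness. If $\phi,\phi'$ both fit the diagram, then $\delta \defeq \phi-\phi'$ is linear with image in the central subspace $\mathfrak{b}$, so $\delta([u,v]) = [\delta u,\phi v] + [\phi' u,\delta v] = 0$ for all $u,v\in\widehat{\g}$; thus $\delta$ vanishes on $[\widehat{\g},\widehat{\g}] = \widehat{\g}$ and $\phi=\phi'$, whence also $\psi = \phi|_\fa$ is forced. The one subtlety worth flagging outside the perfectness argument is the appeal to finite-dimensionality of $\HH_2(\g)$, which is exactly what legitimises the tensor decomposition of $[\beta]$ for an arbitrary central kernel $\mathfrak{b}$.
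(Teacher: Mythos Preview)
Your proof is correct and self-contained, but it takes a genuinely different route from the paper's. The paper dispatches the proposition in one line by invoking \cite[Theorem 7.9.2]{Weibel}, which already produces a universal central extension of any perfect Lie algebra with kernel $H_2(\g)$, and then uses the duality $H_2(\g)^*\cong\HH^2(\g)$ from \cite[p.~16]{Fuks} to identify that abstract extension with the explicit one built from the cocycles $\Omega_1,\dots,\Omega_n$. In contrast, you verify the universal property of Definition~\ref{def: universal central extension} directly: you first prove that $\widehat{\g}$ is perfect (this is the substantive step, and your reduction to the rank-one lemma via a nonzero functional on $\fa/(\fa\cap[\widehat{\g},\widehat{\g}])$ is clean and uses exactly the linear independence of the $[\Omega_i]$), then construct the comparison map via universal coefficients over a field, and finally deduce uniqueness from perfectness. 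The paper's approach is quicker but opaque, leaning entirely on an external reference; yours is longer but exposes the mechanism, in particular making visible \emph{why} one needs the $[\Omega_i]$ to be linearly independent rather than merely spanning (perfectness of $\widehat{\g}$ would otherwise fail, and with it uniqueness). Both arguments ultimately rely on the same finite-dimensionality of $H_2(\g)$ and the duality with $\HH^2(\g)$, so the conceptual content is shared; the difference is that you reprove the relevant consequence of Weibel's theorem rather than quoting it.
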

\begin{proof}
    This follows immediately from \cite[Theorem 7.9.2]{Weibel} and the duality between $H_2(\g)$ and $H^2(\g)$ (see \cite[p.16]{Fuchs}).
\end{proof}

\subsection{Intermediate series modules of the Witt algebra}

We now introduce the Lie algebras of interest in this paper. 

\begin{defn}
\label{def: Witt algebra}
Let $\W$ be the \emph{Witt algebra}, with basis $\{L_n\}_{n \in \ZZ}$ and Lie bracket
$$[L_n,L_m] = (m - n)L_{n+m}.$$
The Witt algebra has a natural $\ZZ$-grading, with $\deg(L_n) = n$. It is the Lie algebra of derivations of the ring of Laurent polynomials. Under the identification $\W = \Der(\CC[t,t^{-1}]) = \CC[t,t^{-1}]\diff{t}$, we have $L_n = t^{n+1}\diff{t}$.
\end{defn}

In this paper, we study semi-direct sums of $\W$ with its intermediate series modules, defined below.

\begin{defn} \label{def: intermediate series module}
    An \emph{intermediate series module} of a $\ZZ$-graded Lie algebra $\g = \bigoplus_{n \in \ZZ} \g_n$ is a $\ZZ$-graded $\g$-module $\M = \bigoplus_{n \in \ZZ} \M_n$ such that $\dim \M_n = 1$ for all $n \in \ZZ$. 
\end{defn}

Indecomposable intermediate series modules of $\W$ were classified in \cite{KaplanskySantharoubane}: they consist of a family $\{I(a,b) \mid a,b \in \CC\}$, whose elements are known as \emph{tensor density modules}, and two families $\{A(\lambda) \mid \lambda \in \PP^1\}$ and $\{B(\lambda) \mid \lambda \in \PP^1\}$. An analogous classification of the intermediate series modules of \emph{generalised} (or \emph{higher rank}) \emph{Witt algebras} was obtained in \cite{Su}.

In this paper, we identify $\PP^1 = \CC \cup \{\infty\}$, where, for $a,b \in \CC$, the point $[a:b] \in \PP^1$ corresponds to $\frac{a}{b}$ if $b \neq 0$, or $\infty$ if $b = 0$. Fixing $a,b,\lambda \in \CC$ and letting $\{I_n\}_{n \in \ZZ}, \{A_n\}_{n \in \ZZ}, \{B_n\}_{n \in \ZZ}$ be bases for $I(a,b), A(\lambda), B(\lambda)$, respectively, the action of $\W$ on these modules is given by
\begin{equation}\label{eq:defining modules}
    \begin{gathered}
        L_n \cdot I_m = (a + bn +m)I_{n+m}, \\
        L_n \cdot A_m = (n + m + n(n + 1)\lambda \delta_m^0)A_{n+m}, \\
        L_n \cdot B_m = (m - n(n + 1)\lambda \delta_{n+m}^0)B_{n+m},
    \end{gathered}
\end{equation}
where $\delta_n^0$ is the Kronecker delta symbol. When $\lambda = \infty$, the $n(n + 1)\lambda$ term becomes $n^2$, in other words, the action of $\W$ on $A(\infty)$ and $B(\infty)$ is given by
\begin{align*}
    L_n \cdot A_m &= (n + m + n^2 \delta_m^0)A_{n+m}, \\
    L_n \cdot B_m &= (m - n^2 \delta_{n+m}^0)B_{n+m}.
\end{align*}
Note that $A(0) \cong I(0,1)$ and $B(0) \cong I(0,0)$, while $I(a,b) \cong I(a + k,b)$ for all $k \in \ZZ$ and $I(\frac{1}{2},0) \cong I(\frac{1}{2},1)$ \cite[Lemma 2.1]{GaoJiangPei}. For this reason, following the conventions of \cite{GaoJiangPei}, we always assume that $a = 0$ in $I(a,b)$ if $a \in \ZZ$.

Since the modules $A(\lambda)$ and $B(\lambda)$ seem much more mysterious than the tensor density modules, we now spend some time explaining how they arise, and describe their relationship to the other intermediate series modules. The tensor density modules can be viewed as
$$I(a,b) = t^{a - b}\CC[t,t^{-1}]dt^b.$$
Under this identification, the action of $\W$ on $I(a,b)$ is the following:
$$f\del \cdot (g \ dt^b) = (fg' + b f'g)dt^b.$$
Thus, $I(0,0)$ is $\CC[t,t^{-1}]$ with its usual action of $\W$ by derivations, $I(0,-1)$ is $\CC[t,t^{-1}]dt^{-1} \cong \W$ with the adjoint action of $\W$, and $I(0,1)$ is $\CC[t,t^{-1}]dt$, the space of 1-forms.

Recall that the tensor density modules $I(a,b)$ are irreducible for all $a,b \in \CC$, except for the functions $I(0,0)$ and 1-forms $I(0,1)$. These two modules are mutually dual with unique nontrivial proper submodules $\CC$ and $\widetilde{J} \coloneqq \spn\{t^n \ dt \mid n \in \ZZ \setminus \{-1\}\}$ respectively. Note that when we say ``dual'', we mean that $I(0,0)$ is the \emph{restricted dual} of $I(0,1)$, and vice-versa. The restricted dual of a graded $\W$-module $\M = \bigoplus_{n \in \ZZ} \M_n$ with $\dim \M_n < \infty$ for all $n \in \ZZ$ is the graded $\W$-module $\M' = \bigoplus_{n \in \ZZ}\M'_n$ with $\M'_n = \Hom_\CC(\M_{-n},\CC)$. Furthermore, the quotient $\widetilde{I} \coloneqq I(0,0)/\CC$ is irreducible, self-dual, and isomorphic to the nontrivial proper submodule $\widetilde{J}$.

This relationship between $I(0,0)$ and $I(0,1)$ can be explained by introducing the following natural $\W$-maps. First, we have the \emph{de Rham differential}
\begin{align*}
    d \colon I(a,0) &\to I(a,1) \\
    f &\mapsto df = f' \ dt.
\end{align*}
Using this map, we see that the action of $\W$ on $I(0,1)$ can be described as follows:
$$f\del \cdot (g \ dt) = (fg' + f'g)dt = (fg)' \ dt = d(fg).$$
When $a = 0$, it is clear that the kernel of $d$ consists of the constant functions $\CC$, and that the image of $d$ is $\widetilde{J}$, which yields the isomorphism $\widetilde{I} \cong \widetilde{J}$.

Another natural $\W$-map is the \emph{residue map}, defined by
\begin{align*}
    \Res \colon I(0,1) &\to \CC \\
    f \ dt &\mapsto \frac{1}{2\pi i}\oint_0 f \ dt. 
\end{align*}
More concretely, the residue map extracts the coefficient of $t^{-1}$ in the Laurent polynomial $f$. It is clear that $\ker(\Res) = \widetilde{J}$.

Additionally, we have the \emph{multiplication map}
\begin{align*}
    \cdot \ \colon I(a_1,b_1) \otimes I(a_2,b_2) &\to I(a_1 + a_2, b_1 + b_2) \\
    f \ dt^{b_1} \otimes g \ dt^{b_2} &\mapsto fg \ dt^{b_1 + b_2}.
\end{align*}
The self-duality of $\widetilde{I}$ is witnessed by the invariant skew-symmetric form
\begin{equation}\label{eq:self duality of I}
    \ang{f,g} \coloneqq \Res(f \cdot dg) = \oint_0 fg' \ dt,
\end{equation}
where an invariant form is defined below.

\begin{defn}
    Let $\g$ be a Lie algebra and $\M$ be a $\g$-module. A bilinear form $\theta \colon \M \times \M \to \CC$ is said to be \emph{invariant} if it satisfies
    $$\theta(x \cdot u,v) + \theta(u,x \cdot v) = 0$$
    for all $x \in \g$ and $u,v \in \M$.
\end{defn}

Similarly, we can see the duality between $I(0,b)$ and $I(0,1 - b)$ via the form
\begin{equation}\label{eq:B form}
    \mathcal{B}(f \ dt^b, g \ dt^{1 - b}) \coloneqq \Res(f \ dt^b \cdot g \ dt^{1 - b}) = \oint_0 fg \ dt.
\end{equation}
Finally, we have the \emph{Lie bracket}
\begin{align*}
    [-,-] \colon I(a,b_1) \otimes I(a,b_2) &\to I(a, b_1 + b_2 + 1) \\
    f \ dt^{b_1} \otimes g \ dt^{b_2} &\mapsto (-b_1fg' + b_2f'g)dt^{b_1 + b_2 + 1}.
\end{align*}
The multiplication map and the Lie bracket equip the space $\mathcal{F} \coloneqq \bigoplus_{b \in \ZZ} I(0,b)$ with a Poisson algebra structure. See \cite[Theorem 2.10]{Schlichenmaier:2012ika} for more details.

The modules $A(\lambda)$ and $B(\lambda)$ arise as follows: $\HH^1(\W; \widetilde{I})$ is two-dimensional. Noting that $\HH^1(\W; \widetilde{I}) \cong \Der(\W,\widetilde{I})/\Inn(\W,\widetilde{I})$, we can view an element of $\HH^1(\W; \widetilde{I})$ as a derivation of $\W$ with coefficients in $\widetilde{I}$. Given $\delta \in \Der(\W,\widetilde{I})$, we can form a new $\W$-module $\widetilde{I}_\delta$ as follows: as a vector space, $\widetilde{I}_\delta = \widetilde{I} \oplus \CC \delta$ with $\W$-action given by letting $\widetilde{I}$ be a submodule of $\widetilde{I}_\delta$, and defining
$$w \cdot \delta = -\delta(w),$$
where $w \in \W$. This yields a family of extensions of $\CC$ by $\widetilde{I}$, but not all of these are indecomposable. The indecomposable members of this family are the modules $A(\lambda)$, parametrised by $\PP^1$. We then get the dual family $B(\lambda)$ of indecomposable extensions of $\widetilde{I}$ by $\CC$ \cite[Remark 3.1]{SierraSpenko}.

Therefore, we may view $A(\lambda)$ and $B(\lambda)$ as the spaces of 1-forms $\CC[t,t^{-1}]dt$ and functions $\CC[t,t^{-1}]$ respectively, with deformed actions of $\W$. Under this perspective, we have $A_n = t^{n - 1} \ dt$ and $B_n = t^n$. For $\lambda \in \CC$, to define the cocycle giving rise to $A(\lambda)$ as an extension of $\CC$ by $\widetilde{I} \cong \widetilde{J}$, we introduce the following derivation
\begin{align*}
    \delta_1 \colon \W &\to \widetilde{J} \\
    f\del &\mapsto df' = f'' \ dt.
\end{align*}
We can then write the action of $\W$ on $A(\lambda) = \CC[t,t^{-1}]dt$ as
$$f\del \cdot (g \ dt) = \Big(fg' + f'g + \lambda \Res(g \ dt) f''\Big)dt = d(fg + \lambda \Res(g \ dt) f').$$
It is easy to check that the above action coincides with the definition of $A(\lambda)$ as a deformation of $I(0,1)$ given in \eqref{eq:defining modules}, provided $\lambda \neq \infty$. For the case $\lambda = \infty$, we must also use the derivation
\begin{align*}
    \delta_2 \colon \W &\to \widetilde{J} \\
    f\del &\mapsto d(t^{-1}f) = (t^{-1}f - t^{-2}f)dt.
\end{align*}
Then the action of $\W$ on $A(\infty)$ is
\begin{align*}
    f\del \cdot (g \ dt) &= \Big(fg' + f'g + \Res(g \ dt) (f'' - t^{-1}f' + t^{-2}f)\Big)dt \\
    &= d\Big(fg + \Res(g \ dt) (f' - t^{-1}f)\Big),
\end{align*}
which we get from the derivation $\delta_1 - \delta_2 \in \Der(\W,\widetilde{J})$. We can easily see that $\delta_1$ and $\delta_2$ are the compositions of the derivations
\begin{align*}
    \widetilde{\delta}_1 \colon \W &\to I(0,0), & \widetilde{\delta}_2 \colon \W &\to I(0,0) \\
    f\del &\mapsto f', & f\del &\mapsto t^{-1}f
\end{align*}
with the de Rham differential $d$. The classes of $\delta_1$ and $\delta_2$ span the space $\HH^1(\W;\widetilde{J}) \cong \HH^1(\W;\widetilde{I})$ (see Corollary \ref{cor:derivations W to I tilde}).

For $\lambda \in \CC$, the cocycle defining $B(\lambda)$ is dual to $\delta_1$, and the action of $\W$ on $B(\lambda) = \CC[t,t^{-1}]$ can be written as
$$f\del \cdot g = fg' + \lambda \ang{f',g},$$
where $\ang{-,-}$ is the skew-symmetric bilinear form defined in \eqref{eq:self duality of I}. For $\lambda = \infty$, we use the cocycle dual to $\delta_1 - \delta_2$, giving the formula
$$f\del \cdot g = fg' + \ang{f' - t^{-1}f,g}$$
for the action of $\W$ on $B(\infty)$.

By the discussion above, we see that the module $A(\lambda)$ has an irreducible submodule $\W \cdot A(\lambda) = \spn\{A_n \mid n \in \ZZ \nonzero\} \cong \widetilde{I}$ with a one-dimensional trivial quotient $A(\lambda)/(\W \cdot A(\lambda)) \cong \CC$, while $B(\lambda)$ has a one-dimensional trivial submodule $\CC B_0$ with an irreducible quotient $B(\lambda)/\CC B_0$ isomorphic to $\widetilde{I}$. This can be seen by noticing that the de Rham differential is still a $\W$-map under the deformed $\W$-actions on $\CC[t,t^{-1}]$ and $\CC[t,t^{-1}]dt$, as we prove next.

\begin{lemma}
    Let $\lambda, \mu \in \PP^1$. Then the de Rham differential $d \colon B(\lambda) \to A(\mu)$ is a homomorphism of $\W$-representations, where we view $A(\mu) = \CC[t,t^{-1}]dt$ and $B(\lambda) = \CC[t,t^{-1}]$ as vector spaces. The map $d$ induces an isomorphism $B(\lambda)/\CC B_0 \cong \W \cdot A(\mu)$.
\end{lemma}
\begin{proof}
    First, note that $d(B_n) = nA_n$ for all $n \in \ZZ$. Let $n,m \in \ZZ$ such that $n + m \neq 0$. Then
    $$\begin{gathered}
        d(L_n \cdot B_m) = md(B_{n+m}) = m(n + m)A_{n+m}, \\
        L_n \cdot d(B_m) = m L_n \cdot A_m = m(n + m + n(n + 1)\mu\delta_m^0)A_{n+m} = m(n + m)A_{n+m},
    \end{gathered}$$
    where we used that $m\delta_m^0 = 0$ in the last equality, so we see that $d(L_n \cdot B_m) = L_n \cdot d(B_m)$ whenever $n + m \neq 0$. We also have
    $$\begin{gathered}
        d(L_n \cdot B_{-n}) = -(n + n(n + 1)\lambda)d(B_0) = 0, \\
        L_n \cdot d(B_{-n}) = -n L_n \cdot A_{-n} = 0,
    \end{gathered}$$
    where we used that $L_n \cdot A_{-n} = 0$ in the last equality. Combining the above, we conclude that $d(L_n \cdot B_m) = L_n \cdot d(B_m)$ for all $n,m \in \ZZ$, so $d$ is a homomorphism of $\W$-modules, as required.

    For the final sentence, we simply note that $\ker(d) = \CC B_0$, while $\im(d) = \spn\{A_n \mid n \in \ZZ \nonzero\} = \W \cdot A(\mu)$.
\end{proof}

Semi-direct sums of $\W$ with its tensor density modules $I(a,b)$, which we denote
$$\W(a,b) \coloneqq \W \ltimes I(a,b),$$
have been studied in \cite{GaoJiangPei}. In particular, the authors computed (Leibniz) central extensions, derivations, and automorphisms of $\W(a,b)$. We summarise their results below.

\begin{theorem}[{\cite[Theorems 2.3, 3.5, 4.7, and 5.2]{GaoJiangPei}}]\label{thm:GJP}
    Let $a,b \in \CC$. The space of central extensions of $\W(a,b)$ has the following dimension:
    $$\dim(\HH^2(\W(a,b))) = \begin{cases}
        3, &\text{if } (a,b) = (0,0) \text{ or } (0,1), \\
        2, &\text{if } (a,b) = (0,-1) \text{ or } (\frac{1}{2},0), \\
        1, &\text{otherwise}.
    \end{cases}$$
    Furthermore, there are only two values of $(a,b) \in \CC^2$ for which $\W(a,b)$ has extra non-trivial Leibniz central extensions:
    $$\dim(\HL^2(\W(a,b))) = \begin{cases}
        \dim(\HH^2(\W(a,b))) + 1, &\text{if } (a,b) = (0,1) \text{ or } (a,b) = (0,2), \\
        \dim(\HH^2(\W(a,b))), &\text{otherwise}.
    \end{cases}$$
    The space of outer derivations of $\W(a,b)$ has the following dimension:
    $$\dim(\HH^1(\W(a,b);\W(a,b))) = \begin{cases}
        3, &\text{if } (a,b) = (0,0), \\
        2, &\text{if } (a,b) = (0,1) \text{ or } (0,2), \\
        1, &\text{otherwise}.
    \end{cases}$$
    Finally, the automorphism group of $\W(a,b)$ has the following structure:
    $$\Aut(\W(a,b)) \cong \begin{cases}
        \CC^\infty \rtimes (\CC^\times \times \CC^\times), &\text{if } a \notin \frac{1}{2}\ZZ, \\
        \CC^\infty \rtimes (\ZZ/2\ZZ \ltimes (\CC^\times \times \CC^\times)), &\text{otherwise}.
    \end{cases}$$
\end{theorem}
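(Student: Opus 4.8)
The unifying observation is that $\W(a,b) = \W \ltimes I(a,b)$ is \emph{internally graded}: the $\ZZ$-grading with $\deg L_n = \deg I_n = n$ is implemented by $\ad(L_0)$, so by the standard theory every cohomology class (and every derivation, up to inner ones) is represented by a homogeneous cochain of degree zero. The plan is therefore to reduce each of the four computations to a finite-dimensional problem in degree zero, solve the resulting linear recurrences in $n$, and pay close attention to the exceptional parameter values at which these recurrences degenerate — these will be exactly the loci $(0,0),(0,1),(0,-1),(\frac{1}{2},0),(0,2)$ appearing in the statement.

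For $\HH^2(\W(a,b))$ I would note that a degree-zero $2$-cocycle $\Omega$ is determined by the three scalar families $\Omega(L_n,L_{-n})$, $\Omega(L_n,I_{-n})$, $\Omega(I_n,I_{-n})$, since $\Omega$ vanishes on pairs whose degrees do not sum to zero. Writing out the cocycle identity $\Omega([x,y],z)+\Omega([y,z],x)+\Omega([z,x],y)=0$ on triples sorted by type, the $(L,L,L)$ triples reproduce the Gelfand--Fuks/Virasoro cocycle $\Omega_{\Vir}$ (so $\HH^2(\W)=\CC$), while the $(L,L,I)$ and $(L,I,I)$ triples yield linear recurrences whose generic solution is a coboundary but which gain extra solutions precisely when certain linear factors in $a$, $b$, $n$ vanish. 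Tracking those degeneracies produces the exceptional $(a,b)$; for each I would exhibit an explicit spanning cocycle and check it is not a coboundary, recovering the stated dimensions $3,2,1$.

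The Leibniz computation $\HL^2(\W(a,b))$ splits as the Lie part $\HH^2$ together with the contribution of \emph{symmetric} $2$-cocycles, which under the usual dictionary correspond to invariant symmetric bilinear forms on $\W(a,b)$ modulo the trivial ones. Solving the invariance equations again in degree zero, I expect nontrivial extra symmetric forms to survive only at $(0,1)$ and $(0,2)$, accounting for the $+1$ there. For $\HH^1(\W(a,b);\W(a,b))$ a degree-zero derivation $D$ has the form $L_n \mapsto \alpha_n L_n + \beta_n I_n$ and $I_m \mapsto \gamma_m L_m + \delta_m I_m$ (with the obvious care in degree $0$); imposing the Leibniz rule $D[x,y]=[Dx,y]+[x,Dy]$ on the three bracket types forces linear recurrences on $\alpha,\beta,\gamma,\delta$, and after subtracting the inner derivations $\ad(L_k),\ad(I_k)$ the surviving outer classes jump at the same special parameters, giving the dimensions $3,2,1$.

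For $\Aut(\W(a,b))$ I would first determine the graded automorphisms. Any automorphism preserves the abelian ideal $I(a,b)$ (which I would characterise intrinsically, e.g. as the unique maximal abelian ideal), hence descends to an automorphism of the simple quotient $\W = \W(a,b)/I(a,b)$. Since $\Aut(\W)$ is generated by the scalings $L_n \mapsto \mu^n L_n$ and the flip $\sigma\colon L_n \mapsto -L_{-n}$, I would lift these: the torus $\CC^\times \times \CC^\times$ comes from rescaling $t$ together with an independent rescaling of the module, the infinite unipotent factor $\CC^\infty$ arises by exponentiating the locally nilpotent inner derivations coming from $I(a,b)$, and the flip $\sigma$ survives to $\W(a,b)$ only when the module action $L_n \cdot I_m = (a+bn+m)I_{n+m}$ is symmetric under $n \mapsto -n$, which forces $a \in \frac{1}{2}\ZZ$ and yields the extra $\ZZ/2\ZZ$. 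The hard part — and the step I expect to be the main obstacle — is proving there are no further automorphisms: after normalising by the maps above one must show the remaining automorphism is the identity, which I anticipate will require a rigidity argument exploiting that $\W$ is generated in low degree and that the module structure constants rigidly determine how basis vectors can be permuted.
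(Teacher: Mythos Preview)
The paper does not prove this theorem: it is quoted from \cite{GaoJiangPei} as background (with a correction to the Leibniz case at $(0,1)$ flagged in Remark~\ref{rem:error}), so there is no proof here to compare your proposal against. Your outline does, however, match closely the methodology the paper employs for its own new results on $\W_A(\lambda)$ and $\W_B(\lambda)$: the reduction to degree-zero cochains via the internal grading (Theorem~\ref{thm:internal grading}), the splitting of $Z^2$ into Virasoro, abelian, and mixing pieces (Proposition~\ref{prop: 2-cocycle splitting}), and the exact sequence $\HH^2\hookrightarrow\HL^2\to\Inv\xrightarrow{h}\HH^3$ for the Leibniz computation (Proposition~\ref{prop:Leibniz InvForm exact sequence}) are all exactly the tools you invoke.

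Two points in your sketch deserve tightening. First, your criterion for when the flip $\sigma$ lifts to $\W(a,b)$ is not quite right: requiring the structure constants $(a+bn+m)$ to be ``symmetric under $n\mapsto-n$'' would force $a=0$. The correct condition is that the $\W$-module obtained by pulling $I(a,b)$ back along the flip, namely $I(-a,b)$, be isomorphic to $I(a,b)$; since $I(a,b)\cong I(a+k,b)$ for all $k\in\ZZ$, this holds precisely when $2a\in\ZZ$. Second, for $\HL^2$ the extra contribution beyond $\HH^2$ is not ``symmetric invariant forms modulo trivial ones'' but the image of $\phi$, equivalently $\ker h\subseteq\Inv(\W(a,b))$; in practice one computes $\Inv(\W(a,b))$ in degree zero and then checks which forms actually lift to Leibniz $2$-cocycles, as the paper does in Section~\ref{sec:Leibniz}.
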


\begin{remark}\label{rem:error}
    Note that the Leibniz cohomology result for $\W(0,1)$ in Theorem \ref{thm:GJP} is different to the one in \cite[Theorem 3.5]{GaoJiangPei}. In particular, they claim that $\dim(\HL^2(\W(0,1))) = \dim(\HH^2(\W(0,1))) + 2$. This is due to an error in \cite[Lemma 3.2]{GaoJiangPei}, which we correct in Section \ref{sec:Leibniz}.
\end{remark}

We focus on semi-direct sums of $\W$ with the intermediate series modules $A(\lambda)$ and $B(\lambda)$ for $\lambda \in \PP^1$, and prove analogous results to those of Theorem \ref{thm:GJP}. We denote the resulting semi-direct sums by
$$\W_A(\lambda) \coloneqq \W \ltimes A(\lambda), \quad \W_B(\lambda) \coloneqq \W \ltimes B(\lambda),$$
spanned by $\{L_n,A_n\}_{n \in \ZZ}$ and $\{L_n, B_n\}_{n \in \ZZ}$, respectively. For $n,m \in \ZZ$, the Lie brackets are given by
\begin{equation*}
    \begin{gathered}
        [L_n,L_m] = (m - n)L_{n+m} \\
        [L_n,A_m] = (n + m + n(n + 1)\lambda \delta_m^0)A_{n+m}, \quad [L_n,B_m] = (m - n(n + 1)\lambda \delta_{n+m}^0)B_{n+m} \\
        [A_n,A_m] = 0, \quad [B_n,B_m] = 0.
    \end{gathered}
\end{equation*}
Note that $\W_B(\lambda)$ is perfect (in other words, $[\W_B(\lambda),\W_B(\lambda)] = \W_B(\lambda)$), while $\W_A(\lambda)$ is not: we have 
$$[\W_A(\lambda),\W_A(\lambda)] = \spn\{L_n,A_m \mid n \in \ZZ, m \in \ZZ \nonzero\} = \W \ltimes (\W \cdot A(\lambda)) \cong \W \ltimes \widetilde{I}.$$
When discussing $\W_A(\lambda)$ and $\W_B(\lambda)$ simultaneously, we write $\W_X(\lambda)$, where $X = A$ or $B$. In such cases, we also write $X_n$ instead of $A_n$ or $B_n$, where $n \in \ZZ$. Furthermore, we define
$$\omega_{A(\lambda)}(n,m) = n + m + n(n + 1)\lambda \delta_m^0, \quad \omega_{B(\lambda)}(n,m) = m - n(n + 1)\lambda \delta_{n+m}^0.$$
Thus, when we discuss the brackets $[L_n,A_m]$ and $[L_n,B_m]$ simultaneously, we write
$$[L_n,X_m] = \omega_{X(\lambda)}(n,m)X_{n+m}.$$

\section{Central extensions}\label{sec:central extensions}

Our first goal is to compute $\HH^2(\W_X(\lambda))$ for $\lambda \in \PP^1$, which classifies central extensions of these Lie algebras. In other words, we want to look for non-trivial 2-cocycles on $\W_X(\lambda)$, which are antisymmetric bilinear maps $\W_X(\lambda) \times \W_X(\lambda)\rightarrow \mathbb{C}$.

Since $\W_B(\lambda)$ is perfect, it follows by \cite[Theorem 7.9.2]{Weibel} that $\W_B(\lambda)$ has a universal central extension, while $\W_A(\lambda)$ does not. Regardless, we can still classify one-dimensional central extensions of $\W_A(\lambda)$ by computing $\HH^2(\W_A(\lambda))$, although there is no universal one. 

The computation of 2-cocycles is simplified by the decomposition formula given in \cite[Proposition 1]{GaoLiuPei}. We present a more general version of this result here.

\begin{proposition} \label{prop: cohomology splitting}
    Let $\M$ be a $\g$-module over some Lie algebra $\g$. The cohomology groups of $\g\ltimes \M$, where $\M$ is endowed with an abelian Lie bracket, split as follows:
    \begin{equation}\label{eq:splitting of cohomology}
        \HH^n (\g\ltimes \M) \cong \bigoplus_{k + \ell = n} \HH^k(\g;\extp^{\ell}\M^*),
    \end{equation}
    where $\M^* = \Hom(\M, \CC)$ is the dual of $\M$.
\end{proposition}
\begin{proof}
    Given $\psi \in C^n(\g \ltimes \M)$ and $k,\ell \in \NN$ with $k + \ell = n$, we define $\psi_{k\ell} \in C^n(\g \ltimes \M)$ as follows:
    $$\psi_{k\ell}(x_1,\ldots,x_r,v_1,\ldots,v_s) \coloneqq \begin{cases}
        \psi(x_1,\ldots,x_k,v_1,\ldots,v_\ell), &\text{if } r = k, s = \ell, \\
        0, &\text{otherwise},
    \end{cases}$$
    where $r + s = n$, $x_i \in \g$, and $v_i \in \M$. Then
    $$\psi = \sum_{k + \ell = n} \psi_{k\ell}$$
    for all $\psi \in C^n(\g \ltimes \M)$. Define
    $$C^{k,\ell}(\g \ltimes \M) \coloneqq \{\psi_{k\ell} \mid \psi \in C^{k + \ell}(\g \ltimes \M)\}.$$
    It is clear that $C^{k,\ell}(\g \ltimes \M) \cong \Hom\left(\extp^{k}\g \otimes \extp^{\ell}\M, \CC\right)$. The space of cochains $C^n(\g\ltimes \M) = \Hom(\extp^{n}(\g\ltimes \M), \CC)$ splits as
    $$C^n(\g\ltimes \M) = \bigoplus_{k+\ell=n} C^{k,\ell}(\g\ltimes\M).$$
    Now we show that $d$ preserves $\ell$, leading to a subcomplex $(C^{\dotr,\ell}(\g \ltimes \M),d)$ for each $\ell \in \NN$. Using the fact that $[x_i,x_j] \in \g$, $[x_i,v_j] \in \M$, $[v_i,v_j] = 0$, the formula \eqref{eq: LA cohomology d} applied to $\psi_{k\ell}\in C^{k,\ell}(\g\ltimes\M)$ reduces to
    \begin{multline}\label{eq:dpsi}
        d\psi_{k\ell}(x_1,\ldots,x_r,v_1,\ldots,v_s) \\
        = \sum_{1\leq i<j\leq r} (-1)^{i+j}\psi_{k\ell}([x_i,x_j],x_1,\dots, \hat{x}_i, \dots, \hat{x}_j, \dots, x_r, v_1, \dots, v_s) \\
        + \sum_{i = 1}^{k + 1} \sum_{j = 1}^{\ell} (-1)^{i + j + r} \psi_{k\ell}([x_i,v_j],x_1,\dots,\hat{x}_i,\dots,x_r,v_1,\dots, \hat{v}_j,\dots,v_s),
    \end{multline}
    where $r + s = n + 1$, $x_i \in \g$, and $v_i \in \M$. In both of the summations in \eqref{eq:dpsi}, the number of entries in $\g$ is reduced by one while the number of entries in $\M$ is left the same. It follows by definition of $\psi_{k\ell}$ that \eqref{eq:dpsi} is non-trivial only if $r = k + 1$ and $s = \ell$. This proves that$d\psi_{k\ell} \in C^{k+1,\ell}(\g \ltimes \M)$, as claimed.
    
    We therefore define $\HH^{\dotr,\ell}(\g\ltimes\M)$ to be the cohomology of the cochain complex $(C^{\dotr,\ell}(\g\ltimes\M),d)$. It follows that
    $$\HH^n(\g \ltimes \M) \cong \bigoplus_{k + \ell = n} \HH^{k,\ell}(\g \ltimes \M).$$
    To prove the isomorphism in the statement of the proposition, we show that the cochain complexes $C^{\dotr,\ell}(\g \ltimes \M)$ and $C^{\dotr}(\g; \extp^\ell \M^*)$ are isomorphic, which will yield
    $$\HH^{k,\ell}(\g \ltimes \M) \cong \HH^k(\g; \extp^\ell \M^*).$$
    This isomorphism is constructed as follows:
    $$\Phi \colon C^{k,\ell}(\g \ltimes \M) \to C^k(\g;\extp^\ell \M^*),$$
    defined by $\Big(\Phi(\psi_{k\ell})(x_1,\dots,x_k)\Big)(v_1,\dots, v_\ell) = \psi_{k\ell}(x_1,\dots,x_k,v_1,\dots,v_\ell)$, where $\psi_{k\ell} \in C^{k,\ell}(\g \ltimes \M)$, $x_i \in \g$, and $v_i \in \M$. It is clear that $\Phi$ is bijective for all $k,\ell \in \NN$.
    
    We now show that $\Phi$ is a chain map. To that end, let $\psi_{k\ell} \in C^{k,\ell}(\g \ltimes \M)$. The definition of the differential in \eqref{eq: LA cohomology d} gives
    \begin{multline}\label{eq:dPhi}
        d\Phi(\psi_{k\ell})(x_1,\dots,x_{k+1}) = \sum_{i = 1}^{k + 1} (-1)^{i+1} x_i \cdot \Phi(\psi_{k\ell})(x_1, \dots, \hat{x}_i, \dots, x_{k+1}) \\
        +\sum_{1\leq i < j \leq k+1} (-1)^{i+j} \Phi(\psi_{k\ell}) ([x_i, x_j], x_1, \dots, \hat{x}_i, \dots, \hat{x}_j, \dots, x_{k+1}).
    \end{multline}
    We briefly remark that one should, strictly speaking, use a different symbol for the differential in \eqref{eq:dPhi}, since it is meant to refer to the differential of the cochain complex whose cochains are given by $C^{\dotr}(\g;\extp^{\ell}\M^*)$. However, since it is constructed canonically from the one defined in \eqref{eq: LA cohomology d}, we abuse notation by using the same symbol, thereby also avoiding superfluous notation.
    On the other hand, using \eqref{eq:dpsi}, we get
    \begin{multline*}
        \Big(\Phi(d\psi_{k\ell})(x_1,\dots,x_{k+1})\Big)(v_1,\dots,v_\ell) = d\psi_{k\ell} (x_1,\dots,x_{k+1},v_1,\dots,v_\ell)\\
        = \sum_{1\leq i<j\leq k+1} (-1)^{i+j}\psi_{k\ell}([x_i,x_j],x_1,\dots, \hat{x}_i, \dots, \hat{x}_j, \dots, x_{k+1}, v_1, \dots, v_\ell) \\
        + \sum_{i = 1}^{k + 1} \sum_{j = 1}^{\ell} (-1)^{i+j+k+1}\psi_{k\ell}([x_i,v_j],x_1,\dots,\hat{x}_i,\dots,x_{k + 1},v_1,\dots, \hat{v}_j,\dots,v_\ell).
    \end{multline*}
    In the second summation, we can move the $[x_i,v_j]$ term to the right as follows:
    \begin{multline*}
        \sum_{i = 1}^{k + 1} \sum_{j = 1}^{\ell} (-1)^{i+j+k+1}\psi_{k\ell}([x_i,v_j],x_1,\dots,\hat{x}_i,\dots,x_{k + 1},v_1,\dots, \hat{v}_j,\dots,v_\ell) \\
        = \sum_{i = 1}^{k + 1} \sum_{j = 1}^{\ell} (-1)^{i}\psi_{k\ell}(x_1,\dots,\hat{x}_i,\dots,x_{k+1},v_1,\dots,v_{j-1}, [x_i,v_j],v_{j+1},\dots,v_\ell).
    \end{multline*}
    By definition of $\Phi$, we get
    \begin{multline*}
        \Big(\Phi(d\psi_{k\ell})(x_1,\dots,x_{k+1})\Big)(v_1,\dots,v_\ell) \\
        = \sum_{1\leq i<j\leq k+1} (-1)^{i+j}\Big(\Phi(\psi_{k\ell})([x_i,x_j],x_1\dots \hat{x}_i, \dots, \hat{x}_j, \dots, x_{k+1})\Big)(v_1, \dots, v_\ell) \\
        +\sum_{i = 1}^{k + 1} \sum_{j = 1}^{\ell} (-1)^{i}\Big(\Phi(\psi_{k\ell})(x_1,\dots,\hat{x}_i,\dots,x_{k+1})\Big)(v_1,\dots,v_{j-1}, [x_i,v_j],v_{j+1},\dots,v_\ell).
    \end{multline*}
    The definition of the action of $\g$ on $\Hom(\extp^\ell \M, \CC)$ now gives
    \begin{multline*}
        \Big(\Phi(d\psi_{k\ell})(x_1,\dots,x_{k+1})\Big)(v_1,\dots,v_\ell) \\
        = \sum_{i = 1}^{k + 1} (-1)^{i+j}\Big(\Phi(\psi_{k\ell})([x_i,x_j],x_1\dots \hat{x}_i, \dots, \hat{x}_j, \dots, x_{k+1})\Big)(v_1, \dots, v_\ell) \\
        + \sum_{i = 1}^{k + 1} (-1)^{i+1}\Big(x_i \cdot \Phi(\psi_{k\ell})(x_1,\dots,\hat{x}_i,\dots,x_{k+1})\Big)(v_1,\dots,v_\ell).
    \end{multline*}
    We conclude that
    \begin{multline}\label{eq:Phi d}
        \Phi(d\psi_{k\ell})(x_1,\dots,x_{k+1}) = \sum_{i = 1}^{k + 1} (-1)^{i+j}\Phi(\psi_{k\ell})([x_i,x_j],x_1\dots \hat{x}_i, \dots, \hat{x}_j, \dots, x_{k+1}) \\
        + \sum_{i = 1}^{k + 1} (-1)^{i+1}x_i \cdot \Phi(\psi_{k\ell})(x_1,\dots,\hat{x}_i,\dots,x_{k+1}).
    \end{multline}
    Combining \eqref{eq:dPhi} and \eqref{eq:Phi d}, we see that $\Phi(d\psi_{k\ell}) = d\Phi(\psi_{k\ell})$, which proves that $\Phi$ is a chain map. Since $\Phi$ is bijective for all $k,\ell \in \NN$, we conclude that $\Phi$ is an isomorphism of cochain complexes
    $$C^{\dotr,\ell}(\g\ltimes\M) \overset{\Phi}{\cong} C^{\dotr}(\g;\extp^{\ell}\M^*)$$
    for all $\ell \in \NN$. It follows that $\HH^{k,\ell}(\g \ltimes \M) \cong \HH^k(\g; \extp^\ell \M^*)$ for all $k,\ell \in \NN$, as claimed, which concludes the proof.
\end{proof}

As an immediate consequence of Proposition \ref{prop: cohomology splitting} in the case $n = 2$, we recover \cite[Proposition 1]{GaoLiuPei}.

\begin{corollary}[{\cite[Proposition 1]{GaoLiuPei}}] \label{cor:H2 splitting}
    Let $\g$ be a Lie algebra and $\M$ be a $\g$-module. Then
    \begin{equation}
        \HH^2(\g \ltimes \M) \cong \HH^2(\g) \oplus \HH^1(\g;\M^*)\oplus \B_\g (\M),
    \end{equation}
    where $\B_\g (\M)$ is the space of $\g$-invariant skew-symmetric bilinear forms on $\M$.\qed
\end{corollary}
\begin{remark}
    Proposition \ref{prop: cohomology splitting} does not hold when $\M$ is endowed with a non-abelian Lie algebra structure. This failure is evidently seen when considering the case $n=2$ in the following situation: if $[w,w']$ were non-zero for some $w,w' \in \M$, $\Omega\in\extp^2(\g \ltimes \M)^*$ is a 2-cocycle only if
    $$\Omega(w,[w',v]) + \Omega(w',[v,w]) + \Omega(v, [w,w']) = 0$$
    for all $v \in \g$. In particular, $\restr{\Omega}{\extp^2 \M} \notin \B_\g(\M)$ in general. Consequently, the full cohomology space does not split into direct sums of vector spaces described by the proposition.
    In fact, in \cite[Proposition 1]{GaoLiuPei} and its proof, it is implicitly assumed that the module $V$ has an abelian Lie algebra structure. This result was also used in \cite{OvsienkoRoger}.
\end{remark}

To formulate the cohomology splitting given by Corollary \ref{cor:H2 splitting} in terms of the generators of $\W_X(\lambda)$, we make the following definition.

\begin{defn}
    A 2-cocycle $\Omega$ on $\W_X(\lambda)$ is called
    \begin{itemize}
        \item a \emph{Virasoro} cocycle if $\Omega (L_n, X_m) = \Omega (X_n, X_m) = 0$;
        \item an \emph{abelian} cocycle if $\Omega (L_n, L_m) = \Omega (L_n, X_m) = 0$;
        \item a \emph{mixing} cocycle if $\Omega(L_n, L_m) = \Omega (X_n, X_m) = 0$;
    \end{itemize}
    for all $n,m \in \ZZ$.
\end{defn}

We can now make the following identifications with the decomposition in Corollary \ref{cor:H2 splitting}:
\begin{align*}
    \HH_{\Vir}^2(\W_X(\lambda)) &\cong \HH^2(\W), \\
    \HH_{\Mix}^2(\W_X(\lambda)) &\cong \HH^1(\W;X(\lambda)'), \\
    \HH_{\Ab}^2(\W_X(\lambda)) &\cong \B_{\W}(X(\lambda)),
\end{align*}
where $\HH_{\Vir}^2(\W_X(\lambda))$, $\HH_{\Ab}^2(\W_X(\lambda))$ and $\HH_{\Mix}^2(\W_X(\lambda))$ denote the images of the spaces of Virasoro, abelian and mixing cocycles in $\HH^2(\W_X(\lambda))$, respectively. Note that the second isomorphism simplifies the computation of outer derivations of $\W_X(\lambda)$. We elaborate on this in Section \ref{sec:derivations}.

We therefore arrive at the following decomposition of $\HH^2(\W_X(\lambda))$ which we use throughout this and the next two sections:
\begin{equation}\label{eq: H^2 vector space splitting}
  \HH^2(\W_X(\lambda)) = \HH_{\Vir}^2(\W_X(\lambda)) \oplus \HH_{\Ab}^2(\W_X(\lambda)) \oplus \HH_{\Mix}^2(\W_X(\lambda)).  
\end{equation}
Proposition \ref{prop: cohomology splitting} implies that the Gelfand--Fuchs cocycle
\begin{equation}\label{eq:Virasoro cocycle}
    \Omega_{\Vir} (L_n, L_m) = \frac{1}{12}n(n^2-1)\delta^0_{m+n}.
\end{equation}
can always be pulled back to a non-trivial cocycle on $\W_X(\lambda)$. In terms of Laurent polynomials, the Gelfand--Fuchs cocycle has the following form: given $f,g \in \CC[t,t^{-1}]$, we have
$$\Omega_{\Vir}(f\del,g\del) = \frac{1}{12}\ang{f',g'} = \frac{1}{12}\Res(f'g'' \ dt).$$
It is a well-known fact that the Gelfand--Fuchs cocycle produces the Virasoro algebra $\Vir$, the universal central extension of the Witt algebra \cite{Schottenloher}. Since $X(\lambda)$ is an ideal in $\W_X(\lambda)$, it follows that the Gelfand--Fuchs cocycle extends to the unique cohomologically non-trivial Virasoro cocycle on $\W_X(\lambda)$. Thus, $\dim(\HH_{\Vir}^2(\W_X(\lambda))) = 1$, so $\dim(\HH^2 (\W_X(\lambda))) \geq 1$ for all $X \in \{A,B\}$ and $\lambda\in\PP^1$.

The main result of this section is the computation of central extensions of $\W_X(\lambda)$ for $X \in \{A,B\}$ and $\lambda \in \PP^1$.

\begin{theorem}\label{thm:main}
    Let $\lambda \in \PP^1$. Then
    \begin{align*}
        \dim(\HH^2(\W_A(\lambda))) &= \begin{cases}
            3, &\text{if } \lambda = 0,\\
            2, &\text{if } \lambda \neq 0,
        \end{cases}\\
        \dim(\HH^2(\W_B(\lambda))) &= 3.
    \end{align*}
    More explicitly, for $X = A$ and $\lambda \in \PP^1$ define $\Omega_{\Mix}^A \in Z^2(\W_A(\lambda))$, for $X = A$ and $\lambda = 0$ define $\Omega_0^A \in Z^2(\W_A(0))$, and for $X = B$ and $\lambda \in \PP^1$ define $\Omega_{\Ab}^B, \Omega_{\Mix}^B \in Z^2(\W_B(\lambda))$ as follows, where we specify only the non-zero components of the cocycles:
    \begin{align*}
        \Omega_0^A (L_n, A_m) &= n\delta^0_{n+m}; & \Omega_{\Mix}^A (L_n, A_m) &= \begin{cases}
            (\lambda + 1)\delta_m^0, &\text{if } n = 0 \text{ and } \lambda \neq \infty, \\
            \delta_{n+m}^0, &\text{otherwise};
        \end{cases} \\
        \Omega_{\Ab}^B (B_n, B_m) &= n\delta^0_{n+m}; & \Omega_{\Mix}^B(L_n, B_m) &= \begin{cases}
            n\delta^0_{n+m}, &\text{if } \lambda \neq 0, \\
            n(n + 1)\delta^0_{n+m}, &\text{if } \lambda = 0;
        \end{cases}
    \end{align*}
    for all $n,m \in \ZZ$. The spaces of one-dimensional central extensions are then given by
    \begin{align*}
        \HH^2(\W_A(\lambda)) &=
        \begin{cases}
            \CC \overline{\Omega}_{\Vir} \oplus \CC \overline{\Omega}_0^A \oplus \CC \overline{\Omega}_{\Mix}^A, &\text{if } \lambda=0.\\
            \CC \overline{\Omega}_{\Vir} \oplus \CC \overline{\Omega}_{\Mix}^A, &\text{if } \lambda \neq 0.
        \end{cases} \\
        \HH^2(\W_B(\lambda)) &= \CC \overline{\Omega}_{\Vir} \oplus \CC \overline{\Omega}_{\Ab}^B \oplus \CC \overline{\Omega}_{\Mix}^B,
    \end{align*}
    where $\overline{\Omega}$ denotes the image of the 2-cocycle $\Omega \in Z^2(\W_X(\lambda))$ in the cohomology space $\HH^2(\W_X(\lambda))$. Here, $\Omega_{\Vir}$ is the Gelfand-Fuchs cocycle in \eqref{eq:Virasoro cocycle}.
\end{theorem}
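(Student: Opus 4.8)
The plan is to follow the decomposition $\HH^2(\W_X(\lambda)) = \HH_{\Vir}^2(\W_X(\lambda)) \oplus \HH_{\Ab}^2(\W_X(\lambda)) \oplus \HH_{\Mix}^2(\W_X(\lambda))$ from \eqref{eq: H^2 vector space splitting}, which is legitimate by Proposition \ref{prop: 2-cocycle splitting}, and to compute each summand separately. The Virasoro summand is already pinned down: it is one-dimensional and spanned by the pullback of the Gelfand--Fuks cocycle \eqref{eq:Virasoro cocycle}, as discussed above, so the real content lies in the abelian and mixing summands. Throughout I would exploit the fact that $\W_X(\lambda)$ is internally graded (Definition \ref{def:internally graded}) via the grading element $L_0$, so that, as recalled in the introduction, every cohomology class is represented by a cocycle of degree zero. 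Concretely, a degree-zero 2-cocycle $\Omega$ vanishes on any pair $Y_n, Z_m$ with $n + m \neq 0$, so each cocycle type is completely encoded by a single function of one integer variable: an abelian cocycle by $\phi(n) = \Omega(X_n, X_{-n})$, and a mixing cocycle by $g(n) = \Omega(L_n, X_{-n})$. This is the key simplification that makes the whole computation tractable.

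For the abelian summand, antisymmetry forces $\phi$ to be odd, and evaluating the cocycle identity on triples $(L_k, X_n, X_m)$ (using $[X,X]=0$ and that abelian cocycles vanish on $L \times X$) collapses, after imposing $k+n+m=0$, to the single relation $-\omega_{X(\lambda)}(k,m)\phi(n) + \omega_{X(\lambda)}(k,n)\phi(m) = 0$. For $X = B$ this reads $n\phi(m) = m\phi(n)$ in the generic range, giving $\phi(n) = cn$ and hence the cocycle $\Omega_{\Ab}^B$; for $X = A$ the analogous relation, together with the constraint coming from the $m = 0$ specialisation, forces $\phi \equiv 0$. Since $X(\lambda)$ is abelian there are no abelian coboundaries, so $\HH_{\Ab}^2(\W_B(\lambda)) = \CC \overline{\Omega}_{\Ab}^B$ is one-dimensional while $\HH_{\Ab}^2(\W_A(\lambda)) = 0$.

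For the mixing summand, evaluating the cocycle identity on triples $(L_k, L_n, X_m)$ and imposing $k+n+m=0$ produces a functional equation relating $g(k)$, $g(n)$, and $g(k+n)$, whose precise form depends on the Kronecker-delta corrections in $\omega_{X(\lambda)}$ and hence splits into a generic equation (when $k,n,k+n \neq 0$) and a boundary equation (the $m=0$ specialisation). I would first solve the generic equation: for $X = A$ it is satisfied exactly by the affine functions $g(k) = \alpha + \beta k$, and for $X = B$ exactly by $g(k) = \beta k + \gamma k^2$; in both cases I would argue via the one-step recursion obtained by fixing $k=1$ that these are the only solutions on the nonzero integers, while $g(0)$ remains a priori free. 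The boundary equation then couples $g(0)$ to the other parameters: for $X = A$ it yields $g(0) = \alpha(1+\lambda)$ and additionally forces $\beta = 0$ precisely when $\lambda \neq 0$, while for $X = B$ it forces $g(0) = 0$. Finally I would compute the mixing coboundaries $d\psi(L_n, X_{-n}) = -\omega_{X(\lambda)}(n,-n)\psi(X_0)$: since $\omega_{A(\lambda)}(n,-n) = 0$ identically there are no mixing coboundaries for $X = A$, whereas for $X = B$ the coboundaries span the one-dimensional family $g(n) = c\,n(1 + (n+1)\lambda)$, cutting the two-dimensional cocycle space down by one. This gives the mixing dimensions $2$ for $\W_A(0)$, $1$ for $\W_A(\lambda)$ with $\lambda \neq 0$, and $1$ for $\W_B(\lambda)$, and identifies the surviving classes with $\Omega_0^A, \Omega_{\Mix}^A$ and $\Omega_{\Mix}^B$.

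The main obstacle I anticipate is the careful bookkeeping of the degree-zero component $g(0)$ in the mixing computation: it decouples from the generic linear or quadratic solution and is governed entirely by the boundary equation, and it is exactly here that the dependence on $\lambda$, and the distinguished behaviour at $\lambda = 0$, enters. A secondary point requiring care is justifying rigorously that the generic functional equations admit no solutions beyond the low-degree polynomials, despite the discreteness of $\ZZ$. Summing the contributions of the three summands then yields the stated dimensions, and reading off the explicit generators gives the claimed direct-sum descriptions of $\HH^2(\W_A(\lambda))$ and $\HH^2(\W_B(\lambda))$. The case $\lambda = \infty$ is handled identically after replacing $n(n+1)\lambda$ by $n^2$ throughout, where the same computation reproduces the same dimensions.
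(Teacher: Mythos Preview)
Your proposal is correct and follows essentially the same approach as the paper: both use the Vir/Ab/Mix decomposition of Proposition~\ref{prop: 2-cocycle splitting}, reduce to degree-zero cocycles via the internal grading (Theorem~\ref{thm:internal grading}), encode abelian and mixing cocycles by single functions $\phi,g\colon\ZZ\to\CC$, and then solve the resulting functional equations case by case, obtaining exactly the same generators and dimensions. The only cosmetic difference is that the paper outsources the $\W_A(0)$ mixing case to \cite[Theorem 2.3]{GaoJiangPei} rather than handling it directly as you do, and packages the functional-equation solutions into separately stated propositions (Propositions~\ref{prop:W_A has no abelian cocycles}, \ref{prop:W_B abelian cocycles}, \ref{prop:W_A mixing cocycles}, \ref{prop:W_B mixing cocycles}) before assembling them.
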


\begin{remark}\label{rem:basis-free cocycles}
    Under the identification $B(\lambda) = \CC[t,t^{-1}]$ mentioned in Section \ref{sec:preliminaries}, the cocycle $\Omega_{\Ab}^B$ is the invariant skew-symmetric form $\ang{-,-}$ defined in \eqref{eq:self duality of I}. Specifically, letting $f,g \in \CC[t,t^{-1}] = B(\lambda)$, we have
    $$\Omega_{\Ab}^B(f,g) = \ang{g,f} = \Res(f'g \ dt).$$
    On the other hand, the form $\ang{-,-}$ on $\widetilde{I}$ does not extend to $A(\lambda)$.

    The cocycle $\Omega_{\Mix}^B$ is given by
    $$\Omega_{\Mix}^B(f\del,g) = \begin{cases}
        \ang{g,t^{-1}f}, &\text{if } \lambda \neq 0, \\
        \ang{g,f'}, &\text{if } \lambda = 0.
    \end{cases}$$
    Similarly, using the identification $A(\lambda) = \CC[t,t^{-1}]$, we get a description of the cocycles $\Omega_0^A$ and $\Omega_{\Mix}^A$ in terms of polynomials. We have
    \begin{align*}
        \Omega_0^A(f\del, g \ dt) &= \Res((f' - t^{-1}f)g \ dt), \\
        \Omega_{\Mix}^A(f\del, g \ dt) &= \begin{cases}
            \Res(t^{-1}fg \ dt) + \lambda \ang{t^{-1},f}\Res(g \ dt), &\text{if } \lambda \neq \infty, \\
            \Res(t^{-1}fg \ dt), &\text{if } \lambda = \infty.
        \end{cases}
    \end{align*}
    Using the form $\mathcal{B}$ defined in \eqref{eq:B form}, we can also write the above as
    \begin{align*}
        \Omega_0^A(f\del,g \ dt) &= \mathcal{B}(f' - t^{-1}f, g \ dt) \\
        \Omega_{\Mix}^A(f\del, g \ dt) &= \begin{cases}
            \mathcal{B}(t^{-1}f + \lambda \ang{t^{-1},f}, g \ dt), &\text{if } \lambda \neq \infty, \\
            \mathcal{B}(t^{-1}f, g \ dt), &\text{if } \lambda = \infty,
        \end{cases} \\
        \Omega_{\Mix}^B(f\del, g) &= \begin{cases}
            \mathcal{B}(g, d(t^{-1}f)), &\text{if } \lambda \neq 0, \\
            \mathcal{B}(g, d(f')), &\text{if } \lambda = 0.
        \end{cases}
    \end{align*}
\end{remark}

Theorem \ref{thm:main} implies that we can define central extensions of the Lie algebras $\W_X(\lambda)$ as follows: as vector spaces, let $\Vir_A(\lambda) = \W_A(\lambda) \oplus \CC c_{\Vir} \oplus \CC c_{\Mix}^A$ and $\Vir_B(\lambda) = \W_B(\lambda) \oplus \CC c_{\Vir} \oplus \CC c_{\Ab}^B \oplus \CC c_{\Mix}^B$, where $\lambda \in \PP^1 \nonzero$. The Lie brackets are then given by
$$\begin{gathered}
    [L_n,L_m] = (m - n)L_{n+m} + \frac{1}{12}n(n^2 - 1)\delta_{n+m}^0 c_{\Vir}, \\
    [L_n,A_m] = \begin{cases}
        m A_m + (\lambda + 1)\delta_m^0 c_{\Mix}^A, &\text{if } n = 0 \text{ and } \lambda \neq \infty, \\
        (n + m + n(n + 1)\lambda \delta_m^0)A_{n+m} + \delta_{n+m}^0 c_{\Mix}^A, &\text{otherwise},
    \end{cases} \\
    [A_n,A_m] = 0, \\
    [L_n,B_m] = (m - n(n + 1)\lambda \delta_{n+m}^0)B_{n+m} + n\delta_{n+m}^0 c_{\Mix}^B, \\
    [B_n,B_m] = n\delta_{n+m}^0 c_{\Ab}^B,
\end{gathered}$$
with $c_{\Vir}, c_{\Mix}^A, c_{\Mix}^B, c_{\Ab}^B$ central. The central extension $\Vir_B(0)$ of $\W_B(0)$ is defined similarly, but the bracket $[L_n,B_m]$ becomes $[L_n,B_m] = m B_{n+m} + n^2\delta_{n+m}^0 c_{\Mix}^B$. The central extension $\Vir_B(\lambda)$ of $\W_B(\lambda)$ is universal for all $\lambda \in \PP^1$, while $\Vir_A(\lambda)$ is not universal.

\begin{remark}
    Let $\lambda \in \PP^1$ and let $\widehat{B}(\lambda) = B(\lambda) \oplus \CC c_{\Ab}^B$. As a Lie algebra, $\widehat{B}(\lambda)$ is the infinite-dimensional Heisenberg algebra $\mathfrak{H}$. It follows that the one-dimensional central extension
    $$\W_B(\lambda) \oplus \CC c_{\Ab}^B \cong \W \ltimes \widehat{B}(\lambda) \cong \W \lsemidirect{\lambda} \mathfrak{H}$$
    is a semi-direct sum of the Witt algebra with the Heisenberg algebra, where the symbol $\lsemidirect{\lambda}$ emphasises the dependence of the $\W$-module structure of $\mathfrak{H}$ on the parameter $\lambda$.

    Similarly, the two-dimensional central extension
    $$\W_B(\lambda) \oplus \CC c_{\Vir} \oplus \CC c_{\Ab}^B \cong \Vir \ltimes \widehat{B}(\lambda) \cong \Vir \lsemidirect{\lambda} \mathfrak{H}$$
    is a semi-direct sum of the Virasoro algebra with the Heisenberg algebra.
\end{remark}

To prove Theorem \ref{thm:main}, we proceed with the search for abelian and mixing cocycles on $\W_X(\lambda)$. First, we require a definition which will allow us to greatly simplify our computations.

\begin{defn}\label{def:internally graded}
    Let $\g$ be a Lie algebra. We say that $\g$ is \emph{internally graded} if $\g$ has an element $x_0$ and a basis consisting of eigenvectors for $\ad_{x_0}$. The Lie algebra $\g$ is then naturally graded as follows:
    $$\g_\lambda = \{x \in \g \mid [x_0,x] = \lambda x\},$$
    where $\lambda \in \CC$. 
    
    If $\g$ is internally graded, we say that a $\g$-module $\M$ is \emph{internally graded} if $\M$ has a basis consisting of eigenvectors for the action of $x_0$. Then $\M$ is naturally graded as follows:
    $$\M_\lambda = \{z \in \M \mid x_0 \cdot z = \lambda z\},$$
    where $\lambda \in \CC$.
\end{defn}

Certainly, the Lie algebras $\W_X(\lambda)$ are internally $\ZZ$-graded, with homogeneous components given by the eigenspaces of $L_0$: we have 
$$\W_X(\lambda)_n = \CC L_n + \CC X_n$$
for all $n \in \ZZ$, since $L_n$ and $X_n$ are eigenvectors of $L_0$ with eigenvalue $n$. Throughout this section, we will exploit this internal grading of $\W_X(\lambda)$ to make our search space significantly smaller.

\begin{defn}
    Let $\g = \bigoplus_{n \in \ZZ} \g_n$ be a $\ZZ$-graded Lie algebra and $\M = \bigoplus_{n \in \ZZ} \M_n$ be a $\ZZ$-graded $\g$-module. For $p \in \ZZ$, the \emph{degree 0 $p$-cochains of $\g$ with values in $\M$} are defined as follows:
    $$C_0^p(\g;\M) = \left\{\varphi \in C^p(\g;\M) \mid \varphi(x_1,\ldots,x_p) \in \M_{\deg(x_1) + \ldots + \deg(x_p)}, \text{ where } x_i \in \g \text{ are homogeneous}\right\}.$$
    One can easily verify that $d(C_0^p(\g;\M)) \subseteq C_0^{p+1}(\g;\M)$ for all $p \in \ZZ$. Thus, we define the \emph{degree 0 cohomology of $\g$} as the cohomology of the cochain complex $(C_0^*(\g;\M),d)$, which we denote $\HH_0^*(\g;\M)$. As usual, we write $C_0^*(\g)$ and $\HH_0^*(\g)$ instead of $C_0^*(\g;\CC)$ and $\HH_0^*(\g;\CC)$.
\end{defn}

In fact, when $\g$ is internally graded, the degree 0 cohomology of $\g$ is exactly the same as the usual cohomology of $\g$.

\begin{theorem}[{\cite[Theorem 1.5.2]{Fuchs}}]\label{thm:internal grading}
    Let $\g$ be an internally graded Lie algebra and let $\M$ be an internally graded $\g$-module. Then the inclusion $C_0^*(\g;\M) \to C^*(\g;\M)$ induces an isomorphism in cohomology $\HH_0^*(\g;\M) \cong \HH^*(\g;\M)$.
\end{theorem}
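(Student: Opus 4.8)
The plan is to identify the degree-zero subcomplex $C_0^*(\g;\M)$ with the kernel of a \emph{weight operator} acting on the full Chevalley--Eilenberg complex, and then to kill all the nonzero weight spaces by a single explicit contracting homotopy. This is the standard mechanism behind the vanishing of cohomology in positive weight for internally graded Lie algebras, and I would organise it around Cartan's magic formula.

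First I would introduce two operators on cochains. The interior product $\iota_{x_0}\colon C^p(\g;\M)\to C^{p-1}(\g;\M)$ is defined by $(\iota_{x_0}\varphi)(x_1,\ldots,x_{p-1}) = \varphi(x_0,x_1,\ldots,x_{p-1})$, and the Lie derivative $\mathcal{L}_{x_0} := d\iota_{x_0} + \iota_{x_0}d$. A direct expansion of the Chevalley--Eilenberg differential \eqref{eq: LA cohomology d} shows that $\mathcal{L}_{x_0}$ coincides with the natural action of $x_0\in\g$ on $C^p(\g;\M)\cong \extp^p\g^* \medotimes \M$ induced by the coadjoint action on $\g^*$ and $\rho$ on $\M$; in particular it commutes with $d$, so each of its eigenspaces is a subcomplex. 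I would then compute $\mathcal{L}_{x_0}$ on a homogeneous cochain: if $\varphi(x_1,\ldots,x_p)\in\M_{n_1+\cdots+n_p+e}$ whenever $x_i\in\g_{n_i}$ (call this a cochain of degree $e$), then using $[x_0,x_i]=n_ix_i$ and $x_0\cdot\M_k = k\M_k$ one finds $\mathcal{L}_{x_0}\varphi = e\varphi$. Hence the eigenvalue-$e$ eigenspace is exactly the space of degree-$e$ cochains, and the eigenvalue-zero eigenspace is precisely $C_0^p(\g;\M)$. Because $\M=\bigoplus_n\M_n$ is an honest direct sum, every value $\varphi(x_1,\ldots,x_p)$ has only finitely many nonzero homogeneous components, so each cochain splits (locally finitely) as $\varphi=\sum_e\varphi_e$ into homogeneous pieces, and $d$ respects this splitting.

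The heart of the argument is that for $e\neq 0$ the degree-$e$ subcomplex is acyclic. On it $\mathcal{L}_{x_0}$ acts as the invertible scalar $e$, and since $\deg x_0 = 0$ the map $\iota_{x_0}$ preserves degree; therefore $h:=\tfrac{1}{e}\iota_{x_0}$ maps the degree-$e$ complex to itself and satisfies $dh + hd = \tfrac{1}{e}\mathcal{L}_{x_0} = \id$. Thus $h$ is a contracting homotopy and the cohomology of the degree-$e$ subcomplex vanishes. Assembling the pieces, the full complex is the locally finite direct sum $C^*(\g;\M)=\bigoplus_e C^*_e(\g;\M)$ of subcomplexes, so $\HH^*(\g;\M)=\bigoplus_e \HH^*_e(\g;\M)$; since $\HH^*_e = 0$ for $e\neq 0$ and $C^*_0(\g;\M)$ is the $e=0$ summand, this yields $\HH^*(\g;\M)\cong\HH_0^*(\g;\M)$, realised by the inclusion.

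I expect the only genuine obstacle to be bookkeeping in the infinite-dimensional, infinitely-graded setting, rather than anything conceptual: one must check that the homotopy $\psi=\sum_{e\neq 0}h\varphi_e$ used to write $\varphi-\varphi_0 = d\psi$ is a \emph{genuine} cochain and not merely a formal sum. This is guaranteed precisely because $\M$ is a direct sum and not a completion, so on any fixed tuple only finitely many $\varphi_e$ contribute and $\psi$ is well-defined; in our applications $\M=\CC$ and $\M=\W_X(\lambda)$ both have finite-dimensional graded pieces, so the hypothesis is comfortably satisfied. Once this is in place, the algebraic core is the one-line identity $dh+hd=\id$ on each nonzero weight space.
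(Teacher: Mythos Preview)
The paper does not give its own proof of this statement; it simply cites \cite[Theorem 1.5.2]{Fuks}. Your argument via the Cartan formula $\mathcal{L}_{x_0}=d\iota_{x_0}+\iota_{x_0}d$ and the contracting homotopy $h=\tfrac{1}{e}\iota_{x_0}$ on each nonzero weight space is exactly the standard proof (and is the one Fuks gives), and it is correct as written, including your handling of the only delicate point, namely that the pointwise-finite decomposition $\varphi=\sum_e\varphi_e$ and the resulting $\psi=\sum_{e\ne 0}\tfrac{1}{e}\iota_{x_0}\varphi_e$ are genuine cochains because $\M$ is an honest direct sum.
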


By Theorem \ref{thm:internal grading}, the internally graded structure of $\W_X(\lambda)$ means that any 2-cocycle with coefficients in $\CC$ can only be non-trivial on the degree zero part of $\W_X(\lambda) \times \W_X(\lambda)$. In other words, if $\Omega \in Z^2(\g)$, we may assume that
$$\Omega(L_n,L_m) = \Omega(L_n,X_m) = \Omega(X_n,X_m) = 0$$
for all $n,m \in \ZZ$ such that $n + m \neq 0$.

\section{Abelian cocycles}\label{sec:abelian cocycles}

The goal of this section is to compute $\HH_{\Ab}^2(\W_X(\lambda))$ for $\lambda \in \PP^1$ and $X = A$ or $B$.

\begin{proposition}\label{prop:W_X abelian cocycles}
    Let $\lambda \in \PP^1$. Then
    \begin{align*}
        \HH_{\Ab}^2&(\W_A(\lambda)) = 0, \\
        \dim(\HH_{\Ab}^2&(\W_B(\lambda))) = 1.
    \end{align*}
\end{proposition}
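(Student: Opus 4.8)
The plan is to exploit the internal grading of $\W_X(\lambda)$. By Theorem \ref{thm:internal grading}, every class in $\HH_{\Ab}^2(\W_X(\lambda))$ is represented by a degree-zero cocycle, so it suffices to consider abelian $2$-cocycles $\Omega$ with $\Omega(X_n,X_m)=0$ unless $n+m=0$. Such an $\Omega$ is completely determined by the function $f\colon\ZZ\to\CC$, $f(n)=\Omega(X_n,X_{-n})$, which is odd by antisymmetry (so in particular $f(0)=0$). The whole computation then reduces to solving for $f$.

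First I would extract the relevant cocycle identities. Since $\Omega$ is abelian and $[X_p,X_q]=0$, the only constraints come from triples containing exactly one Witt generator and two module generators. Evaluating the cocycle condition on $(L_n,X_p,X_q)$ and retaining only the degree-zero part (which forces $n+p+q=0$, i.e.\ $q=-n-p$) yields
\begin{equation*}
  \omega_{X(\lambda)}(n,-n-p)\,f(p) + \omega_{X(\lambda)}(n,p)\,f(n+p) = 0
\end{equation*}
for all $n,p\in\ZZ$. I would treat $X=A$ and $X=B$ separately, being careful with the Kronecker-delta terms in $\omega_{X(\lambda)}$.

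For the generic part I take $p\neq 0$ and $n+p\neq 0$, so all delta terms vanish. For $X=B$ the relation becomes $p\,f(n+p)=(n+p)\,f(p)$, forcing $f(n)=cn$ for a constant $c$; for $X=A$ it becomes $(n+p)\,f(n+p)=p\,f(p)$, forcing $f(n)=c/n$ for $n\neq 0$. The decisive step is the boundary case $p=0$, where the surviving relation is $\omega_{X(\lambda)}(n,0)\,f(n)=0$. For $X=A$ one computes $\omega_{A(\lambda)}(n,0)=n\bigl(1+(n+1)\lambda\bigr)$ (and $n(n+1)$ when $\lambda=\infty$), giving $c\bigl(1+(n+1)\lambda\bigr)=0$ for all $n\neq 0$; comparing two values of $n$ forces $c=0$, hence $f\equiv 0$ and $\HH_{\Ab}^2(\W_A(\lambda))=0$. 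For $X=B$ one checks directly that $f(n)=cn$ solves the full relation for every $\lambda$ (the delta contributions are $\delta_p^0\,p$ and $\delta_{n+p}^0(n+p)$, both identically zero), so the space of degree-zero abelian cocycles is one-dimensional, spanned by $\Omega_{\Ab}^B$.

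Finally I would confirm that this one-dimensional space survives in cohomology, i.e.\ that $\Omega_{\Ab}^B$ is not a coboundary. Any coboundary $d\psi$ with $\psi\in C^1(\W_B(\lambda))$ satisfies $d\psi(B_n,B_m)=-\psi([B_n,B_m])=0$ because $B(\lambda)$ is abelian, whereas $\Omega_{\Ab}^B(B_1,B_{-1})=1\neq 0$; hence $\Omega_{\Ab}^B$ is cohomologically nontrivial and $\dim\HH_{\Ab}^2(\W_B(\lambda))=1$. I expect the main obstacle to be the bookkeeping of the delta terms in the boundary cases $p=0$ and $n+p=0$: it is precisely the asymmetry between $\omega_{A(\lambda)}$ and $\omega_{B(\lambda)}$ at $p=0$ that makes the $A$-cocycle collapse while the $B$-cocycle persists, and the $\lambda=\infty$ specialisation must be verified alongside the finite cases.
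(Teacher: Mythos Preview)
Your proposal is correct and follows essentially the same approach as the paper: both reduce to degree-zero abelian cocycles via the internal grading, encode such a cocycle by an odd function on $\ZZ$, extract constraints from the cocycle identity on a triple $(L,X,X)$, and observe that abelian coboundaries vanish since $[X_n,X_m]=0$. The only difference is organizational: the paper plugs in $n=0$ directly to kill $\alpha$ in the $A$ case, whereas you first isolate the generic relation $(n+p)f(n+p)=pf(p)$ to get $f(n)=c/n$ and then use the boundary case $p=0$ to force $c=0$; both routes arrive at the same conclusion with comparable effort.
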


By the isomorphism $\HH^2_{\Ab}(\W_X(\lambda)) \cong \B_\W(X(\lambda))$, Proposition \ref{prop:W_X abelian cocycles} implies that
\begin{align*}
    &\B_\W(A(\lambda)) = 0, \\
    \dim(&\B_\W(B(\lambda))) = 1.
\end{align*}
By Theorem \ref{thm:internal grading}, we only need to compute abelian cocycles of degree 0 to prove Proposition \ref{prop:W_X abelian cocycles}. In other words, we may assume that any abelian cocycle of $\W_X(\lambda)$ is of the form
\begin{equation}
    \Omega (X_n, X_m) = \alpha(n) \delta_{n+m}^0,
\end{equation}
where $\alpha \colon \ZZ \to \CC$. By the antisymmetry of $\Omega$, the function $\alpha$ must satisfy $\alpha(-n) = - \alpha(n)$ for all $n \in \ZZ$, and in particular, $\alpha(0) = 0$.

\begin{defn}
    A function $\alpha \colon \ZZ \to \CC$ is called an \emph{abelian cocycle function} on $\W_X(\lambda)$ if the antisymmetric bilinear map $\Omega \colon \W_X(\lambda) \times \W_X(\lambda) \to \CC$ defined by
    $$\Omega(X_n,X_m) = \alpha(n)\delta_{n+m}^0$$
    and $\Omega(L_n,L_m) = \Omega(L_n,X_m) = 0$ is a 2-cocycle, where $n,m \in \ZZ$.
    
    The space of all abelian cocycle functions on $\W_X(\lambda)$ is denoted $\Ab_X(\lambda)$, where $X = A$ or $B$.
\end{defn}

Note that there are no nonzero abelian coboundaries (that is, abelian cocycles which are coboundaries): this is because if $\varphi \colon \W_X(\lambda) \to \CC$ gives rise to an abelian cocycle $d\varphi$, then
$$d\varphi(L_n,L_m) = d\varphi(L_n,X_m) = 0,$$
and $d\varphi(X_n,X_m) = \varphi([X_n,X_m]) = \varphi(0) = 0$ for all $n,m \in \ZZ$, so $d\varphi = 0$. Therefore, we see that
$$\HH_{\Ab}^2(\W_X(\lambda)) \cong \Ab_X(\lambda).$$

If $\Omega$ is a cocycle on $\W_X(\lambda)$, then
\begin{equation}\label{eq:W_X abelian cocycle cond}
    \Omega(X_n,[X_m,L_{-n-m}]) + \Omega(X_m,[L_{-n-m},X_n]) + \Omega(L_{-n-m},[X_n,X_m]) = 0
\end{equation}
for all $n,m \in \ZZ$.

We first look for abelian cocycles of $\W_A (\lambda)$. We aim to prove the following:

\begin{proposition}\label{prop:W_A has no abelian cocycles}
    Let $\lambda \in \PP^1$. Then there are no non-trivial abelian cocycles on $\W_A(\lambda)$, in other words, $\Ab_A(\lambda) = 0$. Consequently, $\HH_{\Ab}^2(\W_A(\lambda)) = 0$.
\end{proposition}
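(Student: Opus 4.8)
The plan is to exploit the internal grading together with the abelian-cocycle reduction already set up. By Theorem \ref{thm:internal grading}, every class in $\HH^2_{\Ab}(\W_A(\lambda))$ is represented by a degree-$0$ abelian cocycle, so I may assume $\Omega(A_n,A_m)=\alpha(n)\delta_{n+m}^0$ for an odd function $\alpha\colon\ZZ\to\CC$ (so $\alpha(0)=0$), with $\Omega$ vanishing on all other pairs of basis vectors. Since there are no nonzero abelian coboundaries, it suffices to show that the only such cocycle is $\alpha\equiv 0$, which gives $\Ab_A(\lambda)=0$ and hence $\HH^2_{\Ab}(\W_A(\lambda))=0$.

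The single constraint comes from the cocycle condition \eqref{eq:W_X abelian cocycle cond} applied to the triple $(A_n,A_m,L_{-n-m})$. Because $[A_n,A_m]=0$, its last term drops out, and expanding $[L_{-n-m},A_m]$ and $[L_{-n-m},A_n]$ via $\omega_{A(\lambda)}$ turns \eqref{eq:W_X abelian cocycle cond} into a single scalar identity relating $\alpha(n)$ and $\alpha(m)$. Writing $s=n+m$, this reads
$$\bigl(n-s(s-1)\lambda\,\delta_m^0\bigr)\alpha(n)+\bigl(-m+s(s-1)\lambda\,\delta_n^0\bigr)\alpha(m)=0$$
for finite $\lambda$, with the factor $s(s-1)\lambda$ replaced by $s^2$ when $\lambda=\infty$.

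I would then read off the solutions by specialising $n,m$. For $n,m\in\ZZ\nonzero$ the delta terms vanish and the identity collapses to $n\alpha(n)=m\alpha(m)$; hence $n\alpha(n)$ is a constant $C$ independent of $n\in\ZZ\nonzero$ (the diagonal $m=-n$ case reproduces this identity trivially via $\alpha(-n)=-\alpha(n)$, so it yields nothing new). To pin down $C$, I set $m=0$: the identity becomes $n\bigl(1-(n-1)\lambda\bigr)\alpha(n)=0$ for finite $\lambda$, and $n(1-n)\alpha(n)=0$ for $\lambda=\infty$. Choosing $n=1$ when $\lambda\neq\infty$ gives $\alpha(1)=0$, whence $C=1\cdot\alpha(1)=0$; when $\lambda=\infty$ the choice $n=1$ degenerates, so I instead take $n=2$ to get $\alpha(2)=0$ and again $C=2\alpha(2)=0$. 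In either case $n\alpha(n)=C=0$ forces $\alpha\equiv0$, completing the argument.

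The computation is short, so the only real care needed is in the boundary cases: tracking the Kronecker-delta terms that switch on exactly when $n=0$ or $m=0$, and treating $\lambda=\infty$ separately, where the naive choice $n=1$ in the $m=0$ equation gives a vacuous $0=0$ and one must use a different index. This is the main (indeed the only) obstacle, and it is resolved simply by choosing $n=2$.
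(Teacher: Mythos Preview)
Your proof is correct and follows essentially the same approach as the paper: both reduce to the cocycle identity on the triple $(A_n,A_m,L_{-n-m})$ and then specialise the indices. Your additional observation that $n\alpha(n)$ is constant on $\ZZ\nonzero$ is a mild refinement that handles in one stroke the values of $m$ where the $m=0$ specialisation degenerates (e.g.\ $m=1+1/\lambda$ for finite $\lambda$, or $m=1$ for $\lambda=\infty$), a point the paper's argument glosses over.
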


We split this into two cases: $\lambda\neq\infty$ (in other words, $\lambda\in\mathbb{C}$) and $\lambda=\infty$.

\begin{lemma}
    Let $\lambda \in \CC$ and $\alpha \in \Ab_A(\lambda)$. Then $\alpha = 0$.
\end{lemma}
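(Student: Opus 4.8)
The plan is to show that the only abelian cocycle function $\alpha$ on $\W_A(\lambda)$ (for $\lambda \in \CC$) is identically zero, by imposing the cocycle condition \eqref{eq:W_X abelian cocycle cond} and extracting a recurrence on the values $\alpha(n)$. Recall that an abelian cocycle is determined by $\Omega(A_n,A_m)=\alpha(n)\delta_{n+m}^0$, with $\alpha$ odd (so $\alpha(0)=0$), and that $[A_n,A_m]=0$, so the third term in \eqref{eq:W_X abelian cocycle cond} vanishes. The remaining condition reads
$$\Omega(A_n,[A_m,L_{-n-m}]) + \Omega(A_m,[L_{-n-m},A_n]) = 0$$
for all $n,m\in\ZZ$. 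First I would substitute the module action $[L_k,A_\ell]=\omega_{A(\lambda)}(k,\ell)A_{k+\ell}=(k+\ell+k(k+1)\lambda\delta_\ell^0)A_{k+\ell}$ and use the antisymmetry of the bracket to rewrite each term. Since $[A_m,L_{-n-m}] = -\omega_{A(\lambda)}(-n-m,m)A_{-n}$ lands in degree $-n$, pairing with $A_n$ produces $\alpha(n)$; similarly the second term produces a multiple of $\alpha(m)$.

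The key computation is therefore to evaluate the coefficients. The first term becomes $-\omega_{A(\lambda)}(-n-m,m)\,\alpha(n)$ and the second $\omega_{A(\lambda)}(-n-m,n)\,\alpha(m)$, giving the relation
$$\omega_{A(\lambda)}(-n-m,n)\,\alpha(m) = \omega_{A(\lambda)}(-n-m,m)\,\alpha(n)$$
for all $n,m$. The next step is to specialize cleverly. Setting $m=0$ kills $\alpha(m)=\alpha(0)=0$ on the left, forcing $\omega_{A(\lambda)}(-n,0)\,\alpha(n)=0$; since $\omega_{A(\lambda)}(-n,0) = -n + (-n)(-n+1)\lambda$, this already annihilates $\alpha(n)$ for all $n$ where this coefficient is nonzero. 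I expect this coefficient to vanish only for finitely many $n$ (those $n$ solving $-n + n(n-1)\lambda = 0$, i.e. $n=0$ or $n = 1 + 1/\lambda$ when $\lambda\neq 0$), so $\alpha(n)=0$ for all but at most one nonzero value of $n$.

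To finish off the remaining exceptional value(s) of $n$, I would feed general $n,m$ back into the relation $\omega_{A(\lambda)}(-n-m,n)\,\alpha(m) = \omega_{A(\lambda)}(-n-m,m)\,\alpha(n)$ and choose $m$ to be a generic index where $\alpha(m)=0$ is already known, leaving $\omega_{A(\lambda)}(-n-m,m)\,\alpha(n)=0$; for a suitable choice of such $m$ the coefficient $\omega_{A(\lambda)}(-n-m,m)=-n-m+m+\dots = -n + (\text{correction from the }\delta\text{ term})$ will be nonzero, forcing $\alpha(n)=0$ at the last exceptional index as well. One must handle the Kronecker-delta terms with care, since $\omega_{A(\lambda)}$ has a special value when its second argument is $0$ or when the relevant index vanishes; the cleanest route is to pick $m\neq 0$ and $n+m\neq 0$ so that all deltas drop out and $\omega_{A(\lambda)}$ reduces to its linear part.

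The main obstacle I anticipate is bookkeeping around the $\delta_m^0$ term in $\omega_{A(\lambda)}$: one has to separate the cases where arguments of $\omega_{A(\lambda)}$ hit zero, and verify that the special point $n=1+1/\lambda$ (which may or may not be an integer depending on $\lambda$) is genuinely eliminated by some admissible choice of $m$. Once the grading reduces everything to the single equation above and the $m=0$ specialization clears almost all indices, killing the last index is a short finite check, so I do not expect serious difficulty beyond careful case analysis on $\lambda$.
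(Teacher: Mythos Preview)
Your proposal is correct and follows essentially the same route as the paper: expand the cocycle condition \eqref{eq:W_X abelian cocycle cond}, specialize one index to $0$, and read off $\alpha(n)=0$ from the resulting coefficient $(1+(1-n)\lambda)\alpha(n)=0$ (equivalently, your $\omega_{A(\lambda)}(-n,0)\alpha(n)=0$). The paper simply asserts that this kills $\alpha(m)$ for all $m$, whereas you are more scrupulous about the possible exceptional index $n_0=1+1/\lambda$ when it happens to be an integer; your proposed fix---plugging in a generic $m\neq 0$ with $\alpha(m)=0$ already known, so that the deltas vanish and $\omega_{A(\lambda)}(-n_0-m,m)=-n_0\neq 0$---works and cleanly closes that residual case.
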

\begin{proof}
    The cocycle condition \eqref{eq:W_X abelian cocycle cond} reads
    \begin{equation}
    \label{eq: W_A abelian cocycle cond}
        (m + (m + n)(1 - m - n) \lambda \delta^0_n) \alpha(m) - (n + (m + n)(1 - m - n) \lambda \delta^0_m) \alpha(n) = 0,
    \end{equation}
    for all $n,m \in \ZZ$. Consider $m\neq 0,\ n=0$. Then \eqref{eq: W_A abelian cocycle cond} gives
    \begin{equation}
        m\big(1+(1-m)\lambda\big)\alpha(m) = 0 \iff (1+(1-m)\lambda\big)\alpha(m) = 0.
    \end{equation}
    Setting $m=1$ above forces $\alpha(1)=0$, and thus $\alpha(-1)=0$ due to the antisymmetry of $\alpha: \ZZ \rightarrow \CC$. Likewise, for any $m\in\mathbb{Z}$, we obtain $\alpha(m)=0$ from the above, which finishes the proof.
\end{proof}

Similarly to the case $\lambda \in \CC$, it is straightforward to prove that $\W_A(\infty)$ also does not have any non-trivial abelian cocycles.

\begin{lemma}
    Let $\alpha \in \Ab_A(\infty)$. Then $\alpha = 0$.
\end{lemma}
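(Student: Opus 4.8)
The plan is to mirror the $\lambda \in \CC$ argument, adapting the abelian cocycle condition to the degenerate action of $\W$ on $A(\infty)$, where the term $n(n+1)\lambda$ is replaced by $n^2$. First I would evaluate the general cocycle condition \eqref{eq:W_X abelian cocycle cond} on the homogeneous triple $(A_n, A_m, L_{-n-m})$, using $[L_n, A_m] = \omega_{A(\infty)}(n,m)A_{n+m}$ with $\omega_{A(\infty)}(n,m) = n + m + n^2\delta^0_m$. Since $[A_n,A_m] = 0$, the third term drops out, and computing $\omega_{A(\infty)}(-n-m,m) = -n + (n+m)^2\delta^0_m$ and $\omega_{A(\infty)}(-n-m,n) = -m + (n+m)^2\delta^0_n$ (using $(-n-m)^2 = (n+m)^2$) yields the $\lambda = \infty$ analogue of \eqref{eq: W_A abelian cocycle cond}, namely
$$(m - (m+n)^2\delta^0_n)\alpha(m) - (n - (m+n)^2\delta^0_m)\alpha(n) = 0$$
for all $n,m \in \ZZ$.

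The key step is the specialisation $n = 0$, $m \neq 0$. Then $\delta^0_n = 1$, $\delta^0_m = 0$, $(m+n)^2 = m^2$, and $\alpha(0) = 0$, so the condition collapses to $(m - m^2)\alpha(m) = 0$, i.e. $m(1-m)\alpha(m) = 0$. Dividing by $m \neq 0$ gives $(1-m)\alpha(m) = 0$, which forces $\alpha(m) = 0$ for every $m \neq 1$; in particular $\alpha(-1) = 0$. The remaining value $\alpha(1)$ is then handled by the antisymmetry of $\alpha$: $\alpha(1) = -\alpha(-1) = 0$. Hence $\alpha = 0$, as required.

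I do not anticipate a genuine obstacle here. In contrast to the $\lambda \in \CC$ case, where one had to check that the coefficient $1 + (1-m)\lambda$ is nonzero at $m = 1$ in order to conclude $\alpha(1) = 0$, the single substitution $n = 0$ already annihilates every Fourier mode except $\alpha(1)$, and antisymmetry finishes the job with no extra case analysis. The only point deserving care is the bookkeeping in the $\lambda = \infty$ specialisation of $\omega_A$, that is, confirming that the replacement $n(n+1)\lambda \mapsto n^2$ produces exactly the coefficient $(m+n)^2$ appearing above.
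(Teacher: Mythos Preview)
Your proof is correct and follows essentially the same route as the paper: derive the $\lambda=\infty$ cocycle identity, specialise to $n=0$, $m\neq 0$ to obtain $m(1-m)\alpha(m)=0$, and conclude. Your handling of the residual case $m=1$ via antisymmetry is in fact slightly more explicit than the paper's, which simply refers back to ``the same argument as for the $\lambda\neq\infty$ case'' even though at $\lambda=\infty$ the coefficient vanishes precisely at $m=1$.
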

\begin{proof}
    The cocycle condition reads
    \begin{equation}
    \label{eq: W_A abelian cocycle cond infty}
        (-n+(m+n)^2\delta^0_m)\alpha(n)-(-m+(m+n)^2\delta^0_n)\alpha(m)=0,
    \end{equation}
    for all $n,m \in \ZZ$. Once again, we consider $m\neq 0,\ n=0$, so \eqref{eq: W_A abelian cocycle cond infty} gives 
    \begin{equation}
        m(1-m)\alpha(m)=0.
    \end{equation}
    By the same argument as for the $\lambda\neq\infty$ case, the above tells us that $\alpha(m)=0$ for all $m\in\ZZ$.
\end{proof}

This concludes the proof of Proposition \ref{prop:W_A has no abelian cocycles}.

The situation for $\W_B(\lambda)$ is very different to that of $\W_A(\lambda)$: we will see that there is a non-trivial abelian cocycle, which we define below.

\begin{Notation}\label{ntt:abelian cocycle B}
    Define $\iota \colon \ZZ \to \CC$ by $\iota(n) = n$ for all $n \in \ZZ$.
\end{Notation}

We now prove that $\iota$ is an abelian cocycle function for $\W_B(\lambda)$, and that $\Ab_B(\lambda)$ is one-dimensional, spanned by $\iota$.

\begin{proposition}\label{prop:W_B abelian cocycles}
    Let $\lambda \in \PP^1$. Then $\iota$ is the unique (up to scalar multiplication) abelian cocycle function on $\W_B(\lambda)$, in other words, $\Ab_B(\lambda) = \CC \iota$. In particular, if $\Omega$ is an abelian cocycle on $\W_B(\lambda)$ of degree zero, then, up to multiplication by a scalar, we have
    $$\Omega(B_n,B_m) = n\delta_{n+m}^0.$$
    Consequently, $\dim(\HH_{\Ab}^2(\W_B(\lambda))) = 1$.
\end{proposition}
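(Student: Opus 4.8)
The plan is to directly analyse the abelian cocycle condition \eqref{eq:W_X abelian cocycle cond} for $\W_B(\lambda)$, following the same strategy as in the $\W_A(\lambda)$ case but now producing a nonzero solution. First I would write down the cocycle condition explicitly. For an abelian cocycle function $\alpha \colon \ZZ \to \CC$ with $\Omega(B_n,B_m) = \alpha(n)\delta_{n+m}^0$, the relation \eqref{eq:W_X abelian cocycle cond} with the third term $\Omega(L_{-n-m},[B_n,B_m]) = 0$ (since $B(\lambda)$ is abelian) becomes a condition purely in terms of the $\W$-action $[L_k, B_m] = \omega_{B(\lambda)}(k,m)B_{k+m}$. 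Substituting $\omega_{B(\lambda)}(-n-m,m) = m - (n+m)(n+m-1)\lambda\delta_n^0$ and the analogous term with $n,m$ swapped, I expect the condition to reduce to something like
\begin{equation*}
    \bigl(m - (n+m)(n+m-1)\lambda\delta_n^0\bigr)\alpha(n) - \bigl(n - (n+m)(n+m-1)\lambda\delta_m^0\bigr)\alpha(m) = 0
\end{equation*}
for all $n,m \in \ZZ$. I would verify carefully the signs and the placement of the Kronecker deltas.

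Next I would solve this functional equation. For the generic case $n,m \neq 0$ with $n+m \neq 0$ (the $n+m=0$ case is forced by the degree-zero reduction from Theorem \ref{thm:internal grading}), the deltas vanish and the condition becomes $m\,\alpha(n) = n\,\alpha(m)$, i.e. $\alpha(n)/n$ is constant on the nonzero integers. This immediately forces $\alpha(n) = cn$ for some constant $c \in \CC$, which is precisely $c\,\iota(n)$. I would then check the remaining cases where one index is zero: using $\alpha(0) = 0$ (which follows from antisymmetry $\alpha(-n) = -\alpha(n)$ as noted before the definition) and verifying that the delta terms do not obstruct the solution $\alpha = c\iota$. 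The point is that the $\lambda$-dependent correction terms only appear when $n = 0$ or $m = 0$, and in those cases the factor $m$ or $n$ respectively already annihilates the contribution, so $\iota$ survives for every $\lambda \in \PP^1$ (including $\lambda = \infty$, where the correction becomes $(n+m)^2$ but is still killed by the vanishing index).

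Finally I would confirm that $\iota$ genuinely satisfies the full cocycle condition (not merely the reduced generic one), closing the verification that $\Ab_B(\lambda) = \CC\iota$ is exactly one-dimensional. Since there are no nonzero abelian coboundaries, as established in the paragraph preceding Notation \ref{ntt:abelian cocycle B}, we have $\HH_{\Ab}^2(\W_B(\lambda)) \cong \Ab_B(\lambda) = \CC\iota$, giving $\dim(\HH_{\Ab}^2(\W_B(\lambda))) = 1$. The main obstacle I anticipate is purely bookkeeping: correctly tracking the Kronecker-delta correction terms arising from the $\lambda\delta_{n+m}^0$ factor in the $B(\lambda)$-action across the boundary cases $n=0$, $m=0$, and $\lambda = \infty$, to be sure that the proposed solution $\iota$ is unobstructed and that no additional exotic solutions sneak in for special values of $\lambda$. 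Unlike the $\W_A(\lambda)$ computation, here the generic relation $m\alpha(n) = n\alpha(m)$ admits a nonzero solution, which is the essential structural difference between the two families and reflects that $A(\lambda)$ is the restricted dual of $B(\lambda)$.
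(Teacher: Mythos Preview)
Your proposal is correct and follows essentially the same route as the paper: write out the cocycle condition, observe that for $n,m\neq 0$ the $\delta$-terms vanish so the relation collapses to $m\,\alpha(n)=n\,\alpha(m)$, deduce $\alpha(n)=cn$, and handle the edge cases using $\alpha(0)=0$. One small remark: your parenthetical about ``the $n+m=0$ case'' is a slight conflation --- in \eqref{eq:W_X abelian cocycle cond} the indices $n,m$ range over all of $\ZZ$ independently (the degree-zero reduction is already encoded in the form $\Omega(B_n,B_m)=\alpha(n)\delta_{n+m}^0$), so there is no separate $n+m\neq 0$ restriction to impose; but this does not affect your argument.
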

\begin{proof}
    Let $\alpha \in \Ab_B(\lambda)$. The cocycle condition \eqref{eq:W_X abelian cocycle cond} reads
    \begin{equation}
        \label{eq: W_B abelian cocycle cond}
        (m+(m+n)(1-m-n)\lambda\delta^0_n)\alpha(n)-(n+(m+n)(1-m-n)\delta^0_m)\alpha(m)=0
    \end{equation}
    for $\lambda\neq\infty$, with $- (m+n)^2$ instead of $(m+n)(1-m-n)\lambda$ for $\lambda=\infty$. The only non-trivial case to consider is $m,n\neq0$, in which case, equation \eqref{eq: W_B abelian cocycle cond} gives
    \begin{equation}
    \label{eq: reduced W_B abelian cocycle cond}
        m\alpha(n)-n\alpha(m)=0.
    \end{equation}
    Any abelian cocycle on $\W_B(\lambda)$ has to satisfy \eqref{eq: reduced W_B abelian cocycle cond} for $m,n\neq 0$, with no other constraints. To show that there is a unique (up to scalar multiplication) solution, we set $m=1$ in \eqref{eq: reduced W_B abelian cocycle cond}, which gives $\alpha(n)=n\alpha(1)$. We split the rest of the proof in two cases: $\alpha(1) = 0$ and $\alpha(1) \neq 0$.
    
    If $\alpha(1)=0$, the above gives $\alpha(n)=0$ for all $n \neq 0$. But the antisymmetry of $\alpha \colon \ZZ\rightarrow\CC$ implies $\alpha(0)=0$, so $\alpha(n)=0$ for all $n\in\ZZ$.

    If $\alpha(1)\neq 0$, we obtain the nonzero solution $\alpha = \alpha(1) \iota$. This concludes the proof.
\end{proof}

Proposition \ref{prop:W_X abelian cocycles} now follows by combining Propositions \ref{prop:W_A has no abelian cocycles} and \ref{prop:W_B abelian cocycles}.

\section{Mixing cocycles}\label{sec:mixing cocycles}

In this section, we now turn our attention to mixing cocycles on $\W_X(\lambda)$, with the goal of computing $\HH_{\Mix}^2(\W_X(\lambda))$.

\begin{proposition}\label{prop:W_X mixing cocycles}
    Let $\lambda \in \PP^1$. Then
    $$\dim(\HH_{\Mix}^2(\W_A(\lambda))) = \begin{cases}
        2, &\text{if } \lambda = 0,\\
        1, &\text{if } \lambda \neq 0,
    \end{cases}$$
    while $\dim(\HH_{\Mix}^2(\W_B(\lambda))) = 1$.
\end{proposition}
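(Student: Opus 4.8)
The plan is to exploit the internal grading (Theorem \ref{thm:internal grading}) exactly as in the abelian case: by Theorem \ref{thm:internal grading} and the structure of mixing cocycles, any mixing cocycle $\Omega$ on $\W_X(\lambda)$ may be assumed to be of degree zero, so it is determined by a function $\beta \colon \ZZ \to \CC$ via
$$\Omega(L_n,X_m) = \beta(n)\delta_{n+m}^0,$$
with $\Omega(L_n,L_m) = \Omega(X_n,X_m) = 0$ for all $n,m \in \ZZ$. The Jacobi/cocycle condition must be imposed on triples drawn from $\{L_n,X_m\}$; since $\Omega$ vanishes unless exactly one argument is an $L$ and one is an $X$, the only nontrivial cocycle identities come from triples of the form $(L_n,L_m,X_k)$. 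I would write out $d\Omega(L_n,L_m,X_k) = 0$ explicitly, which produces a functional equation relating $\beta(n+m)$, $\beta(n)$, and $\beta(m)$ with coefficients built from $\omega_{X(\lambda)}$. This single recurrence (one for $X=A$, one for $X=B$, each split into the $\lambda \neq \infty$ and $\lambda = \infty$ subcases) governs all mixing cocycles.

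\textbf{Solving the recurrence and quotienting by coboundaries.}
Having obtained the functional equation for $\beta$, I would first record the coboundary space: a mixing coboundary arises from a linear functional $\varphi$ on $\W_X(\lambda)$ that is nonzero only on the $X_n$, giving $d\varphi(L_n,X_m) = -\varphi(L_n \cdot X_m) = -\omega_{X(\lambda)}(n,m)\varphi(X_{n+m})$. These span a subspace I must quotient out to pass from $Z^2_{\Mix}$ to $\HH^2_{\Mix}$. The heart of the proof is then solving the recurrence for $\beta$ up to these coboundaries. The expected outcome, matching the cocycles $\Omega_{\Mix}^A$, $\Omega_0^A$, $\Omega_{\Mix}^B$ already named in Theorem \ref{thm:main}, is a one-dimensional solution space in every case \emph{except} $X = A, \lambda = 0$, where it is two-dimensional. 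I would proceed by substituting special values (for instance $m = 0$, or $n = -m$, and the generating choices $n = 1$) to pin down $\beta$ on generators, then verify that the general relation forces the claimed form.

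\textbf{The special case and the main obstacle.}
The delicate point, and the main obstacle, is the case $X = A$, $\lambda = 0$, where the dimension jumps to $2$. Here I expect the recurrence to degenerate: the $\delta_m^0$ contributions in $\omega_{A(\lambda)}$ that normally constrain $\beta$ at the origin drop out when $\lambda = 0$, freeing an extra parameter and producing the second independent cocycle $\Omega_0^A(L_n,A_m) = n\delta_{n+m}^0$ alongside $\Omega_{\Mix}^A$. I would need to check carefully that $\Omega_0^A$ is genuinely not a coboundary and is independent of $\Omega_{\Mix}^A$ in cohomology, since $\lambda = 0$ is precisely the degenerate locus where the density module $A(0) \cong I(0,1)$ and the analysis of \cite{GaoJiangPei} already flags extra cohomology. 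For $X = B$ the analogous degeneration does not produce a new mixing class (the jump at $\lambda = 0$ for $B$ is absorbed into the change $n\delta_{n+m}^0 \mapsto n^2\delta_{n+m}^0$ in $\Omega_{\Mix}^B$ rather than a dimension increase), so I would treat the $B$ recurrence as uniformly one-dimensional, taking extra care that the $\lambda = 0$ and $\lambda \neq 0$ normalizations agree with the stated cocycle. Throughout, I would lean on the fact that $A(\lambda)$ is the restricted dual of $B(\lambda)$ to cross-check that the $A$ and $B$ computations are consistent dualizations of one another.
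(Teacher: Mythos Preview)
Your proposal is correct and follows essentially the same approach as the paper: reduce to degree-zero mixing cocycle functions $\beta$ via the internal grading, extract the recurrence from $d\Omega(L_n,L_m,X_{-n-m})=0$, compute the coboundary subspace, and solve. One point worth making explicit early on: for $X=A$ the coboundary space $\BMix_A(\lambda)$ is identically zero (since $[L_n,A_{-n}]=0$ for all $n$), so $\HH_{\Mix}^2(\W_A(\lambda)) \cong \ZMix_A(\lambda)$ and the non-triviality check you flag for $\Omega_0^A$ becomes automatic; the paper records this as a separate lemma and, for $\lambda=0$, simply cites \cite{GaoJiangPei} (using $A(0)\cong I(0,1)$) rather than redoing the recurrence, though your direct plan would work just as well.
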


By the isomorphism $\HH^2_{\Mix}(\W_X(\lambda)) \cong \HH^1(\W;X(\lambda)')$, Proposition \ref{prop:W_X abelian cocycles} implies that
\begin{align*}
    \dim(\HH^1(\W;A(\lambda))) &= 1, \\
    \dim(\HH^1(\W;B(\lambda))) &= \begin{cases}
        2, &\text{if } \lambda = 0, \\
        1, &\text{if } \lambda \neq 0,
    \end{cases}.
\end{align*}
Where we have used that $A(\lambda)' \cong B(\lambda)$ and $B(\lambda)' \cong A(\lambda)$.

Similarly to before, we may assume that a mixing cocycle $\Omega \colon \W_X(\lambda) \times \W_X(\lambda) \to \CC$ is of the form
\begin{equation}
    \Omega(L_n,X_m) = \beta(n)\delta_{n+m}^0,
\end{equation}
where $\beta \colon \ZZ \to \CC$. Note that in this case, we no longer have $\beta(-n) = -\beta(n)$.

\begin{defn}
    A function $\beta \colon \ZZ \to \CC$ is called a \emph{mixing cocycle function} on $\W_X(\lambda)$ if the antisymmetric bilinear map $\Omega \colon \W_X(\lambda) \times \W_X(\lambda) \to \CC$ defined by
    $$\Omega(L_n,X_m) = \beta(n)\delta_{n+m}^0$$
    and $\Omega(L_n,L_m) = \Omega(X_n,X_m) = 0$ is a 2-cocycle, where $n,m \in \ZZ$. If $\Omega$ is a coboundary, then we call $\beta$ a \emph{mixing coboundary function}.
    
    The space of all mixing cocycle functions on $\W_X(\lambda)$ is denoted $\ZMix_X(\lambda)$, while the space of mixing coboundary functions on $\W_X(\lambda)$ is denoted $\BMix_X(\lambda)$.
\end{defn}

In this case, we have
$$\HH_{\Mix}^2(\W_X(\lambda)) \cong \ZMix_X(\lambda)/\BMix_X(\lambda),$$
by Theorem \ref{thm:internal grading}. The cocycle condition gives
\begin{equation}\label{eq:W_X mixing cocycle cond}
    \Omega(L_n,[L_m,X_{-n-m}]) + \Omega(L_m,[X_{-n-m},L_n]) + \Omega(X_{-n-m},[L_n,L_m]) = 0,
\end{equation}
where $n,m \in \ZZ$, which gives a condition that mixing cocycle functions must satisfy. As we did for abelian cocycles, we begin by considering $\W_A(\lambda)$. We first define a mixing cocycle function for $\W_A(\lambda)$.

\begin{Notation}\label{ntt:mixing cocycle A}
    For $\lambda \in \CC$, define $\beta_\lambda \colon \ZZ \to \CC$ by
    $$\beta_\lambda(n) = \begin{cases}
        \lambda + 1, &\text{if $n  = 0$}, \\
        1, &\text{otherwise}.
    \end{cases}$$
    We define $\beta_\infty \colon \ZZ \to \CC$ by $\beta(n) = 1$ for all $n \in \ZZ$.
\end{Notation}

Proposition \ref{prop:W_X mixing cocycles} for $\W_A(\lambda)$ will follow from the refinement in the next result.

\begin{proposition}\label{prop:W_A mixing cocycles}
    Let $\lambda \in \PP^1$. Then $\ZMix_A(0) = \CC \beta_0 \oplus \CC \iota$, where $\iota$ is as in Notation \ref{ntt:abelian cocycle B}, and $\ZMix_A(\lambda) = \CC \beta_\lambda$ if $\lambda \neq 0$. In particular, if $\Omega$ is a mixing cocycle on $\W_A(\lambda)$ of degree zero, then for $\lambda \neq 0$ we have, up to multiplication by a scalar,
    $$\Omega(L_n,A_m) = \begin{cases}
        (\lambda + 1)\delta_{n+m}^0, &\text{if } n = 0 \text{ and } \lambda \neq \infty, \\
        \delta_{n+m}^0, &\text{otherwise},
    \end{cases}$$
    while for $\lambda = 0$, there exist $a,b \in \CC$ such that
    $$\Omega(L_n,A_m) = (an + b)\delta_{n+m}^0.$$
    Furthermore, $\BMix_A(\lambda) = 0$, so
    $$\dim(\HH_{\Mix}^2(\W_A(\lambda))) = \begin{cases}
        2, &\text{if } \lambda = 0,\\
        1, &\text{if } \lambda \neq 0.
    \end{cases}$$
\end{proposition}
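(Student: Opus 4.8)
The plan is to translate the cocycle condition \eqref{eq:W_X mixing cocycle cond} into a recurrence relation on the function $\beta \colon \ZZ \to \CC$, solve that recurrence explicitly, and then verify that there are no nontrivial mixing coboundaries. First I would compute the three terms of \eqref{eq:W_X mixing cocycle cond} for the module $A(\lambda)$ using the brackets $[L_m,A_{-n-m}] = \omega_{A(\lambda)}(m,-n-m)A_{-n}$ and $[L_n,L_m] = (m-n)L_{n+m}$. Writing $\Omega(L_k,A_{-k}) = \beta(k)$, this produces a scalar equation relating $\beta(n)$, $\beta(m)$, and $\beta(n+m)$, whose exact form depends on the Kronecker-delta corrections $n(n+1)\lambda\delta^0_{\,\cdot}$ appearing in $\omega_{A(\lambda)}$. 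The generic term (when none of the indices vanish) should read something like $(n+m)\beta(n+m) - (\text{coefficient})\beta(n) - (\text{coefficient})\beta(m) = 0$, and I expect the cleanest information to come from specialising $m = 0$, $m = -n$, and $m = 1$, since these are exactly where the $\lambda\delta$ terms switch on or off.

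The key steps, in order, are as follows. First I would extract from the generic (all-nonzero) case a homogeneous recurrence forcing $\beta$ to be constant on $\ZZ \nonzero$, say $\beta(n) = c$ for all $n \neq 0$; this should follow by setting $m = 1$ and inducting on $n$, much as in the abelian computation above. Second, I would isolate the value $\beta(0)$ by plugging in an index combination that activates the $\lambda$-correction term, i.e.\ one of $m = 0$ or $n+m = 0$ together with $L_0$ acting; this relates $\beta(0)$ to $c$ and should yield $\beta(0) = (\lambda+1)c$ when $\lambda \neq \infty$ (matching $\beta_\lambda$), and $\beta(0)$ unconstrained beyond $\beta(0)=c$ in the $\lambda=\infty$ case. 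Third, I would identify the \emph{degenerate} behaviour at $\lambda = 0$: when $\lambda = 0$ the correction term vanishes identically, so the equation no longer pins $\beta(0)$ to the other values, and moreover the generic recurrence degenerates enough to permit the extra linear solution $\beta(n) = n$, i.e.\ $\iota$. This is why $\ZMix_A(0)$ is two-dimensional, spanned by $\beta_0$ and $\iota$, whereas for $\lambda \neq 0$ the correction term rigidifies $\beta$ down to the single line $\CC\beta_\lambda$.

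Finally, to get from $\ZMix_A(\lambda)$ to $\HH^2_{\Mix}$ I would show $\BMix_A(\lambda) = 0$. A mixing coboundary comes from some $\varphi \in C^1(\W_A(\lambda))$ with $d\varphi$ of mixing type, so $d\varphi(L_n,A_m) = -\varphi([L_n,A_m]) = -\omega_{A(\lambda)}(n,m)\varphi(A_{n+m})$; I would argue that for this to be supported only in degree zero and to define a genuine mixing cocycle, the relevant values $\varphi(A_0)$ (and $\varphi(L_0)$) are forced to produce the zero function, using that $\omega_{A(\lambda)}(n,-n)=0$ kills the diagonal contribution. The main obstacle I anticipate is bookkeeping the case distinctions cleanly: the coefficient $n(n+1)\lambda\delta^0_m$ breaks the symmetry between the $n$- and $m$-slots, so the recurrence is not symmetric, and I must be careful that the specialisations $m=0$ versus $n=0$ genuinely give independent constraints rather than the same one twice. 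The conceptual heart, and the place where $\lambda=0$ must be singled out, is recognising that the $\lambda$-term is precisely what excludes the $\iota$ solution and fixes $\beta(0)$, so the bulk of the care goes into showing that at $\lambda=0$ both of those rigidity mechanisms switch off simultaneously.
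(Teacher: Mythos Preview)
Your plan has a genuine gap at the second step. The ``generic'' cocycle relation --- the one obtained from \eqref{eq:W_X mixing cocycle cond} when $n+m\neq 0$ --- is
\[
-n\,\beta(n) + m\,\beta(m) + (n-m)\,\beta(n+m)=0,
\]
and crucially this equation contains no $\lambda$: the correction $n(n+1)\lambda\delta^0_{\bullet}$ in $\omega_{A(\lambda)}$ is activated only by $\delta^0_{n+m}$. Both the constant function and $\iota(n)=n$ satisfy this relation (check $m=1$: $(n-1)\beta(n+1)=n\beta(n)-\beta(1)$ holds for $\beta\equiv c$ and for $\beta=\iota$). So the generic recurrence alone gives a two–dimensional solution space, not constancy, and nothing about it ``degenerates'' at $\lambda=0$ --- it is the same equation for every $\lambda$. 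Your internal summary in the last paragraph (``the $\lambda$-term is precisely what excludes the $\iota$ solution'') is the correct intuition, but it contradicts your step~2.

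What actually distinguishes $\lambda\neq 0$ from $\lambda=0$ is the family of relations at $n+m=0$, where the $\lambda$-term is switched on. These do two jobs simultaneously: they pin down $\beta(0)$ \emph{and} they eliminate $\iota$. Concretely, the paper subtracts a multiple of $\beta_\lambda$ to arrange $\beta'(1)=0$, then assumes $\beta'(-1)\neq 0$ and computes $\beta'(\pm 2)$ from generic relations; feeding these into the \emph{two} special relations $(n,m)=(1,-1)$ and $(n,m)=(2,-2)$ yields two different expressions for $\beta'(0)$ which agree only when $\lambda=0$, a contradiction. With $\beta'(\pm 1)=0$ established, the generic $m=\pm 1$ recurrences then force $\beta'\equiv 0$. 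Your outline, by contrast, only uses the $n+m=0$ relation once (to fix $\beta(0)$), which is not enough: a single such relation is consistent with the two-parameter family and cannot by itself kill $\iota$.

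Your argument for $\BMix_A(\lambda)=0$ is fine and matches the paper: since $[L_n,A_{-n}]=0$ for all $n$, every mixing coboundary vanishes identically.
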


Since it is not entirely obvious, we first explicitly prove that $\beta_\lambda \in \ZMix_A(\lambda)$.

\begin{lemma}\label{lem:mixing cocycle function A}
    Let $\lambda \in \PP^1$. Then the map $\beta_\lambda$ is a mixing cocycle function on $\W_A(\lambda)$. In other words, there is a mixing cocycle $\Omega \in Z^2(\W_A(\lambda))$ defined by
    $$\Omega(L_n,A_m) = \begin{cases}
        (\lambda + 1)\delta_{n+m}^0, &\text{if } n = 0 \text{ and } \lambda \neq \infty, \\
        \delta_{n+m}^0, &\text{otherwise},
    \end{cases}$$
    for all $n,m \in \ZZ$.
\end{lemma}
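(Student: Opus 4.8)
The plan is to verify directly that the antisymmetric bilinear map $\Omega$ determined by $\Omega(L_n,A_m) = \beta_\lambda(n)\delta_{n+m}^0$ (and vanishing on $L$--$L$ and $A$--$A$ pairs) satisfies the $2$-cocycle condition $\Omega(x,[y,z]) + \Omega(y,[z,x]) + \Omega(z,[x,y]) = 0$. Since $\W_A(\lambda)$ is internally graded, Theorem \ref{thm:internal grading} lets me restrict attention to homogeneous basis triples, and because $\Omega$ is a mixing cocycle the only triples on which the condition is not automatically trivial are those of the form $(L_n,L_m,A_k)$. Indeed, a triple of three $L$'s contributes only terms $\Omega(L,[L,L])=0$, while any triple containing two $A$'s contributes only terms of the form $\Omega(A,A)=0$ or $\Omega(L,[A,A])=\Omega(L,0)=0$, as $A(\lambda)$ is abelian. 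For the triple $(L_n,L_m,A_k)$ the condition is precisely \eqref{eq:W_X mixing cocycle cond}, which is nonzero only when $k=-(n+m)$.

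Substituting $k=-(n+m)$ and expanding the brackets using $\omega_{A(\lambda)}(n,m)=n+m+n(n+1)\lambda\delta_m^0$, I would reduce the whole condition to the single scalar identity
\begin{equation*}
    \bigl(-n + m(m+1)\lambda\delta_{n+m}^0\bigr)\beta_\lambda(n) - \bigl(-m + n(n+1)\lambda\delta_{n+m}^0\bigr)\beta_\lambda(m) - (m-n)\beta_\lambda(n+m) = 0,
\end{equation*}
to be checked for all $n,m\in\ZZ$ (with the factors $n(n+1)\lambda$, $m(m+1)\lambda$ replaced by $n^2$, $m^2$ and $\beta_\infty\equiv 1$ in the case $\lambda=\infty$). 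I would then split according to the Kronecker delta. When $n+m\neq 0$ the delta terms drop and the identity becomes $-n\beta_\lambda(n)+m\beta_\lambda(m)=(m-n)\beta_\lambda(n+m)$: if $n,m,n+m$ are all nonzero this is the telescoping $-n+m=m-n$ with $\beta_\lambda\equiv 1$, while if exactly one of $n,m$ is $0$ the value $\beta_\lambda(0)=\lambda+1$ appears but is killed by its prefactor $n$ or $m$, so the identity holds regardless.

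The genuinely delicate case is $m=-n$ (so $n+m=0$), where the delta terms survive and $\beta_\lambda(n+m)=\beta_\lambda(0)=\lambda+1$ enters with nonzero coefficient. Here a short expansion shows the constant part cancels by itself, while the coefficient of $\lambda$ collapses to $n(n-1)-n(n+1)+2n=0$; this cancellation is exactly what forces the value at $0$ to be $\lambda+1$ rather than $1$, which is the reason the statement is \emph{not entirely obvious}. The subcase $n=0$ is immediate since every term carries a factor of $n$. I expect this $m=-n$ case to be the only real obstacle: it is where the internal grading concentrates all the weight and where the precise normalisation $\beta_\lambda(0)=\lambda+1$ is indispensable. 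The case $\lambda=\infty$ is handled identically, and in fact more easily, since $\beta_\infty$ is constant and the corresponding $\lambda$-coefficient is replaced by the even simpler cancellation coming from $m^2=n^2$ when $m=-n$.
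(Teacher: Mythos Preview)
Your proposal is correct and follows essentially the same approach as the paper: both reduce to the scalar identity \eqref{eq:mixing cocycle A} (respectively \eqref{eq:mixing cocycle A infinity} for $\lambda=\infty$) and then check it by cases, with the only nontrivial verification occurring at $m=-n\neq 0$, where the cancellation $n(n-1)-n(n+1)+2n=0$ is exactly what pins down $\beta_\lambda(0)=\lambda+1$. One small remark: you do not need Theorem~\ref{thm:internal grading} to justify restricting to homogeneous basis triples---bilinearity alone suffices for verifying that a given map is a cocycle; the internal grading is only needed later to argue that degree-zero cocycles represent all cohomology classes.
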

\begin{proof}
    Consider $\lambda \in \CC$. In this case, the cocycle condition \eqref{eq:W_X mixing cocycle cond} gives
    \begin{equation}\label{eq:mixing cocycle A}
        (-n + m(m + 1)\lambda \delta_{n+m}^0)\beta(n) - (-m + n(n + 1)\lambda \delta_{n+m}^0)\beta(m) + (n - m)\beta(n + m) = 0.
    \end{equation}
    If $n = 0$ or $m = 0$, then \eqref{eq:mixing cocycle A} for $\beta_\lambda$ is trivially satisfied. Furthermore, if $m \neq -n$ with $n,m \in \ZZ \nonzero$, then \eqref{eq:mixing cocycle A} is once again easily checked.
    
    Therefore, it suffices to check \eqref{eq:mixing cocycle A} for $m = -n \neq 0$. In this case, we have
    \begin{align*}
        (-n + m(m + 1)\lambda \delta_{n+m}^0)\beta_0(n) - &(-m + n(n + 1)\lambda \delta_{n+m}^0)\beta_0(m) + (n - m)\beta_0(n + m) \\
        &= -n + n(n - 1)\lambda - (n + n(n + 1)\lambda) + 2(\lambda + 1)n \\
        &= -n + \lambda n^2 - \lambda n - n - \lambda n^2 - \lambda n + 2\lambda n + 2n = 0,
    \end{align*}
    as required.

    Now let $\lambda = \infty$. Here, the cocycle condition gives
    \begin{equation}\label{eq:mixing cocycle A infinity}
        (-n + m^2\delta_{n+m}^0)\beta(n) - (-m + n^2\delta_{n+m}^0)\beta(m) + (n - m)\beta(n + m) = 0.
    \end{equation}
    We need to check that $\beta_\infty$ satisfies \eqref{eq:mixing cocycle A infinity}. For $n,m \in \ZZ$, we have
    \begin{align*}
        (-n + m^2\delta_{n+m}^0)\beta_\infty(n) - &(-m + n^2\delta_{n+m}^0)\beta_\infty(m) + (n - m)\beta_\infty(n + m) \\
        &= -n + m^2\delta_{n+m}^0 - (-m + n^2\delta_{n+m}^0) + n - m \\
        &= (n + m)(m - n)\delta_{n+m}^0 = 0,
    \end{align*}
    as required.
\end{proof}

We now prove that there are no nonzero mixing coboundary functions on $\W_A(\lambda)$.

\begin{lemma}\label{lem:no mixing coboundaries}
    Let $\lambda \in \PP^1$. Then there does not exist any nonzero mixing coboundary function on $\W_A(\lambda)$. In other words, $\BMix_A(\lambda) = 0$. Consequently, $\HH_{\Mix}^2(\W_A(\lambda)) \cong \ZMix_A(\lambda)$.
\end{lemma}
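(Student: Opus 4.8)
The plan is to show directly that any mixing coboundary function must be identically zero, which by the preceding reductions amounts to a short computation on degree-zero cochains. Since we are computing $\HH^2_{\Mix}$ and have already restricted to degree-zero cochains via Theorem \ref{thm:internal grading}, the relevant coboundaries arise from degree-zero 1-cochains $\varphi \in C_0^1(\W_A(\lambda))$. A degree-zero 1-cochain $\varphi \colon \W_A(\lambda) \to \CC$ can only be nonzero on the degree-zero part of $\W_A(\lambda)$, namely $\CC L_0 \oplus \CC A_0$; so $\varphi$ is determined by two scalars $\varphi(L_0)$ and $\varphi(A_0)$, with $\varphi(L_n) = \varphi(A_n) = 0$ for $n \neq 0$.

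First I would write down $d\varphi$ on the mixing pairs $(L_n, A_m)$ and check when the result has the shape of a mixing cocycle function. By the differential \eqref{eq: LA cohomology d} for trivial coefficients, $d\varphi(L_n, A_m) = -\varphi([L_n, A_m]) = -\omega_{A(\lambda)}(n,m)\,\varphi(A_{n+m})$. For this to be a \emph{mixing} cochain, $d\varphi$ must vanish on the $(L_n,L_m)$ and $(A_n,A_m)$ pairs; the latter is automatic since $A(\lambda)$ is abelian, and $d\varphi(L_n,L_m) = -\varphi([L_n,L_m]) = -(m-n)\varphi(L_{n+m})$, which vanishes on all of $\W$ precisely when $\varphi(L_0)$ is the only surviving term and it multiplies $(m-n)\delta^0_{n+m} = 0$, so this imposes no constraint. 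Thus the mixing coboundary functions are exactly the functions $\beta(n)\delta^0_{n+m}$ arising from $d\varphi(L_n,A_m)$.

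The key observation is that $\varphi(A_{n+m})$ is nonzero only when $n+m = 0$, i.e. $m = -n$, so $d\varphi(L_n, A_m) = -\omega_{A(\lambda)}(n,-n)\,\varphi(A_0)\,\delta^0_{n+m}$. It then remains to compute $\omega_{A(\lambda)}(n,-n) = n + (-n) + n(n+1)\lambda\delta^0_{-n} = n(n+1)\lambda\delta^0_n = 0$ for $\lambda \neq \infty$ (and similarly $n + (-n) + n^2\delta^0_{-n} = 0$ for $\lambda = \infty$), since the $\delta^0_n$ forces $n = 0$, at which point $n(n+1) = 0$ anyway. Hence $d\varphi(L_n, A_m) = 0$ for all $n,m$, so every degree-zero mixing coboundary is zero, giving $\BMix_A(\lambda) = 0$ and therefore $\HH^2_{\Mix}(\W_A(\lambda)) \cong \ZMix_A(\lambda)$.

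I expect the only mild subtlety to be bookkeeping: confirming that a degree-zero $1$-cochain truly vanishes off $L_0, A_0$, and that the $\lambda = \infty$ case behaves identically. The heart of the argument is the vanishing $\omega_{A(\lambda)}(n,-n) = 0$, which reflects the fact that $L_0$ acts by the eigenvalue $n+m = 0$ on $A_0$ (up to the $\delta$-corrected term, which itself vanishes at $n=0$); this is a clean, non-obstructive computation, so there is no serious difficulty to flag.
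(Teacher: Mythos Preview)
Your core argument is correct and matches the paper's: both rest on the observation that $[L_n, A_{-n}] = 0$ for all $n$ (equivalently $\omega_{A(\lambda)}(n,-n) = 0$), so $\beta(n) = d\varphi(L_n, A_{-n}) = -\varphi([L_n, A_{-n}]) = 0$.

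There is one slip in your side discussion. You assert that $d\varphi(L_n, L_m) = -(m-n)\varphi(L_{n+m})$ reduces to a multiple of $(m-n)\delta^0_{n+m} = 0$, but when $n + m = 0$ one has $m - n = -2n$, which is nonzero for $n \neq 0$. So vanishing of $d\varphi$ on the $(L_n, L_m)$ pairs actually \emph{does} impose the constraint $\varphi(L_0) = 0$, contrary to your claim. This does not affect your conclusion, since the mixing-pair computation only uses $\varphi(A_0)$ and you have correctly shown $d\varphi(L_n, A_{-n}) = 0$ regardless. The paper's proof is slightly more streamlined: it neither restricts to degree-zero $\varphi$ nor examines the $(L_n, L_m)$ pairs, observing directly that $\beta(n) = -\varphi([L_n, A_{-n}]) = -\varphi(0) = 0$ for any $\varphi$ whatsoever.
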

\begin{proof}
    Let $\beta \in \BMix_A(\lambda)$, so there exists some $\varphi \colon \W_A(\lambda) \to \CC$ such that $d\varphi(L_n, A_{-n}) = \beta(n)$ for all $n \in \ZZ$. Then $\beta(n) = d\varphi(L_n, A_{-n}) = -\varphi([L_n, A_{-n}]) = 0$ for all $n \in \ZZ$, so $\beta = 0$.
\end{proof}

We now complete the proof of Proposition \ref{prop:W_A mixing cocycles} by proving that $\beta_\lambda$ is the unique (up to scalar multiplication) mixing cocycle function on $\W_A(\lambda)$ for $\lambda \neq 0$. Note that when $\lambda = 0$, we can apply \cite[Theorem 2.3]{GaoJiangPei}, since $A(0) \cong I(0,1)$ as representations of $\W$.

\begin{proof}[Proof of Proposition \ref{prop:W_A mixing cocycles}]
    Let $\beta \in \ZMix_A(\lambda)$. As mentioned above, the proof for $\lambda = 0$ follows by \cite[Theorem 2.3]{GaoJiangPei}: we have
    $$\dim(\ZMix_A(0)) = \dim(\HH_{\Mix}^2(\W_A(0))) = \dim(\HH^2_{\Mix} (\W(0,1))) = 2.$$
    Indeed, $\beta_0$ gives an element of $\ZMix_A(0)$ by Lemma \ref{lem:mixing cocycle function A}, while $\iota$ from Notation \ref{ntt:abelian cocycle B} is also a mixing cocycle function on $\W_A(0)$. Therefore, $\ZMix_A(0)$ is spanned by $\beta_0$ and $\iota$.
    
    Thus, we may assume that $\lambda \neq 0$. We analyse the cases $\lambda \in \CC\setminus\{0\}$ and $\lambda = \infty$ separately.

    First, consider $\lambda \in \CC \nonzero$. Let $\beta' = \beta - \beta(1)\beta_\lambda$, so that $\beta'$ is still a mixing cocycle function on $\W_A(\lambda)$ and $\beta'(1) = 0$. We claim that $\beta'(-1) = 0$.

    Assume, for a contradiction, that $\beta'(-1) \neq 0$. Rescaling $\beta'$ if necessary, we may assume that $\beta'(-1) = 1$. Taking $n = 2$ and $m = -1$ in \eqref{eq:mixing cocycle A}, we get $\beta'(2) = -\frac{1}{2}$, while $n = -2$ and $m = 1$ gives $\beta'(-2) = \frac{3}{2}$. Now we have two ways of computing $\beta'(0)$: we can substitute $n = 1, m = -1$ or $n = 2, m = -2$ into \eqref{eq:mixing cocycle A}. The former gives $\beta'(0) = \lambda + \frac{1}{2}$, while the latter gives $\beta'(0) = \frac{1}{2}(5\lambda + 1)$. Combining these two equations, we conclude that $\lambda = 0$. But we assumed that $\lambda \in \CC \nonzero$, so this is a contradiction. This proves the claim.

    Since $\beta'(-1) = 0$, we can take $n = 1$ and $m = -1$ in \eqref{eq:mixing cocycle A} to get $\beta'(0) = 0$. Letting $n \geq 2$ and $m = -1$, it follows that
    $$n\beta'(n) = (n + 1)\beta'(n - 1).$$
    By induction, we conclude that $\beta'(n) = 0$ for all $n \geq 2$. Similarly, taking $n \leq -2$ and $m = 1$, we get
    $$n\beta'(n) = (n - 1)\beta'(n + 1),$$
    so $\beta'(n) = 0$ for $n \leq -2$ by induction. Therefore, $\beta' = 0$ and $\beta = \beta(1)\beta_\lambda$, as required. This concludes the proof for $\lambda \neq \infty$.

    Now consider $\lambda = \infty$, and let $\beta' = \beta - \beta(0)\beta_\infty$. Then $\beta' \in \ZMix_A(\infty)$ and $\beta'(0) = 0$. We claim that $\beta'(1) = 0$.

    Assume, for a contradiction, that $\beta'(1) \neq 0$. Rescaling $\beta'$ if necessary, we may assume that $\beta'(1) = 1$. Taking $n = 1$ and $m = -1$ in \eqref{eq:mixing cocycle A infinity}, we get $-2\beta'(-1) = 0$, so that $\beta'(-1) = 0$. Now, substituting $n = 2$ and $m = -1$ in \eqref{eq:mixing cocycle A infinity} gives $\beta'(2) = \frac{3}{2}$, while taking $n = -2$ and $m = 1$ gives $\beta'(-2) = -\frac{1}{2}$.

    Finally, consider $n = 2$ and $m = -2$ in \eqref{eq:mixing cocycle A infinity}:
    $$(-2 + 2^2)\beta'(2) - (2 + 2^2)\beta'(-2) + 4\beta'(0) = 3 + 3 + 0 = 6 \neq 0,$$
    a contradiction. This proves the claim.

    Since $\beta'(1) = 0$, we can take $n = 1$ and $m = -1$ in \eqref{eq:mixing cocycle A infinity} to get $-2\beta'(-1) = 0$, so that $\beta'(-1) = 0$. Taking $n \geq 2$ and $m = -1$ in \eqref{eq:mixing cocycle A infinity}, we get
    $$n \beta'(n) = (n + 1)\beta'(n - 1).$$
    Therefore, it follows by induction that $\beta'(n) = 0$ for all $n \geq 2$. Similarly, if we take $n \leq -2$ and $m = 1$ in \eqref{eq:mixing cocycle A infinity}, we get
    $$n\beta'(n) = (n - 1)\beta'(n + 1),$$
    so we get $\beta'(n) = 0$ for all $n \leq -2$. We conclude that $\beta' = 0$, so $\beta = \beta(0) \beta_\infty$, as required.
\end{proof}

The final goal of this section is to compute $\HH_{\Mix}^2(\W_B(\lambda))$. Note that, unlike $\W_A(\lambda)$, the Lie algebra $\W_B(\lambda)$ does have nonzero mixing coboundary functions for all $\lambda \in \PP^1$.

We start by introducing notation for some mixing cocycle functions on $\W_B(\lambda)$ and stating the result on mixing cocycles of $\W_B(\lambda)$.

\begin{Notation}\label{ntt:mixing cocycles B}
    Let $\lambda \in \PP^1$. Define $\eta_\lambda, \gamma_1, \gamma_2 \colon \ZZ \to \CC$ as follows:
    $$\eta_\lambda(n) = n + n(n + 1)\lambda, \quad \gamma_1(n) = n, \quad \gamma_2(n) = n^2$$
    for all $n \in \ZZ$, where $n(n + 1)\lambda$ is understood to be $n^2$ if $\lambda = \infty$. In other words, $\eta_\lambda = (\lambda + 1) \gamma_1 + \lambda \gamma_2$ if $\lambda \neq \infty$ and $\eta_\infty = \gamma_1 + \gamma_2$.
\end{Notation}

\begin{proposition}\label{prop:W_B mixing cocycles}
    Let $\lambda \in \PP^1$. Then $\ZMix_B(\lambda) = \CC \gamma_1 \oplus \CC \gamma_2$, while $\BMix_B(\lambda) = \CC \eta_\lambda$, where $\gamma_1,\gamma_2$ and $\eta_\lambda$ are as in Notation \ref{ntt:mixing cocycles B}. In other words, if $\Omega$ is a mixing cocycle of degree 0 on $\W_B(\lambda)$, then
    $$\Omega(L_n,B_m) = (an^2 + bn)\delta_{n+m}^0$$
    for some $a,b \in \CC$. Furthermore, if $\lambda \neq \infty$, then $\Omega$ is a coboundary if and only if there exists $c \in \CC$ such that $a = c\lambda$ and $b = c(\lambda + 1)$, while if $\lambda = \infty$, then $\Omega$ is a coboundary if and only if $a = b$.
    Consequently,
    $$\dim(\HH_{\Mix}^2(\W_B(\lambda))) = \dim(\ZMix_B(\lambda)) - \dim(\BMix_B(\lambda)) = 1.$$
\end{proposition}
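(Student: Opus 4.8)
The plan is to translate the abstract cocycle condition \eqref{eq:W_X mixing cocycle cond} into an explicit functional equation for $\beta$, and then to determine $\ZMix_B(\lambda)$ and $\BMix_B(\lambda)$ separately. Substituting the brackets of $\W_B(\lambda)$ into \eqref{eq:W_X mixing cocycle cond} and using $\Omega(L_n,B_m) = \beta(n)\delta_{n+m}^0$, I would obtain the condition
\begin{equation*}
    \bigl(-(n+m) - m(m+1)\lambda\delta_n^0\bigr)\beta(n) + \bigl((n+m) + n(n+1)\lambda\delta_m^0\bigr)\beta(m) + (n-m)\beta(n+m) = 0
\end{equation*}
for all $n,m \in \ZZ$, where $m(m+1)\lambda$ and $n(n+1)\lambda$ are read as $m^2$ and $n^2$ when $\lambda = \infty$. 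The first task is to verify that $\gamma_1$ and $\gamma_2$ satisfy this. In the generic regime $n,m,n+m \neq 0$ all three $\delta$-terms vanish and the condition collapses to $(n+m)[\beta(m) - \beta(n)] + (n-m)\beta(n+m) = 0$, which one checks directly for $\beta(n) = n$ and $\beta(n) = n^2$; in the degenerate cases ($n=0$, $m=0$, or $n+m=0$) every surviving term carries a factor $\beta(0)$, which vanishes for both $\gamma_i$. Since $\gamma_1,\gamma_2$ are linearly independent, this shows $\dim \ZMix_B(\lambda) \geq 2$.

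For the reverse inequality I would take an arbitrary $\beta \in \ZMix_B(\lambda)$ and first extract $\beta(0) = 0$: setting $n=0$ with a suitable $m \neq 0$ makes the two middle terms cancel, leaving a single nonvanishing multiple of $\beta(0)$. Next, subtract the unique combination $a\gamma_1 + b\gamma_2$ agreeing with $\beta$ at $n=1$ and $n=2$, yielding $\beta' \in \ZMix_B(\lambda)$ with $\beta'(0) = \beta'(1) = \beta'(2) = 0$. Feeding $m=1$ into the generic condition gives the recursion $(n-1)\beta'(n+1) = (n+1)\beta'(n)$, which propagates $\beta'(n) = 0$ to all $n \geq 2$ starting from the seed $\beta'(2) = 0$. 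To reach negative degrees I would first obtain $\beta'(-1) = 0$ from the substitution $n=-1,\ m=2$, and then run the same relation downward to kill all $\beta'(n)$ with $n \leq -1$. Hence $\beta' = 0$ and $\ZMix_B(\lambda) = \CC\gamma_1 \oplus \CC\gamma_2$.

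To compute $\BMix_B(\lambda)$, I would use Theorem \ref{thm:internal grading} to restrict to degree-$0$ coboundaries: a degree-$0$ $1$-cochain $\varphi$ is supported on $\W_B(\lambda)_0 = \CC L_0 \oplus \CC B_0$, so it is determined by the scalars $\varphi(L_0)$ and $\varphi(B_0)$. Since $d\varphi(x,y) = -\varphi([x,y])$ for trivial coefficients, the abelian component vanishes automatically (as $B(\lambda)$ is abelian), while the Virasoro component $d\varphi(L_n,L_{-n}) = 2n\varphi(L_0)$ must vanish for $d\varphi$ to be purely mixing, forcing $\varphi(L_0) = 0$. Evaluating the remaining component gives $d\varphi(L_n,B_{-n}) = \bigl(n + n(n+1)\lambda\bigr)\varphi(B_0) = \eta_\lambda(n)\varphi(B_0)$, so $\BMix_B(\lambda) = \CC\eta_\lambda$, which is one-dimensional since $\eta_\lambda(-1) = -1 \neq 0$. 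Writing $\eta_\lambda = \lambda\gamma_2 + (\lambda+1)\gamma_1$ (respectively $\gamma_1 + \gamma_2$ when $\lambda = \infty$) recovers the stated coboundary criterion on $(a,b)$, and subtracting dimensions yields $\dim \HH_{\Mix}^2(\W_B(\lambda)) = 2 - 1 = 1$.

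The main obstacle I anticipate is the spanning argument for $\ZMix_B(\lambda)$: the recursion coming from $m=1$ degenerates precisely at $n=1$ (its coefficient $n-1$ vanishes there), so a single normalisation $\beta'(1) = 0$ does not suffice, and one must arrange a second seed at $n=2$ together with a separate seed $\beta'(-1) = 0$ before the induction closes in both directions. The fiddly bookkeeping lies in checking that each substitution stays in the generic, delta-free regime so that the clean form of the condition applies; the $\lambda = \infty$ case must be carried alongside throughout, though it runs entirely in parallel.
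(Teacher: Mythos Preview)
Your plan is correct and mirrors the paper's proof almost step for step: the paper likewise extracts $\beta(0)=0$, reduces to the $\lambda$-independent relation $(n+m)(\beta(m)-\beta(n))+(n-m)\beta(n+m)=0$, normalises so that $\beta'(1)=\beta'(2)=0$, runs the $m=1$ recursion $(n-1)\beta'(n+1)=(n+1)\beta'(n)$ upward, uses the substitution $(n,m)=(-1,2)$ to obtain the negative seed $\beta'(-1)=0$, and computes $\BMix_B(\lambda)$ by the same direct evaluation of $d\varphi$. One small slip to fix: your check that $\eta_\lambda\neq 0$ via $\eta_\lambda(-1)=-1$ breaks at $\lambda=\infty$, where $\eta_\infty(-1)=(-1)+(-1)^2=0$; use, say, $\eta_\infty(1)=2$ there instead.
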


We first compute the space of mixing coboundary functions on $\W_B(\lambda)$.

\begin{lemma}
    Let $\lambda \in \PP^1$. Then $\BMix_B(\lambda) = \CC \eta_\lambda$, where $\eta_\lambda$ is as in Notation \ref{ntt:mixing cocycles B}. In particular, the mixing cocycle on $\W_B(\lambda)$ defined by
    $$\Omega(L_n,B_m) = (n + n(n + 1)\lambda)\delta_{n+m}^0$$
    is a coboundary.
\end{lemma}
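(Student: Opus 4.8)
The plan is to compute directly with degree-zero $1$-cochains, exploiting Theorem~\ref{thm:internal grading}. Since $\W_B(\lambda)$ is internally graded and the coefficient module $\CC$ is concentrated in degree zero, it suffices to consider $1$-cochains $\varphi \in C_0^1(\W_B(\lambda))$; such a $\varphi$ vanishes on every homogeneous basis element of nonzero degree, so it is determined by the two scalars $\varphi(L_0)$ and $\varphi(B_0)$. With trivial coefficients the differential is simply $d\varphi(x,y) = -\varphi([x,y])$, so the whole computation reduces to reading off brackets of $\W_B(\lambda)$.

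First I would impose that $d\varphi$ is genuinely a \emph{mixing} cocycle. The condition $d\varphi(B_n,B_m) = -\varphi([B_n,B_m]) = 0$ holds automatically since $B(\lambda)$ is abelian. The condition $d\varphi(L_n,L_m) = -(m-n)\varphi(L_{n+m}) = 0$, on the other hand, is nontrivial only in degree zero, where it reads $-(m-n)\varphi(L_0)\delta_{n+m}^0$; taking $m = -n \neq 0$ forces $\varphi(L_0) = 0$. Thus any $\varphi$ giving rise to a mixing coboundary may be assumed to satisfy $\varphi(L_0) = 0$, and only $\varphi(B_0)$ survives.

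It then remains to evaluate the mixing component. Using $\omega_{B(\lambda)}(n,-n) = -n - n(n+1)\lambda$ (and $-n - n^2$ when $\lambda = \infty$), I compute
$$d\varphi(L_n,B_{-n}) = -\omega_{B(\lambda)}(n,-n)\varphi(B_0) = \big(n + n(n+1)\lambda\big)\varphi(B_0) = \varphi(B_0)\,\eta_\lambda(n),$$
with the usual convention $n(n+1)\lambda = n^2$ at $\lambda = \infty$. Hence every mixing coboundary function is a scalar multiple of $\eta_\lambda$, and conversely $\eta_\lambda$ itself arises by taking $\varphi(B_0) = 1$. Since $\eta_\lambda(n) = n\big(1 + (n+1)\lambda\big)$ is not identically zero for any $\lambda \in \PP^1$, this gives $\BMix_B(\lambda) = \CC\eta_\lambda$, as claimed. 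The computation is entirely routine; the only points requiring care are the reduction to degree-zero cochains (justified by Theorem~\ref{thm:internal grading}) and the observation that demanding a \emph{mixing} coboundary, rather than an arbitrary one, is exactly what eliminates the $\varphi(L_0)$ contribution.
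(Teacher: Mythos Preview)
Your proof is correct and follows essentially the same approach as the paper. The only difference is that you invoke Theorem~\ref{thm:internal grading} to reduce immediately to degree-zero $1$-cochains (determined by $\varphi(L_0)$ and $\varphi(B_0)$), whereas the paper takes an arbitrary $\psi \in \W_B(\lambda)^*$ and derives by hand that $\psi(L_n)=0$ for all $n$ and $\psi(B_n)=0$ for $n\neq 0$ from the mixing conditions; after that reduction, both arguments compute $d\varphi(L_n,B_{-n})=\varphi(B_0)\eta_\lambda(n)$ identically.
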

\begin{proof}
    Let $\chi \in \BMix_B(\lambda)$. Let $\psi \colon \W_B(\lambda) \to \CC$ be a linear map giving rise to the mixing coboundary defined by $\chi$. In other words, $\psi \in \W_B(\lambda)^*$ has the property that
    $$d\psi(L_n,L_m) = d\psi(B_n,B_m) = 0, \quad d\psi(L_n,B_m) = \chi(n)\delta_{n+m}^0$$
    for all $n,m \in \ZZ$. This implies that, for all $n,m \in \ZZ$,
    $$0 = d\psi(L_n,L_m) = (n - m)\psi(L_{n+m}),$$
    so $\psi(L_n) = 0$ for all $n \in \ZZ$.
    
    If $n,m \in \ZZ$ with $m \neq -n$, we have
    $$0 = d\psi(L_n,B_m) = -\psi([L_n,B_m]) = -m\psi(B_{n+m}).$$
    Therefore, $\psi(B_n) = 0$ for all $n \in \ZZ \nonzero$. Hence, the only possibly nonzero value of $\psi$ is $\psi(B_0)$. Then, for all $n \in \ZZ$, we have
    $$\chi(n) = d\psi(L_n,B_{-n}) = -\psi([L_n,B_{-n}]) = (n + n(n + 1)\lambda)\psi(B_0) = \psi(B_0)\eta_\lambda(n),$$
    where, as before, $n(n + 1)\lambda$ is understood to be $n^2$ if $\lambda = \infty$. It follows that $\chi = \psi(B_0)\eta_\lambda$, which concludes the proof.
\end{proof}

We now compute $\ZMix_B(\lambda)$. For $\lambda \in \PP^1$ and $\beta \in \ZMix_B(\lambda)$, the cocycle condition \eqref{eq:W_X mixing cocycle cond} gives
\begin{equation}\label{eq:mixing cocycle B}
    (-(n + m) - m(m + 1)\lambda \delta_n^0)\beta(n) + (n + m + n(n + 1)\lambda \delta_m^0)\beta(m) + (n - m)\beta(n + m) = 0,
\end{equation}
for all $n,m \in \ZZ$, where once again $n(n + 1)\lambda$ is understood to be $n^2$ when $\lambda = \infty$.

Suppose $m = -n \in \ZZ \nonzero$. Then \eqref{eq:mixing cocycle B} becomes $2n\beta(0) = 0$, and thus $\beta(0) = 0$. It follows that $n(n + 1)\lambda \delta_m^0 \beta(m) = 0$ for all $n,m \in \ZZ$, so \eqref{eq:mixing cocycle B} becomes
\begin{equation}\label{eq:simple mixing cocycle B}
    (n + m)(\beta(m) - \beta(n)) + (n - m)\beta(n + m) = 0.
\end{equation}
Therefore, the space of mixing cocycle functions of $\W_B(\lambda)$ does not depend on $\lambda$. Given \eqref{eq:simple mixing cocycle B}, we immediately get two possible forms that $\beta$ can take.

\begin{lemma}\label{lem:obvious cocycles B}
    Let $\lambda \in \PP^1$. Then $\gamma_1, \gamma_2 \in \ZMix_B(\lambda)$. In other words, there are mixing cocycles $\Omega_1, \Omega_2 \in Z^2(\W_B(\lambda))$ defined by
    $$\Omega_1(L_n,B_m) = n\delta_{n+m}^0, \quad \Omega_2(L_n,B_m) = n^2\delta_{n+m}^0$$
    for all $n,m \in \ZZ$.
\end{lemma}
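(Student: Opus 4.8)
The plan is to appeal directly to the reduction carried out immediately before the lemma: a degree-zero mixing cocycle function $\beta$ on $\W_B(\lambda)$ is precisely a map $\beta \colon \ZZ \to \CC$ with $\beta(0) = 0$ satisfying \eqref{eq:simple mixing cocycle B} for all $n,m \in \ZZ$. Both $\gamma_1(n) = n$ and $\gamma_2(n) = n^2$ vanish at $n = 0$, so it remains only to verify that each satisfies \eqref{eq:simple mixing cocycle B}; this is a direct substitution.

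For $\gamma_1$, I would substitute $\beta = \gamma_1$ into the left-hand side of \eqref{eq:simple mixing cocycle B} to obtain
$$(n+m)(m-n)+(n-m)(n+m) = (n+m)\big[(m-n)+(n-m)\big] = 0.$$
For $\gamma_2$, the analogous substitution gives, using $m^2 - n^2 = (m-n)(m+n)$,
\begin{align*}
    (n+m)(m^2-n^2)+(n-m)(n+m)^2 &= (n+m)^2(m-n)+(n-m)(n+m)^2 \\
    &= (n+m)^2\big[(m-n)+(n-m)\big] = 0.
\end{align*}
In both cases the vanishing comes from the same mechanism: the common factor $(n+m)$ (respectively $(n+m)^2$) pulls out, leaving the manifestly zero bracket $(m-n)+(n-m)$.

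There is essentially no obstacle here; the only point requiring care is that satisfying the reduced equation \eqref{eq:simple mixing cocycle B} genuinely suffices, rather than the full cocycle condition \eqref{eq:mixing cocycle B}. This is guaranteed by the preceding analysis, which showed that once $\beta(0) = 0$ the $\lambda$-dependent terms $m(m+1)\lambda\delta_n^0\beta(n)$ and $n(n+1)\lambda\delta_m^0\beta(m)$ drop out of \eqref{eq:mixing cocycle B} identically. Since $\gamma_1(0) = \gamma_2(0) = 0$, this hypothesis is met, and the verification of \eqref{eq:simple mixing cocycle B} completes the proof that $\gamma_1, \gamma_2 \in \ZMix_B(\lambda)$ for every $\lambda \in \PP^1$.
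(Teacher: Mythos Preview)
Your proof is correct and takes essentially the same approach as the paper, which simply states that the result ``follows immediately from \eqref{eq:simple mixing cocycle B}.'' You have merely made explicit the substitution that the paper leaves to the reader, together with the observation that $\gamma_1(0)=\gamma_2(0)=0$ ensures the reduced equation \eqref{eq:simple mixing cocycle B} is equivalent to the full cocycle condition \eqref{eq:mixing cocycle B}.
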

\begin{proof}
    Follows immediately from \eqref{eq:simple mixing cocycle B}.
\end{proof}

We now prove Proposition \ref{prop:W_B mixing cocycles} by showing that $\gamma_1$ and $\gamma_2$ span $\ZMix_B(\lambda)$. Recall that there is a linear combination of $\gamma_1$ and $\gamma_2$ which gives a coboundary function: we have $(\lambda + 1)\gamma_1 + \lambda \gamma_2 = \eta_\lambda \in \BMix_B(\lambda)$ for $\lambda \in \CC$, while $\gamma_1 + \gamma_2 = \eta_\infty \in \BMix_B(\infty)$.

\begin{proof}[Proof of Proposition \ref{prop:W_B mixing cocycles}]
    As mentioned above, it is enough to prove that $\ZMix_B(\lambda)$ is two-dimensional. We claim that $\gamma_1$ and $\gamma_2$ form a basis for $\ZMix_B(\lambda)$.
    
    Let $\beta \in \ZMix_B(\lambda)$ and define
    $$\beta' = \beta + \beta(1)(\gamma_2 - 2\gamma_1) - \frac{\beta(2)}{2}(\gamma_2 - \gamma_1),$$
    so $\beta' \in \ZMix_B(\lambda)$. Note that $\beta'(1) = \beta'(2) = 0$. Since $\beta'$ is a mixing cocycle function for $\W_B(\lambda)$, the function $\beta'$ satisfies \eqref{eq:simple mixing cocycle B}. Substituting $m = 1$ into \eqref{eq:simple mixing cocycle B} for $\beta'$, we get
    \begin{equation}\label{eq:beta' induction}
        (n - 1)\beta'(n + 1) = (n + 1)\beta'(n)
    \end{equation}
    for all $n \in \ZZ$. Since $\beta'(2) = 0$, applying \eqref{eq:beta' induction} inductively implies that $\beta'(n) = 0$ for all $n \in \NN$.
    
    Now, substituting $n = -1, m = 2$ into \eqref{eq:simple mixing cocycle B} for $\beta'$, we get $\beta'(-1) = 0$. Therefore, applying \eqref{eq:beta' induction} inductively, we get $\beta'(n) = 0$ for all $n \in \ZZ$. Therefore, $\beta' = 0$, which means that $\beta$ is a linear combination of $\gamma_1$ and $\gamma_2$, proving the claim.
\end{proof}

Combining Propositions \ref{prop:W_A mixing cocycles} and \ref{prop:W_B mixing cocycles}, we have proved Proposition \ref{prop:W_X mixing cocycles}. We now put together all our previous computations to prove Theorem \ref{thm:main}.

\begin{proof}[Proof of Theorem \ref{thm:main}]
    We will use the decomposition
    $$\HH^2(\W_X(\lambda)) = \HH_{\Vir}^2(\W_X(\lambda)) \oplus \HH_{\Ab}^2(\W_X(\lambda)) \oplus \HH_{\Mix}^2(\W_X(\lambda)).$$
    As stated before, $\HH_{\Vir}^2(\W_X(\lambda))$ is one-dimensional, spanned by the Virasoro cocycle $\overline{\Omega}_{\Vir}$.
    
    We now analyse $\W_A(\lambda)$. By Proposition \ref{prop:W_A has no abelian cocycles}, $\HH_{\Ab}^2(\W_A(\lambda)) = 0$. Note that, for $n,m \in \ZZ$, we have $\Omega_0^A(L_n,A_m) = \iota(n)\delta_{n+m}^0$ and $\Omega_{\Mix}^A(L_n,A_m) = \beta_\lambda(n)\delta_{n+m}^0$. Therefore, Proposition \ref{prop:W_A mixing cocycles} implies that
    $$\HH_{\Mix}^2(\W_A(\lambda)) =
    \begin{cases}
        \CC \overline{\Omega}_0^A \oplus \CC \overline{\Omega}_{\Mix}^A, &\text{if } \lambda=0.\\
        \CC \overline{\Omega}_{\Mix}^A, &\text{if } \lambda \neq 0.
    \end{cases}$$
    This completes the proof for $\W_A(\lambda)$. We now move on to $\W_B(\lambda)$.

    Note that $\Omega_{\Ab}^B(B_n,B_m) = \iota(n)\delta_{n+m}^0$, so Proposition \ref{prop:W_B abelian cocycles} implies that $\HH_{\Ab}^2(\W_B(\lambda)) = \CC \overline{\Omega}_{\Ab}^B$.

    For the mixing cocycle, we can take the 2-cocycle induced by the function
    \begin{align*}
        \theta_\lambda \colon \ZZ &\to \CC \\
        n &\mapsto \begin{cases}
            n, &\text{if } \lambda \neq 0, \\
            n(n + 1), &\text{if } \lambda = 0,
        \end{cases}
    \end{align*}
    as the representative in cohomology, since Proposition \ref{prop:W_B mixing cocycles} implies that $\theta_\lambda \notin \BMix_B(\lambda)$. Now, $\theta_\lambda$ is a mixing cocycle function which is not a coboundary function and
    $$\Omega_{\Mix}^B(L_n,B_m) = \theta_\lambda(n)\delta_{n+m}^0,$$
    so $\Omega_{\Mix}^B$ has nonzero image in $\HH^2(\W_B(\lambda))$. Since $\HH_{\Mix}^2(\W_B(\lambda))$ is one-dimensional by Proposition \ref{prop:W_B mixing cocycles}, it follows that $\HH_{\Mix}^2(\W_B(\lambda)) = \CC \overline{\Omega}_{\Mix}^B$, which finishes the proof.
\end{proof}

\section{Leibniz central extensions}\label{sec:Leibniz}

The study of Leibniz cohomology was pioneered by the works of Loday and Pirashvili \cite{LodayPirashvili, LodayBook} as a non-commutative analogue of Chevalley--Eilenberg cohomology. Leibniz cohomology can be defined for a more general class of objects called Leibniz algebras, of which Lie algebras are a special case. Analogously to Lie central extensions, the second Leibniz cohomology $\HL^2$ with coefficients in $\CC$ is in one-to-one correspondence with equivalence classes of one-dimensional Leibniz central extensions. We refer the reader to \cite{LodayPirashvili} and \cite{FeldvossWagemann} for a more thorough introduction to Leibniz algebras than the one presented here.

\begin{defn}
    A \emph{right Leibniz algebra} $\mathfrak{l}$ is a vector space with a bilinear map $[-,-] \colon \mathfrak{l} \times \mathfrak{l} \rightarrow \CC$ satisfying the Leibniz identity
    $$[[x,y],z] = [x,[y,z]] + [[x,z],y]$$
    for all $x,y,z \in \mathfrak{l}$. In other words, taking brackets on the right with an element $z \in \mathfrak{l}$ is a derivation of $\mathfrak{l}$.

    A \emph{Leibniz 2-cocycle} on $\mathfrak{l}$ is a bilinear map $\alpha \colon \mathfrak{l} \times \mathfrak{l} \to \CC$ such that
    \begin{equation}\label{eq:Leibniz cocycle}
        \alpha([x,y],z) = \alpha(x,[y,z]) + \alpha([x,z],y)
    \end{equation}
    for all $x,y,z \in \mathfrak{l}$.
\end{defn}

\begin{remark}
    An analogous definition exists for left Leibniz algebras, but we only consider right Leibniz algebras to maintain consistency with \cite{GaoJiangPei}. When the product is alternating (i.e. $[x,x] = 0$ for all $x \in \mathfrak{l}$), then the Leibniz identity is equivalent to the Jacobi identity and $\mathfrak{l}$ is a Lie algebra.

    We also note that if a Leibniz 2-cocycle is alternating (in other words, if $\alpha \in \Hom(\extp^2\mathfrak{l},\CC)$), then the Leibniz cocycle condition \eqref{eq:Leibniz cocycle} is equivalent to the Chevalley--Eilenberg cocycle condition of Lie algebra cohomology.
\end{remark}

In this section, we compute the inequivalent Leibniz central extensions of $\W_X(\lambda)$, given by the second Leibniz cohomology $\HL^2(\W_X (\lambda))$. The main result of this section is summarised in the following theorem.
\begin{theorem}\label{thm: second Leibniz cohomology}
    Let $\lambda \in \PP^1$. We have
    \begin{align*}
        \dim(\HL^2(\W_A(\lambda)))&= \dim(\HH^2(\W_A(\lambda)))+ 1 =\begin{cases}
            4, \quad \lambda = 0,\\
            3, \quad \lambda \neq 0.
        \end{cases}\\
        \dim(\HL^2(\W_B(\lambda)))&= \dim(\HH^2(\W_B(\lambda))) = 3,
    \end{align*}
    In particular,
    \begin{align*}
        \HL^2(\W_A(\lambda))&= \HH^2(\W_A(\lambda)) \oplus \CC \overline{\theta}_A \\
        \HL^2(\W_B(\lambda))&= \HH^2(\W_B(\lambda)),
    \end{align*}
    where $\overline{\theta}_A$ is the image of the Leibniz 2-cocycle defined by the symmetric, invariant bilinear form
    $$\theta_A(A_n,A_m)=\begin{cases}
        1, \quad n=m=0,\\
        0, \quad \text{otherwise}
    \end{cases}$$
    in $\HL^2(\W_A(\lambda))$.
\end{theorem}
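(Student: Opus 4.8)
The plan is to reduce the computation of $\HL^2(\W_X(\lambda))$ to the Lie algebra computation already carried out in Theorem \ref{thm:main}, together with a computation of symmetric invariant bilinear forms. First I would split an arbitrary Leibniz $2$-cochain $\alpha$ into its antisymmetric and symmetric parts, $\alpha = \alpha_a + \alpha_s$. Since a Leibniz coboundary has the form $d\varphi(x,y) = -\varphi([x,y])$ it is antisymmetric, so coboundaries never affect $\alpha_s$; hence $[\alpha] \mapsto \alpha_s$ is a well-defined linear map on $\HL^2(\W_X(\lambda))$. Its kernel consists of classes represented by antisymmetric cocycles, and for antisymmetric forms the Leibniz identity \eqref{eq:Leibniz cocycle} is exactly the Chevalley--Eilenberg cocycle condition, so the kernel is precisely $\HH^2(\W_X(\lambda))$. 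The dimension count then reduces to identifying the image.

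The crux is a structural lemma. By symmetrising and antisymmetrising \eqref{eq:Leibniz cocycle} in its first two arguments, I would show that $\alpha_s$ is forced to be an \emph{invariant} symmetric bilinear form, i.e. $\alpha_s([x,y],z) + \alpha_s(y,[x,z]) = 0$, while $\alpha_a$ must satisfy the Chevalley--Eilenberg equation up to the correction term $(d\alpha_a)(x,y,z) = -\alpha_s([x,y],z)$. Thus the image of $[\alpha]\mapsto\alpha_s$ lands in the space of symmetric invariant bilinear forms, and a given such form is attained precisely when the Cartan $3$-form $(x,y,z)\mapsto\alpha_s([x,y],z)$ is a Lie $3$-coboundary. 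In particular, any symmetric invariant form vanishing on $[\W_X(\lambda),\W_X(\lambda)]$ is already a Leibniz $2$-cocycle (take $\alpha_a = 0$) and so lies in the image.

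It remains to compute the symmetric invariant bilinear forms $\theta$ on $\W_X(\lambda)$. Exactly as for Lie cocycles, invariance under $\ad_{L_0}$ gives $(\deg x + \deg y)\theta(x,y) = 0$, so $\theta$ is supported on degree-$0$ pairs and is determined by the three functions $n\mapsto\theta(L_n,L_{-n})$, $n\mapsto\theta(L_n,X_{-n})$, and $n\mapsto\theta(X_n,X_{-n})$. Feeding $w = L_k$ into the invariance relation produces scalar recurrences identical in form to the abelian and mixing cocycle equations solved in Section \ref{sec:central extensions}, which is the concrete manifestation of $A(\lambda)$ being the adjoint of $B(\lambda)$. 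The $LL$-part vanishes, since the Witt algebra admits no nonzero symmetric invariant form, and the symmetry constraint $\theta(X_n,X_{-n}) = \theta(X_{-n},X_n)$ eliminates the odd solutions of these recurrences. For $\W_B(\lambda)$ this forces $\theta = 0$, while for $\W_A(\lambda)$ the only surviving degree of freedom is the value $\theta(A_0,A_0)$, giving $\CC\theta_A$.

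Finally I would check that $\theta_A$ yields a genuinely new class: it vanishes on $[\W_A(\lambda),\W_A(\lambda)] = \W\ltimes(\W\cdot A(\lambda))$, which omits $A_0$, so its Cartan $3$-form is identically zero and $\theta_A$ is an honest Leibniz $2$-cocycle; being symmetric and nonzero, it is not a coboundary. Combining the exact sequence $0 \to \HH^2(\W_X(\lambda)) \to \HL^2(\W_X(\lambda)) \to \{\text{symmetric invariant forms}\}$ with the two computations yields $\dim\HL^2(\W_A(\lambda)) = \dim\HH^2(\W_A(\lambda)) + 1$ and $\dim\HL^2(\W_B(\lambda)) = \dim\HH^2(\W_B(\lambda))$, as claimed. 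I expect the main obstacle to be the structural lemma of the second paragraph: correctly pinning down how $\alpha_s$ constrains, and is constrained by, $\alpha_a$ through the coupling term $d\alpha_a = -\alpha_s([-,-],-)$. This is exactly the point at which \cite{GaoJiangPei} overcounted (Remark \ref{rem:error}), so care is needed to ensure that every symmetric invariant form in the image is counted once and only once, with no spurious classes arising from the correction term.
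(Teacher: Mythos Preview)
Your proposal is correct and follows essentially the same route as the paper. The paper invokes the exact sequence $0 \to \HH^2(\g) \hookrightarrow \HL^2(\g) \xrightarrow{\phi} \Inv(\g) \xrightarrow{h} \HH^3(\g)$ from \cite{HuPeiLiu} (Proposition \ref{prop:Leibniz InvForm exact sequence}) as a black box, whereas you rederive it by hand via the symmetric/antisymmetric splitting; your ``structural lemma'' is exactly that proposition. From there both arguments reduce to computing $\Inv(\W_X(\lambda))$ and checking that $\theta_A$ lies in the image of $\phi$: the paper does the former by evaluating invariance on specific triples (Lemma \ref{lem: components of invariants forms on W_X()} and Proposition \ref{prop:InvForm}) rather than by matching against the recurrences of Section \ref{sec:central extensions}, but the outcome and the verification for $\theta_A$ are the same.
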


We start by stating a useful result for computing Leibniz cohomology of Lie algebras.

\begin{proposition}[{\cite[Proposition 3.2]{HuPeiLiu}}]\label{prop:Leibniz InvForm exact sequence}
Let $\Inv(\g)$ be the space of symmetric invariant bilinear forms on a Lie algebra $\g$. Then there exists an exact sequence
    \begin{equation*}
        \begin{tikzcd}
            0\arrow[r] & \HH^2(\g) \arrow[r, hookrightarrow, "\iota"] & \HL^2(\g) \arrow[r, "\phi"] & \Inv(\g) \arrow[r, "h"] & \HH^3(\g),
        \end{tikzcd}
    \end{equation*}
    where $\phi(\chi)(x,y) = \chi(x,y) + \chi(y,x)$ and $h(\theta)(x,y,z) = \theta([x,y],z)$ for all $\chi \in \HL^2(\g)$, $\theta \in \Inv(\g)$, and $x,y \in \g$. The map $h$ is known as the \emph{Cartan--Koszul map}.
\end{proposition}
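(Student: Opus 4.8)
The plan is to work directly with the degree-two cochains of both theories and exploit the decomposition of a bilinear form into symmetric and antisymmetric parts. Write $\Hom(\g\otimes\g,\CC)=\extp^2\g^*\oplus S^2\g^*$, and for a bilinear form $\chi$ let $\chi^-(x,y)=\tfrac12(\chi(x,y)-\chi(y,x))$ and $\chi^+(x,y)=\tfrac12(\chi(x,y)+\chi(y,x))$ be its antisymmetric and symmetric parts, so that $\phi(\chi)=2\chi^+$. Since $\Hom(\g,\CC)=\g^*$ is the space of $1$-cochains for both theories, and the Leibniz and Chevalley--Eilenberg coboundaries of a $1$-cochain $\psi$ both equal the antisymmetric form $(x,y)\mapsto-\psi([x,y])$, the two spaces of $2$-coboundaries coincide. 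Combined with the Remark that an alternating Leibniz $2$-cocycle is exactly a Chevalley--Eilenberg $2$-cocycle, this shows that $\iota$ is well defined, and that $\iota[\omega]=0$ forces $\omega$ to be a Chevalley--Eilenberg coboundary; hence $\iota$ is injective, giving exactness at $\HH^2(\g)$.

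The technical heart is the following characterisation, which I would prove by direct manipulation of the Leibniz identity \eqref{eq:Leibniz cocycle}: a bilinear form $\chi$ is a Leibniz $2$-cocycle if and only if $\chi^+\in\Inv(\g)$ and $d\chi^-=h(\chi^+)$, where $h(\theta)(x,y,z)=\theta([x,y],z)$ and $d$ is the Chevalley--Eilenberg differential \eqref{eq: LA cohomology d} with trivial coefficients. First I would show that the symmetrisation of any Leibniz cocycle is invariant: applying \eqref{eq:Leibniz cocycle} to the triples $(x,y,z)$ and $(z,x,y)$ and using antisymmetry of the bracket collapses $\phi(\chi)([x,y],z)-\phi(\chi)(x,[y,z])$ to zero. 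Granting invariance of $\chi^+$, substituting $\chi=\chi^-+\chi^+$ into \eqref{eq:Leibniz cocycle} and cancelling the invariant symmetric terms reduces the remaining identity precisely to $d\chi^-=h(\chi^+)$, after observing that $h(\theta)$ is alternating for invariant symmetric $\theta$ (so that $h(\theta)(x,z,y)=-h(\theta)(x,y,z)$). I would also record here the standard fact that $h(\theta)$ is a genuine $3$-cocycle for every $\theta\in\Inv(\g)$ — the Cartan $3$-form is closed — which is what makes $h\colon\Inv(\g)\to\HH^3(\g)$ well defined; this is a short computation from invariance of $\theta$ and the Jacobi identity.

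With the characterisation in hand, the remaining exactness statements are formal. For exactness at $\HL^2(\g)$: the map $\phi$ descends to cohomology since coboundaries are antisymmetric and thus symmetrise to zero, and $\phi([\chi])=0$ iff $\chi^+=0$ iff $\chi$ is alternating, i.e. a Chevalley--Eilenberg cocycle, whence $\ker\phi=\im\iota$. For exactness at $\Inv(\g)$: if $\theta=\phi(\chi)=2\chi^+$ lies in $\im\phi$, then $h(\theta)=2h(\chi^+)=2\,d\chi^-$ is a coboundary, so $\im\phi\subseteq\ker h$; conversely, if $\theta\in\Inv(\g)$ satisfies $h(\theta)=d\nu$ for some $\nu\in\extp^2\g^*$, then the characterisation shows that $\chi\defeq\tfrac12\nu+\tfrac12\theta$ is a Leibniz $2$-cocycle with $\chi^+=\tfrac12\theta$, so $\phi(\chi)=\theta$ and $\theta\in\im\phi$.

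The main obstacle is the characterisation lemma, and within it the bookkeeping needed to verify that symmetrising a Leibniz cocycle yields an invariant form, and to extract the clean identity $d\chi^-=h(\chi^+)$ from \eqref{eq:Leibniz cocycle}. The sign conventions in the Chevalley--Eilenberg differential \eqref{eq: LA cohomology d} and in the alternating form $h(\theta)$ must be tracked carefully so that the two sides match exactly rather than up to an unaccounted scalar; once this is pinned down, everything else follows formally from the decomposition $\chi=\chi^-+\chi^+$ and the coincidence of the two coboundary spaces in degree two.
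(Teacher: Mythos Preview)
The paper does not give its own proof of this proposition; it is quoted verbatim as \cite[Proposition 3.2]{HuPeiLiu} and used as a black box to reduce the computation of $\HL^2(\W_X(\lambda))$ to that of $\Inv(\W_X(\lambda))$. So there is nothing to compare against.

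That said, your argument is correct and is essentially the standard proof of this exact sequence. The key step is exactly the characterisation you isolate: a bilinear form $\chi$ is a Leibniz $2$-cocycle if and only if its symmetric part $\chi^+$ is invariant and its antisymmetric part satisfies $d\chi^- = h(\chi^+)$. Once this is established (and it does follow from the manipulations you describe, together with the observation that $h(\theta)$ is alternating and closed for $\theta\in\Inv(\g)$), the exactness at each node is formal, as you indicate. Your remark that the Leibniz and Chevalley--Eilenberg $2$-coboundaries coincide is the crucial point making both $\iota$ and $\phi$ well defined on cohomology. The only caveat is the one you already flag: the sign bookkeeping in verifying $d\chi^-=h(\chi^+)$ must be done carefully, but there is no conceptual gap.
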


Thus, it suffices to look for symmetric invariant bilinear forms on $\W_X (\lambda)$ and then check which of these satisfy the Leibniz cocycle condition \eqref{eq:Leibniz cocycle}.

\begin{lemma}\label{lem: components of invariants forms on W_X()}
    Let $X \in \{A,B\}$ and $\lambda \in \PP^1$. Then, for all $\theta \in \Inv(\W_X(\lambda))$, we have
    \begin{align*}
        \theta(L_n, L_m) = 0 \quad &\text{for all } n,m \in \ZZ; \\
        \theta(L_n, X_m) = 0 \quad &\text{if } n + m \neq 0; \\
        \theta(X_n, X_m) = 0 \quad &\text{for all } (n,m) \in \ZZ^2 \setminus \{(0,0)\}.
    \end{align*}
\end{lemma}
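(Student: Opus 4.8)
The plan is to exploit the internal grading of $\W_X(\lambda)$ coming from $\ad_{L_0}$, and then dispose separately of the finitely many ``diagonal'' pairs that the grading argument cannot reach. Recall that $L_0$ acts diagonally: $[L_0,L_n]=nL_n$ and, since $\omega_{A(\lambda)}(0,n)=\omega_{B(\lambda)}(0,n)=n$, also $[L_0,X_n]=nX_n$. Thus every basis vector is homogeneous of degree equal to its index.

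First I would establish the grading step. If $u,v\in\W_X(\lambda)$ are homogeneous of degrees $p$ and $q$, then invariance applied to the triple $(u,L_0,v)$ gives $q\,\theta(u,v)=\theta(u,[L_0,v])=\theta([u,L_0],v)=-p\,\theta(u,v)$, where the middle equality is the invariance identity $\theta([x,y],z)=\theta(x,[y,z])$ with $x=u,\ y=L_0,\ z=v$, and the outer equalities use $[L_0,v]=qv$ and $[u,L_0]=-pu$. Hence $(p+q)\theta(u,v)=0$, so $\theta(u,v)=0$ whenever $p+q\neq 0$. Applied to the three types of pairs, this already proves the second bullet in full ($\theta(L_n,X_m)=0$ for $n+m\neq 0$) and reduces the first and third bullets to the diagonal values $\theta(L_n,L_{-n})$ and $\theta(X_n,X_{-n})$.

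For the $X$-$X$ diagonal I would use that $X(\lambda)$ is abelian. Since $[X_n,X_{-n}]=0$, invariance gives $n\,\theta(X_n,X_{-n})=\theta([L_0,X_n],X_{-n})=\theta(L_0,[X_n,X_{-n}])=0$, so $\theta(X_n,X_{-n})=0$ for all $n\neq 0$, while $\theta(X_0,X_0)$ stays unconstrained, exactly as the statement allows. For the $L$-$L$ diagonal, invariance on the triple $(L_n,L_n,L_{-2n})$ reads $0=\theta([L_n,L_n],L_{-2n})=\theta(L_n,[L_n,L_{-2n}])=-3n\,\theta(L_n,L_{-n})$, killing $\theta(L_n,L_{-n})$ for $n\neq 0$. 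The single remaining value $\theta(L_0,L_0)$ is then handled by the triple $(L_0,L_1,L_{-1})$: invariance gives $\theta(L_1,L_{-1})=\theta([L_0,L_1],L_{-1})=\theta(L_0,[L_1,L_{-1}])=-2\,\theta(L_0,L_0)$, and since $\theta(L_1,L_{-1})=0$ from the previous step, we conclude $\theta(L_0,L_0)=0$.

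No step here is a genuine obstacle; the only point requiring care is that the grading relation $(p+q)\theta(u,v)=0$ is blind to the diagonal $p+q=0$, so those cases must be extracted by hand from the bracket relations. The mild asymmetry between the two diagonals is that for $X$-$X$ the vanishing is immediate from $[X,X]=0$, whereas for $L$-$L$ one must feed in a second, carefully chosen triple to reach $\theta(L_0,L_0)$.
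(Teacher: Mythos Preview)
Your proof is correct and follows essentially the same approach as the paper: a grading argument via $\ad_{L_0}$ reduces to the diagonals, the $X$-$X$ diagonal is killed using $[X_n,X_{-n}]=0$, and the $L$-$L$ diagonal is handled by a triple with a repeated entry exploiting $[L_k,L_k]=0$. The only cosmetic difference is your choice of triples $(L_n,L_n,L_{-2n})$ and $(L_0,L_1,L_{-1})$ in place of the paper's $(L_2,L_{-1},L_{-1})$ together with the relation $\theta(L_n,L_{-n})=-2\theta(L_0,L_0)$.
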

\begin{proof}
    Let $n,m \in \ZZ$. We repeatedly use the invariance of $\theta$ on specific combinations of generators. First,
    $$\theta([L_0,L_n], L_m) = -\theta([L_n,L_0],L_m) = -\theta(L_n, [L_0,L_m]).$$
    Since $[L_0,L_n] = nL_n$ and $[L_0,L_m] = mL_m$, we get
    $$n \theta(L_n, L_m) = -m\theta(L_n, L_m).$$
    Therefore, $(n+m)\theta (L_n, L_m) = 0$, which implies that
    \begin{equation}\label{eq: InvForm L(n,m)=0 unless n+m=0}
        \theta(L_n, L_m) = 0 \quad \text{if } n + m \neq 0.
    \end{equation}
    On the other hand, $\theta([L_0,L_n], L_m) = \theta(L_0,[L_n, L_m])$, so
    \begin{equation}\label{eq: InvForm L(n,m) vs L(0,n+m)}
        n\theta(L_n,L_m) = (m-n)\theta(L_0, L_{m+n}) \quad \text{for all } m,n \in \ZZ.
    \end{equation}
    Letting $m = -n \neq 0$ in \eqref{eq: InvForm L(n,m) vs L(0,n+m)} gives
    \begin{equation}\label{eq: InvForm L(n,-n) = L(0,0)}
        \theta(L_n,L_{-n}) = -2\theta(L_0,L_0).
    \end{equation}
    Furthermore, 
    $$\theta([L_2,L_{-1}],L_{-1}) = -3\theta(L_1, L_{-1}) = 6\theta(L_0, L_0),$$
    where we used \eqref{eq: InvForm L(n,-n) = L(0,0)} for the last equality. On the other hand,
    $$\theta([L_2,L_{-1}],L_{-1}) = \theta(L_2,[L_{-1},L_{-1}]) = \theta(L_2,0) = 0.$$
    It follows that
    \begin{equation}
    \label{eq: InvForm L(0,0) = 0}
        \theta(L_0,L_0) = 0.
    \end{equation}
    Thus, \eqref{eq: InvForm L(n,m)=0 unless n+m=0}, \eqref{eq: InvForm L(n,-n) = L(0,0)} and \eqref{eq: InvForm L(0,0) = 0} imply that $\theta(L_n, L_m) = 0$ for all $n,m\in\ZZ$, as required.
    
    Next, proceeding as above and using the fact that $[L_0, X_m] = m X_m$ for both $X=A$ and $X=B$, we get $\theta(L_n, X_m) = 0$ if $n + m \neq 0$.
    
    Finally, we have $\theta([L_0,X_n],X_m) = \theta(L_0,[X_n, X_m]) = \theta(L_0,0) = 0$, and thus
    $$n\theta(X_n, X_m) = 0 \quad \text{for all } n,m \in \ZZ.$$
    Since $\theta$ is symmetric, this proves the last statement of the lemma, and thereby completes the proof.
\end{proof}

Lemma \ref{lem: components of invariants forms on W_X()} narrows down which components of any symmetric invariant bilinear form on $\W_X(\lambda)$ can be non-zero. 

\begin{proposition}\label{prop:InvForm}
Let $X \in \{A,B\}$ and $\lambda \in \PP^1$. We have
    $$\dim(\Inv(\W_X(\lambda)))=\begin{cases}
        1, \quad &\text{if } X = A, \\
        0, \quad &\text{if } X = B.
    \end{cases}$$
\end{proposition}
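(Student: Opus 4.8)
The plan is to combine Lemma \ref{lem: components of invariants forms on W_X()} with a small number of further invariance relations to pin down the only two families of potentially non-zero components of a symmetric invariant form. By that lemma, any $\theta \in \Inv(\W_X(\lambda))$ is completely determined by the values $g(n) \defeq \theta(L_n, X_{-n})$ for $n \in \ZZ$ together with the single scalar $c \defeq \theta(X_0, X_0)$, every other component being forced to vanish. The goal is therefore threefold: show that $g \equiv 0$ in both cases; show that $c$ is forced to vanish when $X = B$; and show that $c$ is unconstrained (and genuinely yields an invariant form) when $X = A$.

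First I would eliminate the mixed components $g(n)$. Applying invariance to the triple $(L_a, L_b, X_c)$, that is $\theta([L_a, L_b], X_c) = \theta(L_a, [L_b, X_c])$, and using that $\theta(L_i, X_j)$ vanishes unless $i + j = 0$, yields the recursion $(b - a)\, g(a+b) = \omega_{X(\lambda)}(b, -(a+b))\, g(a)$ for all $a, b \in \ZZ$. The crucial observation is the diagonal substitution $b = a$ with $a \neq 0$: here $\omega_{X(\lambda)}(a, -2a)$ carries no Kronecker-delta contribution, so the right-hand side collapses to a nonzero multiple of $g(a)$ (namely $-a\,g(a)$ for $X=A$ and $-2a\,g(a)$ for $X=B$) while the left-hand side vanishes, forcing $g(a) = 0$ for every $a \neq 0$. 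A single further specialisation, $a = 1$ and $b = -1$, then gives $-2g(0) = \omega_{X(\lambda)}(-1,0)\,g(1) = 0$ since $g(1) = 0$, so $g(0) = 0$ as well. This argument runs identically for $X = A$ and $X = B$ after inserting the respective formulas for $\omega_{X(\lambda)}$, and the $\lambda = \infty$ convention alters none of the relevant coefficients.

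It then remains to analyse $c = \theta(X_0, X_0)$, which is where the two cases diverge. Feeding the triple $(L_a, X_{-a}, X_0)$ into the invariance identity and using $[X_{-a}, X_0] = 0$ gives $\omega_{X(\lambda)}(a, -a)\, c = 0$ for all $a \in \ZZ$. For $X = A$ one checks directly that $\omega_{A(\lambda)}(a, -a) = a(a+1)\lambda\,\delta_{-a}^0 = 0$ for every $a$ and every $\lambda$ (including $\infty$), so no condition is imposed on $c$; combined with $g \equiv 0$, all remaining invariance relations reduce to $0 = 0$, so the form $\theta_A$ (with $c = 1$ and all other components zero, as in Theorem \ref{thm: second Leibniz cohomology}) is genuinely invariant and spans a one-dimensional space, giving $\dim(\Inv(\W_A(\lambda))) = 1$. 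For $X = B$, however, $\omega_{B(\lambda)}(a, -a) = -a\bigl(1 + (a+1)\lambda\bigr)$ (and $-a(1+a)$ when $\lambda = \infty$), which is nonzero for a suitable $a$ — for instance $a = 1$ unless $\lambda = -\tfrac12$, in which case $a = 2$ works — forcing $c = 0$ and hence $\Inv(\W_B(\lambda)) = 0$.

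The main obstacle is not conceptual but book-keeping: one must track the Kronecker deltas in $\omega_{A(\lambda)}$ and $\omega_{B(\lambda)}$ (which sit on different indices, $m$ versus $n+m$) carefully enough to be certain that the diagonal coefficient is genuinely nonzero and delta-free, and one must confirm that $c$ can always be eliminated in the $B$-case uniformly across all $\lambda \in \PP^1$, including the exceptional values $-\tfrac12$ and $-\tfrac13$ and the point $\infty$. The other point requiring care is the sufficiency direction for $X = A$: after deriving the necessary conditions one should verify that $\theta_A$ satisfies the full invariance identity, which holds precisely because every surviving term is a multiple of $\omega_{A(\lambda)}(a,-a) = 0$.
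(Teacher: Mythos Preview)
Your proof is correct and follows essentially the same strategy as the paper: reduce via Lemma \ref{lem: components of invariants forms on W_X()} to the values $g(n)=\theta(L_n,X_{-n})$ and $c=\theta(X_0,X_0)$, kill $g$ using invariance on $(L,L,X)$ triples, and then distinguish $A$ from $B$ via the coefficient $\omega_{X(\lambda)}(a,-a)$ in an $(L,X,X)$ triple. The only tactical difference is that you use a single diagonal substitution $b=a$ in your recursion to kill $g(a)$ uniformly in both cases, whereas the paper handles $X=A$ and $X=B$ by two separate specialisations (exploiting $[L_n,A_{-n}]=0$ and $[L_{-n},L_{-n}]=0$ respectively); this is a minor streamlining rather than a different argument.
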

\begin{proof}
    Equipped with Lemma \ref{lem: components of invariants forms on W_X()}, we now treat $X = A$ and $X = B$ as separate cases.
    
    \case{1}{$X = A$.}
    
    For any $\theta \in \Inv(\W_A(\lambda))$ and $n \in \ZZ \nonzero$, we have
    $$n\theta(L_n, A_{-n}) = \theta([L_0, L_{n}], A_{-n}) = \theta(L_{0}, [L_n,A_{-n}]) = 0,$$
    where the last equality follows from the fact that $[L_n, A_{-n}] = 0$ for all $n \in \ZZ$. Thus,
    \begin{equation}\label{eq: InvForm A}
        \theta(L_n, A_{-n}) = 0 \quad \text{for all } n \in \ZZ \nonzero.
    \end{equation}
    Now, for all $n \in \ZZ\nonzero$,
    $$2\theta(L_0, A_0) = \theta([L_{-1}, L_1], A_{0}) = \theta(L_{-1}, [L_1,A_{0}]) = \omega_{A(\lambda)}(1,0) \theta(L_{-1}, A_1) = 0,$$
    where we used \eqref{eq: InvForm A} for the final equality. Thus $\theta(L_0,A_0) = 0$, so we conclude that $\theta(L_n, A_m) = 0$ for all $n,m \in \ZZ$, by Lemma \ref{lem: components of invariants forms on W_X()}.
    
    The only component that is unconstrained is $\theta(A_0, A_0)$. Hence, $\theta(A_0, A_0) = 1$ defines the unique symmetric invariant bilinear form on $\W_A (\lambda)$, up to rescaling. In other words, $\dim(\Inv(\W_A(\lambda))) = 1$.

    \case{2}{$X = B$.}
    
    Again, for any $\theta \in \Inv(\W_B(\lambda))$ and $n \in \ZZ \nonzero$, we have
    $$n\theta(L_n, B_{-n}) = \theta([L_0, L_{n}], B_{-n}) = \theta(L_{0}, [L_n,B_{-n}]) = \omega_{B(\lambda)}(n,-n)\theta(L_0,B_0),$$
    and thus, choosing $N \in \ZZ \nonzero$ such that $\omega_{B(\lambda)}(N,-N) = -(N + N(N + 1)\lambda) \neq 0$, we get
    \begin{equation}\label{eq: InvForm LB part 1}
        \theta(L_0, B_0) = \frac{N}{\omega_{B(\lambda)}(N,-N)} \theta(L_N, B_{-N}).
    \end{equation}
    Furthermore,
    $$-2n\theta(B_n, L_{-n}) = \theta([B_{2n},L_{-n}], L_{-n}) = \theta(B_{2n},[L_{-n},L_{-n}]) = 0,$$
    which implies that
    \begin{equation}\label{eq: InvForm LB part 2}
         \theta(B_n, L_{-n}) = 0 \quad \text{for all } n \in \ZZ \nonzero.
    \end{equation}
    Letting $n = -N$ in \eqref{eq: InvForm LB part 2}, we get $\theta(L_N, B_{-N}) = 0$, so \eqref{eq: InvForm LB part 1} becomes $\theta(L_0,B_0) = 0$. Hence, $\theta(L_n, B_m) = 0$ for all $n,m \in \ZZ$, by Lemma \ref{lem: components of invariants forms on W_X()}.
    
    Again, the only component left to determine is $\theta(B_0, B_0)$. Unlike the case where $X=A$, this component can be constrained because $B_0 \in [\W_B(\lambda),\W_B(\lambda)]$. As before, choose $N \in \ZZ \nonzero$ such that $\omega_{B(\lambda)}(N,-N) \neq 0$. Then $\theta([L_N, B_{-N}], B_0) = \omega_{B(\lambda)}(N,-N)\theta(B_0,B_0)$, so
    $$\theta(B_0,B_0) = \frac{\theta([L_N, B_{-N}], B_0)}{\omega_{B(\lambda)}(N,-N)} = \frac{\theta(L_N, [B_{-N}, B_0])}{\omega_{B(\lambda)}(N,-N)} = 0.$$
    Together with Lemma \ref{lem: components of invariants forms on W_X()}, we have shown that for any $\theta \in \Inv(\W_B(\lambda))$, we have
    $$\theta(L_n,L_m) = \theta(L_n, B_m) = \theta(B_n,B_m) = 0$$
    for all $n,m \in \ZZ$, meaning $\dim(\Inv(\W_B(\lambda))) = 0$, as required. This completes the proof.
\end{proof}
It follows form Propositions \ref{prop:Leibniz InvForm exact sequence} and \ref{prop:InvForm} that
    $$\begin{gathered}
        \HL^2(\W_B(\lambda)) = \HH^2(\W_B(\lambda)); \\
        \dim(\HH^2(\W_A(\lambda))) \leq \dim(\HL^2(\W_A(\lambda))) \leq \dim(\HH^2(\W_A(\lambda))) + 1.
    \end{gathered}$$
We now finish the proof of Theorem \ref{thm: second Leibniz cohomology} by constructing a nontrivial Leibniz 2-cocycle on $\W_A(\lambda)$ which maps to a nonzero element of $\Inv(\W_A(\lambda))$, proving that $\dim(\HL^2(\W_A(\lambda))) = \dim(\HH^2(\W_A(\lambda))) + 1$.

\begin{proof}[Proof of Theorem \ref{thm: second Leibniz cohomology}]
    As mentioned above, we have already shown the result for $X = B$, so assume that $X = A$.
    
    It is straightforward to check that $\theta_A$ satisfies the Leibniz cocycle condition \eqref{eq:Leibniz cocycle}. Since $\theta_A$ maps to a nonzero element of $\Inv(\W_A(\lambda))$ under the map $\phi$ of Proposition \ref{prop:Leibniz InvForm exact sequence}, it follows that $\theta_A$ gives an element of $\HL^2(\W_A(\lambda)) \setminus \HH^2(\W_A(\lambda))$. This completes the proof.
\end{proof}

\begin{remark}
    Theorem \ref{thm: second Leibniz cohomology} is consistent with the analogous results for $\W(0,1)$ and $\W(0,0)$ (see Theorem \ref{thm:GJP}), which are the $\lambda=0$ cases of $\W_A(\lambda)$ and $\W_B(\lambda)$, respectively. As pointed out in Remark \ref{rem:error}, our result for $\W(0,1)$ is different to that of \cite[Theorem 3.5]{GaoJiangPei}. This is because the map $f^{0,1}$ from \cite[Lemma 3.2]{GaoJiangPei} is not an invariant bilinear form, which can be seen in the proof of Proposition \ref{prop:InvForm} for $X = A$ and $\lambda = 0$.
\end{remark}

\section{Automorphisms}\label{sec:automorphisms}

In this section, we compute the automorphism groups of the Lie algebras $\W_X(\lambda)$. We start by defining some automorphisms.

\begin{Notation}\label{ntt:automorphisms}
    Let $X \in \{A,B\}$ and $\lambda \in \PP^1$. For $\alpha \in \CC^\times$, define $\sigma_\alpha \in \Aut(\W_X(\lambda))$ as follows:
    $$\sigma_\alpha(L_n) = \alpha^n L_n, \quad \sigma_\alpha(X_n) = \alpha^n X_n.$$
    Define $\tau_0 \in \Aut(\W_X(0))$ as follows:
    $$\tau_0(L_n) = -L_{-n}, \quad \tau_0(X_n) = X_{-n}.$$
    Define $\tau_{-1} \in \Aut(\W_X(-1))$ as follows:
    $$\tau_{-1}(L_n) = -L_{-n}, \quad \tau_{-1}(X_n) = \begin{cases}
        -X_0, &\text{if } n = 0, \\
        X_{-n}, &\text{if } n \neq 0.
    \end{cases}$$
    For $\xi \in \CC^\times$, we define $\mu_\xi \in \Aut(\W_X(\lambda))$ as follows:
    $$\mu_\xi(L_n) = L_n, \quad \mu_\xi(X_n) = \xi X_n.$$
    For $X = A$ and $a \in \CC$, we define $\varphi^{A(\lambda)}_a, \psi^{A(\lambda)}_a \in \Aut(\W_A(\lambda))$ as follows:
    $$\begin{gathered}
        \varphi^{A(\lambda)}_a(L_n) = L_n + an^2 A_n, \quad \varphi^{A(\lambda)}_a(A_n) = A_n; \\
        \psi^{A(\lambda)}_a(L_n) = L_n + an A_n, \quad \psi^{A(\lambda)}_a(A_n) = A_n.
    \end{gathered}$$
    For $X = B$ and $a \in \CC$, we define $\varphi^{B(\lambda)}_a \in \Aut(\W_B(\lambda))$ and $\psi^{B(0)}_a \in \Aut(\W_B(0))$ as follows:
    $$\begin{gathered}
        \varphi^{B(\lambda)}_a(L_n) = \begin{cases}
            L_0 + (\lambda + 1)a B_0, &\text{if } n = 0 \text{ and } \lambda \neq \infty, \\
            L_n + a B_n, &\text{otherwise},
        \end{cases}
    \quad \varphi^{B(\lambda)}_a(B_n) = B_n; \\
    \psi^{B(0)}_a(L_n) = L_n + an B_n, \quad \psi^{B(0)}_a(B_n) = B_n.
    \end{gathered}$$
    Finally, letting $k \in \ZZ/2\ZZ$, $a,b \in \CC$, and $\alpha, \xi \in \CC^\times$, we define
    \begin{equation}\label{eq:outer automorphism general form}
        \Phi_{k;a,b;\alpha,\xi}^{X(\lambda)} = \tau_\lambda^k \circ \varphi_a^{X(\lambda)} \circ \psi_b^{X(\lambda)} \circ \sigma_\alpha \circ \mu_\xi,
    \end{equation}
    where we let $\tau_\lambda = \id_{\W_X(\lambda)}$ if $\lambda \notin \{0,-1\}$ and $\psi_b^{B(\lambda)} = \id_{\W_B(\lambda)}$ if $\lambda \neq 0$.
\end{Notation}

Although it is not entirely obvious from the definitions that the linear maps defined above are automorphisms, it will become clear that they preserve the Lie brackets in the proof of Proposition \ref{prop:outer automorphisms}.

Next, we consider inner automorphisms of $\W_X(\lambda)$, defined below.

\begin{defn}
    Let $\g$ be a Lie algebra and let $x \in \g$ such that $\ad_x$ is locally nilpotent (in other words, for all $y \in \g$, there exists $n \in \NN$ such that $\ad_x^n(y) = 0$). The \emph{exponential} $\exp(\ad_x) \in \Aut(\g)$ of $\ad_x$ is defined by
    $$\exp(\ad_x) = \sum_{n = 0}^\infty \frac{1}{n!}\ad_x^n = \id_\g + \sum_{n = 1}^\infty \frac{1}{n!}\ad_x^n.$$
    The subgroup of $\Aut(\g)$ generated by $\{\exp(\ad_x) \mid x \in \g \text{ such that } \ad_x \text{ is locally nilpotent}\}$ is called the group of \emph{inner automorphisms} of $\g$, denoted $\InnAut(\g)$.
\end{defn}

Given a Lie algebra $\g$, the group of inner automorphisms of $\g$ is a normal subgroup of $\Aut(\g)$. This is because
$$\sigma \circ \exp(\ad_x) \circ \sigma^{-1}(y) = \sigma\left(\sum_{n=0}^\infty \ad_x^n(\sigma^{-1}(y))\right) = \sum_{n=0}^\infty \ad_{\sigma(x)}^n(y) = \exp(\ad_{\sigma(x)})(y)$$
for all $x,y \in \g$ and $\sigma \in \Aut(\g)$, so $\sigma \circ \exp(\ad_x) \circ \sigma^{-1} = \exp(\ad_{\sigma(x)}) \in \InnAut(\g)$.

\begin{remark}\label{rem:exp(L0)}
    Even though we only defined the exponential of a locally nilpotent adjoint map, we can sometimes define exponentials of adjoint maps even when they are not locally nilpotent. For example, this is the case for $\ad_{L_0}$, which is not locally nilpotent, but if $t \in \CC$, then $\exp(t \ad_{L_0}) = \sigma_{e^t}$, where $\sigma_\alpha$ is defined in Notation \ref{ntt:automorphisms}.

    On the other hand, $\exp(\ad_{L_n})$ is not defined for $n \in \ZZ \nonzero$, because the infinite sum
    $$\exp(\ad_{L_n})(L_m) = \sum_{k=0}^\infty \frac{1}{k!}\ad_{L_n}^k(L_m)$$
    is not defined for any $m \in \ZZ \setminus \{n\}$.
\end{remark}

We now compute $\InnAut(\W_X(\lambda))$.

\begin{lemma}\label{lem:inner automorphisms}
    Let $X \in \{A,B\}$ and $\lambda \in \PP^1$. Then the locally nilpotent adjoint maps of $\W_X(\lambda)$ are $\ad_w$ for $w \in X(\lambda)$. Therefore, $\InnAut(\W_X(\lambda))$ is generated by
    $$\{\exp(c \ad_{X_n}) \mid n \in \ZZ, c \in \CC\}.$$
    Letting $c \in \CC$ and $n,m \in \ZZ$ the action of $\InnAut(\W_X(\lambda))$ on $\W_X(\lambda)$ is given by
    \begin{align*}
        \restr{\exp(c\ad_{X_n})}{X(\lambda)} &= \id_{X(\lambda)}, \\
        \exp(c\ad_{X_n})(L_m) &= L_m - c\omega_{X(\lambda)}(m,n)X_{n+m} \\
        &= \begin{cases}
            L_m - c(n + m + m(m + 1)\lambda \delta_n^0)A_{n+m}, &\text{if } X = A,\\
            L_m - c(n - m(m + 1)\lambda \delta_{n+m}^0)B_{n+m}, &\text{if } X = B.
        \end{cases}
    \end{align*}
\end{lemma}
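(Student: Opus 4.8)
The plan is to establish the three assertions in turn, reducing the only substantial one --- the identification of the locally nilpotent adjoint maps --- to the corresponding fact for the Witt algebra alone. The easy inclusion is immediate: if $w \in X(\lambda)$, then since $X(\lambda)$ is an abelian ideal we have $\ad_w(X_m) = 0$ and $\ad_w(L_m) = [w,L_m] \in X(\lambda)$, whence $\ad_w^2 = 0$ on all of $\W_X(\lambda)$; in particular $\ad_w$ is locally nilpotent. For the converse, I would exploit the split surjection of Lie algebras $\pi \colon \W_X(\lambda) \to \W$ with kernel $X(\lambda)$. Since $\pi$ is a homomorphism, $\pi(\ad_x^k(y)) = \ad_{\pi(x)}^k(\pi(y))$ for all $y$, and $\pi$ is surjective, so local nilpotence of $\ad_x$ forces local nilpotence of $\ad_{\pi(x)}$ on $\W$. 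It therefore suffices to prove that the only element of $\W$ with locally nilpotent adjoint is $0$; this gives $\pi(x) = 0$, i.e. $x \in X(\lambda)$, as desired.

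The key step, and the main obstacle, is thus the claim that no nonzero $L \in \W$ has locally nilpotent adjoint action. I would prove this by a leading-term argument using the $\ZZ$-grading of $\W$. Write $L = \sum_{n=p}^{q} a_n L_n$ with $a_p, a_q \neq 0$. For a homogeneous element $y$ of degree $d$, the component of $\ad_L^k(y)$ in degree $d + kq$ equals $a_q^k\,\ad_{L_q}^k(y)$, because each application of $\ad_L$ raises degree by at most $q$ and the top degree is attained only by repeatedly using the leading term $a_q L_q$. A direct computation gives
\[
    \ad_{L_q}^k(L_m) = \Big(\prod_{j=-1}^{k-2}(m + jq)\Big) L_{m + kq},
\]
so when $q > 0$ one can choose $m$ (for instance $m = 1$ if $q \geq 2$, or $m = 2$ if $q = 1$) making every factor $m + jq$ nonzero, whence $\ad_L^k(L_m) \neq 0$ for all $k$. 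If $q \leq 0$ then necessarily $p < 0$ unless $L = a_0 L_0$, and the symmetric argument with the trailing term $a_p L_p$ and the bottom-degree component applies; the remaining case $L = a_0 L_0$ is immediate since $\ad_{L_0}^k(L_1) = L_1$ for all $k$. In every case $\ad_L$ fails to be locally nilpotent, completing the proof of the first assertion.

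For the generation statement, I would note that by the first part every locally nilpotent adjoint map is $\ad_w$ with $w = \sum_n c_n X_n \in X(\lambda)$ a finite sum. Since $[X_n,X_m] = 0$, the maps $\ad_{X_n}$ commute pairwise and each squares to zero, so $\exp(\ad_w) = \prod_n \exp(c_n \ad_{X_n})$ is a finite product of the claimed generators; conversely each $\exp(c\ad_{X_n}) = \exp(\ad_{cX_n})$ is itself such an exponential. Hence $\InnAut(\W_X(\lambda))$ is generated by $\{\exp(c\ad_{X_n}) \mid n \in \ZZ,\ c \in \CC\}$. Finally, for the explicit action, $\ad_{X_n}$ annihilates $X(\lambda)$, giving $\restr{\exp(c\ad_{X_n})}{X(\lambda)} = \id_{X(\lambda)}$, while $\ad_{X_n}(L_m) = [X_n,L_m] = -\omega_{X(\lambda)}(m,n)X_{n+m}$ lies in $X(\lambda)$ and is thus killed by a further application of $\ad_{X_n}$. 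The exponential series therefore truncates after two terms to $\exp(c\ad_{X_n})(L_m) = L_m - c\,\omega_{X(\lambda)}(m,n)X_{n+m}$, and substituting the explicit values of $\omega_{A(\lambda)}$ and $\omega_{B(\lambda)}$ (with the convention $n(n+1)\lambda = n^2$ when $\lambda = \infty$) yields the two displayed formulas. The genuinely nontrivial content is confined to the second paragraph; the rest is formal.
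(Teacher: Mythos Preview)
Your proof is correct and follows essentially the same approach as the paper's. The paper's argument is extremely terse: for the converse direction it simply asserts ``It is clear that if $\alpha_i \neq 0$ for some $i \in \ZZ$, then $\ad_w$ is not locally nilpotent'', without justification. You supply a rigorous proof of precisely this step by passing to the quotient $\W$ and running a leading-term argument there; this is the natural way to make the paper's ``clear'' step honest, and your computation of $\ad_{L_q}^k(L_m)$ and case analysis on $q$ are correct. The remaining parts (the easy inclusion via $\ad_w^2 = 0$, and the truncation of the exponential series) match the paper's reasoning exactly.
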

\begin{proof}
    Let $w \in \W_X(\lambda)$ such that $\ad_w$ is locally nilpotent. Then $w = \sum_{i \in \ZZ} (\alpha_i L_i + \beta_i X_i)$ for some $\alpha_i, \beta_i \in \CC$. It is clear that if $\alpha_i \neq 0$ for some $i \in \ZZ$, then $\ad_w$ is not locally nilpotent. Thus, $w \in X(\lambda)$.
    
    Conversely, let $w \in X(\lambda)$. Since $X(\lambda)$ is an ideal of $\W_X(\lambda)$, we have $\ad_w(\W_X(\lambda)) \subseteq X(\lambda)$. Now, $X(\lambda)$ is an abelian Lie algebra, so $\ad_w^2$ is the zero map on $\W_X(\lambda)$. It follows that $\ad_w$ is locally nilpotent, as required.

    For the final sentence, note that
    $$\exp(c \ad_{X_n}) = \id_{\W_X(\lambda)} + c \ad_{X_n}$$
    for all $n \in \ZZ$, since all the higher powers of $\ad_{X_n}$ are zero. The result now follows by the definition of the bracket on $\W_X(\lambda)$.
\end{proof}

By Lemma \ref{lem:inner automorphisms}, the action of an arbitrary element of $\InnAut(\W_X(\lambda))$ on $\W_X(\lambda)$ is given by
$$\prod_{i \in I}\exp(c_i\ad_{X_i})(L_n) = L_n - \sum_{i \in I} c_i\omega_{X(\lambda)}(n,i)X_{n+i},$$
where $I$ is a finite subset of $\ZZ$ and $c_i \in \CC$.

Furthermore, since $X(\lambda)$ is an abelian subalgebra of $\W_X(\lambda)$, it follows from Lemma \ref{lem:inner automorphisms} that $\InnAut(\W_X(\lambda))$ is an abelian normal subgroup of $\Aut(\W_X(\lambda))$. We now compute the group structure of the group of inner automorphisms of $\W_X(\lambda)$.

\begin{lemma}\label{lem:group structure of inner auomorphisms}
    Let $X \in \{A,B\}$ and $\lambda \in \PP^1$. Then $\InnAut(\W_X(\lambda)) \cong \CC^{\infty}$, where
    \begin{equation}\label{eq:Cinfty}
        \CC^\infty \coloneqq \{(z_i)_{i \in \ZZ} \mid z_i \in \CC, \text{ all but finitely many } z_i \text{ are zero}\}
    \end{equation}
    is the additive group of finite $\ZZ$-sequences over $\CC$.
\end{lemma}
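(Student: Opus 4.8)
The plan is to exhibit an explicit surjective homomorphism from the additive group $\CC^\infty$ onto $\InnAut(\W_X(\lambda))$ and then to compute its kernel directly from the action recorded in Lemma \ref{lem:inner automorphisms}. First I would define
\[
    \Psi\colon \CC^\infty \to \Aut(\W_X(\lambda)), \qquad \Psi\big((c_i)_{i \in \ZZ}\big) = \prod_{i \in \ZZ} \exp(c_i \ad_{X_i}),
\]
which is a finite composition since all but finitely many $c_i$ vanish. The structural fact I would establish first is that $\ad_{X_n} \circ \ad_{X_m} = 0$ for all $n,m \in \ZZ$: indeed $\im(\ad_{X_m}) \subseteq X(\lambda)$ because $X(\lambda)$ is an ideal, and $\ad_{X_n}$ annihilates $X(\lambda)$ because $X(\lambda)$ is abelian. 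Consequently $\exp(c\ad_{X_n}) = \id_{\W_X(\lambda)} + c\ad_{X_n}$, these maps commute pairwise, and every cross term in the product vanishes, so that $\Psi\big((c_i)\big) = \id_{\W_X(\lambda)} + \sum_i c_i \ad_{X_i}$ and hence $\Psi\big((c_i)\big) \circ \Psi\big((c_i')\big) = \Psi\big((c_i + c_i')\big)$. Thus $\Psi$ is a homomorphism of abelian groups, and it is surjective by the generation statement of Lemma \ref{lem:inner automorphisms}.

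Next I would compute $\ker\Psi$. Since $\restr{\Psi((c_i))}{X(\lambda)} = \id_{X(\lambda)}$ and $\Psi\big((c_i)\big)(L_n) = L_n - \sum_i c_i \omega_{X(\lambda)}(n,i) X_{n+i}$, the linear independence of the basis $\{X_k\}_{k \in \ZZ}$ shows that $\Psi\big((c_i)\big) = \id_{\W_X(\lambda)}$ if and only if $c_i\,\omega_{X(\lambda)}(n,i) = 0$ for all $n,i \in \ZZ$. Everything therefore reduces to deciding, for each fixed index $i$, whether the structure coefficient $\omega_{X(\lambda)}(n,i)$ vanishes identically in $n$.

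For $X = A$ I would check that no index is annihilated: for $i \neq 0$ we have $\omega_{A(\lambda)}(n,i) = n + i$, which is nonzero at $n = 0$, while for $i = 0$ the quantity $\omega_{A(\lambda)}(n,0) = n + n(n+1)\lambda$ (read as $n(n+1)$ when $\lambda = \infty$) is a nonzero function of $n$ for every value of $\lambda \in \PP^1$. Hence $\ker\Psi = 0$ and $\Psi$ is an isomorphism $\CC^\infty \xrightarrow{\sim} \InnAut(\W_A(\lambda))$. For $X = B$ the same computation gives $\omega_{B(\lambda)}(n,i) = i \neq 0$ at $n = 0$ whenever $i \neq 0$, so all indices $i \neq 0$ survive; the exceptional index is $i = 0$, where one checks that $\omega_{B(\lambda)}(n,0) = 0$ for all $n$, i.e. $B_0$ is central and $\exp(c\ad_{B_0}) = \id_{\W_B(\lambda)}$. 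Thus $\ker\Psi = \{(c_i) \mid c_i = 0 \text{ for all } i \neq 0\} \cong \CC$, and restricting $\Psi$ to the subgroup of sequences supported on $\ZZ \nonzero$ yields an isomorphism onto $\InnAut(\W_B(\lambda))$; this subgroup is again a group of finite $\CC$-sequences and so is abstractly isomorphic to $\CC^\infty$, completing the proof.

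The only genuinely delicate point is the case $X = B$, $i = 0$: the centrality of $B_0$ forces the naive map from $\CC^\infty$ to have a one-dimensional kernel, so one must either pass to the quotient or restrict to sequences supported away from $0$, and then observe that the result is still abstractly $\CC^\infty$. Everything else is a routine verification of the homomorphism property and of the non-vanishing of the coefficients $\omega_{X(\lambda)}$.
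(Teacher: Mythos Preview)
Your proof is correct and follows essentially the same approach as the paper: both arguments identify $\InnAut(\W_X(\lambda))$ with $\CC^\infty$ via the correspondence $\exp(c\,\ad_{X_n}) \leftrightarrow c\,e_n$, and both handle the $X=B$ case by observing that $B_0$ is central, so the index $0$ must be removed and one lands in $\CC_0^\infty \cong \CC^\infty$. Your version is somewhat more explicit than the paper's (which simply asserts that the map ``is easily seen to be an isomorphism''): you construct the map in the reverse direction, verify the homomorphism property via $\ad_{X_n}\circ\ad_{X_m}=0$, and compute the kernel directly from the structure coefficients $\omega_{X(\lambda)}$, which makes the argument self-contained.
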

\begin{proof}
    Let $e_n$ be the element of $\CC^\infty$ with zeros everywhere except for a one in the $n^\text{th}$ coordinate. The map
    \begin{align*}
        \InnAut(\W_A(\lambda)) &\to \CC^\infty \\
        \exp(c \ad_{A_n}) &\mapsto c e_n
    \end{align*}
    is easily seen to be an isomorphism in the case $X = A$.
    
    In the case $X = B$, we need a slightly different argument, since $B_0$ is central, so $\exp(c \ad_{B_0}) = \id_{\W_B(\lambda)}$ for all $c \in \CC$. Therefore, the map
    \begin{align*}
        \InnAut(\W_B(\lambda)) &\to \CC^\infty \\
        \exp(c \ad_{B_n}) &\mapsto c e_n \quad (n \in \ZZ \nonzero)
    \end{align*}
    is not an isomorphism, since $e_0$ is not in the image. However, the image of the map is the group
    \begin{equation}\label{eq:C0infty}
        \CC_0^\infty \coloneqq \{(z_i)_{i \in \ZZ} \mid z_i \in \CC, \text{ all but finitely many } z_i \text{ are zero}, z_0 = 0\},
    \end{equation}
    which is easily seen to be isomorphic to $\CC^\infty$.
\end{proof}

\begin{lemma}\label{lem:linear combination is inner}
    Let $\lambda \in \PP^1$. Then, given $a \in \CC$, we have $\lambda \varphi_a^{A(\lambda)} + (\lambda + 1)\psi_a^{A(\lambda)} \in \InnAut(\W_A(\lambda))$ if $\lambda \neq \infty$, while $\varphi_a^{A(\infty)} + \psi_a^{A(\infty)} \in \InnAut(\W_A(\infty))$ if $\lambda = \infty$. Defining
    \begin{equation}\label{eq:special automorphism A}
        \varpi_a^{A(\lambda)} \coloneqq \begin{cases}
            \psi_a^{A(\lambda)}, &\text{if } \lambda \neq 0, \\
            \varphi_a^{A(0)}, &\text{if } \lambda = 0,
        \end{cases}
    \end{equation}
    then $\varpi_a^{A(\lambda)} \notin \InnAut(\W_A(\lambda))$ for all $\lambda \in \PP^1$.

    Furthermore, none of the automorphisms of $\W_B(\lambda)$ from Notation \ref{ntt:automorphisms} are inner.
\end{lemma}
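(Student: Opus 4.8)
The plan is to read the inner automorphisms off explicitly from Lemma \ref{lem:inner automorphisms} and then compare them term by term with the maps in Notation \ref{ntt:automorphisms}. The two structural facts I would lean on throughout are that every element of $\InnAut(\W_X(\lambda))$ fixes $X(\lambda)$ pointwise and acts on each basis vector $L_m$ by $L_m \mapsto L_m - \sum_i c_i\,\omega_{X(\lambda)}(m,i)\,X_{m+i}$ for some finitely supported family $(c_i)_{i\in\ZZ}$ over $\CC$; in particular every inner automorphism induces the identity on the quotient $\W_X(\lambda)/X(\lambda) \cong \W$. Note that the non-membership claims must be read for $a \neq 0$ (resp. non-identity parameter values), since all the maps degenerate to $\id$ at the trivial value.

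For the membership statement I would simply compute $\exp(-a\ad_{A_0})$. Since $\omega_{A(\lambda)}(m,0) = m + m(m+1)\lambda = \lambda m^2 + (\lambda+1)m$ for $\lambda \neq \infty$ (and $m^2 + m$ when $\lambda = \infty$), Lemma \ref{lem:inner automorphisms} gives $\exp(-a\ad_{A_0})(L_m) = L_m + a(\lambda m^2 + (\lambda+1)m)A_m$, while fixing $A(\lambda)$. Reading the linear combination additively in the abelian group of diagonal maps $L_m \mapsto L_m + p(m)A_m$ (so that $\varphi_a^{A(\lambda)}$ contributes $p(m) = am^2$ and $\psi_a^{A(\lambda)}$ contributes $p(m) = am$, realised concretely by the composition $\varphi_{\lambda a}^{A(\lambda)} \circ \psi_{(\lambda+1)a}^{A(\lambda)}$), the map $\lambda\varphi_a^{A(\lambda)} + (\lambda+1)\psi_a^{A(\lambda)}$ has $p(m) = a(\lambda m^2 + (\lambda+1)m)$, hence coincides with $\exp(-a\ad_{A_0})$; the $\lambda = \infty$ case is identical using $\varphi_a^{A(\infty)} + \psi_a^{A(\infty)}$.

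To show $\varpi_a^{A(\lambda)}$ is not inner, I would first pin down all \emph{diagonal} inner automorphisms. Matching $\prod_i \exp(c_i\ad_{A_i})$ against a diagonal map and using linear independence of the $A_{m+i}$ forces $c_i\,\omega_{A(\lambda)}(m,i) = 0$ for every $i \neq 0$; since $\omega_{A(\lambda)}(m,i) = m+i \neq 0$ whenever $i \neq 0$ and $m \neq -i$, this gives $c_i = 0$ for all $i \neq 0$. Hence the diagonal inner automorphisms realise exactly the functions $p(m) = -c(\lambda m^2 + (\lambda+1)m)$. For $\lambda \neq 0$ (including $\infty$), the map $\psi_a^{A(\lambda)}$ has $p(m) = am$, whose vanishing $m^2$-coefficient forces $c = 0$ and hence $a = 0$; for $\lambda = 0$, the map $\varphi_a^{A(0)}$ has $p(m) = am^2$, lying outside the purely linear family $p(m) = -cm$ unless $a = 0$. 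Either way $\varpi_a^{A(\lambda)}$ is not inner for $a \neq 0$.

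For $\W_B(\lambda)$ I would separate the maps by the coarse criterion. The automorphisms $\mu_\xi$ ($\xi \neq 1$), $\sigma_\alpha$ ($\alpha \neq 1$), $\tau_0$, and $\tau_{-1}$ fail it immediately: $\mu_\xi$ rescales $B(\lambda)$, while $\sigma_\alpha, \tau_0, \tau_{-1}$ act nontrivially on the quotient $\W$ (by $L_n \mapsto \alpha^n L_n$ or $L_n \mapsto -L_{-n}$). The genuinely subtle cases are $\varphi_a^{B(\lambda)}$ and $\psi_a^{B(0)}$, which do fix $B(\lambda)$ and do induce the identity on $\W$; here I would use that $B_0$ is central, so $\exp(c\ad_{B_0}) = \id$, together with $\omega_{B(\lambda)}(m,i) = i \neq 0$ for $i \neq 0$ and $m \neq -i$. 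The same linear-independence argument then shows the only diagonal inner automorphism is the identity, so the nontrivial diagonal maps $\varphi_a^{B(\lambda)}$ and $\psi_a^{B(0)}$ (for $a \neq 0$) cannot be inner. I expect this last group to be the main obstacle: precisely because $\varphi_a^{B(\lambda)}$ and $\psi_a^{B(0)}$ survive the coarse test, ruling them out requires the explicit inner-automorphism formula and careful handling of the vanishing $B_0$-contribution and of the piecewise definition of $\varphi_a^{B(\lambda)}$ at $n = 0$.
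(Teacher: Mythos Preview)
Your proposal is correct and follows essentially the same line as the paper's proof: compute $\exp(-a\ad_{A_0})$ explicitly, identify it with the given linear combination, and then use that degree-zero (``diagonal'') inner automorphisms form precisely the one-parameter family $\exp(c\ad_{A_0})$ (for $X=A$) or reduce to the identity (for $X=B$, since $B_0$ is central). Your version is more explicit than the paper's, which simply asserts that $\ad_{A_0}$ is essentially the only relevant locally nilpotent adjoint map and that there are no nontrivial degree-zero inner automorphisms of $\W_B(\lambda)$; you also handle the non-degree-zero automorphisms $\sigma_\alpha,\mu_\xi,\tau_\lambda$ via the coarse criterion, which the paper leaves implicit.
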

\begin{proof}
    Given $a \in \CC$, we have
    $$\exp(-a \ad_{A_0})(L_n) = L_n + a(n + n(n + 1)\lambda)A_n$$
    for all $n \in \ZZ$. Therefore, $\lambda \varphi_a^{A(\lambda)} + (\lambda + 1)\psi_a^{A(\lambda)} = \exp(-a \ad_{A_0})$ if $\lambda \neq \infty$, while $\varphi_a^{A(\infty)} + \psi_a^{A(\infty)} = \exp(-a \ad_{A_0})$ if $\lambda = \infty$. Since $\ad_{A_0}$ is the unique (up to multiplication by a scalar) locally nilpotent adjoint map of $\W_A(\lambda)$, it follows that $\varpi_a^{A(\lambda)}$ is not inner for all $\lambda \in \PP^1$.

    On the other hand, $B_0$ is central in $\W_B(\lambda)$, so $\exp(a \ad_{B_0}) = \id_{\W_B(\lambda)}$ for all $\lambda \in \CC$ and $a \in \CC$. Therefore, there are no inner derivations of $\W_B(\lambda)$ of degree zero, so the result follows.
\end{proof}

Despite the results of Lemma \ref{lem:linear combination is inner}, it will still be useful to consider both $\varphi_a^{A(\lambda)}$ and $\psi_a^{A(\lambda)}$.

Given the observations of Lemma \ref{lem:linear combination is inner}, we introduce the following notation.

\begin{Notation}\label{ntt:outer automorphism group}
    Let $X \in \{A,B\}$ and $\lambda \in \PP^1$. Define
    \begin{equation}\label{eq:outer automorphism general form A}
        \Psi_{k;a;\alpha,\xi}^{A(\lambda)} = \tau_\lambda^k \circ \varpi_a^{A(\lambda)} \circ \sigma_\alpha \circ \mu_\xi,
    \end{equation}
    where $k \in \ZZ/2\ZZ$, $a \in \CC$, and $\alpha, \xi \in \CC^\times$, and as before, we let $\tau_\lambda = \id_{\W_X(\lambda)}$ if $\lambda \notin \{0,-1\}$. In other words,
    $$\Psi_{k;a;\alpha,\xi}^{A(\lambda)} = \begin{cases}
        \Phi_{k;0,a;\alpha,\xi}^{A(\lambda)}, &\text{if } \lambda \neq 0, \\
        \Phi_{k;a,0;\alpha,\xi}^{A(\lambda)}, &\text{if } \lambda = 0.
    \end{cases}$$
    Define 
    $$G_X(\lambda) = \begin{cases}
        \{\Psi_{k;a;\alpha,\xi}^{A(\lambda)} \mid k \in \ZZ/2\ZZ, a \in \CC, \alpha, \xi \in \CC^\times\}, &\text{if } X = A, \\
        \{\Phi_{k;a,b;\alpha,\xi}^{B(\lambda)} \mid k \in \ZZ/2\ZZ, a,b \in \CC, \alpha, \xi \in \CC^\times\}, &\text{if } X = B,
    \end{cases}$$
    where $\Psi_{k;a,b;\alpha,\xi}^{A(\lambda)}$ is defined in \eqref{eq:outer automorphism general form A}, and $\Phi_{k;a,b;\alpha,\xi}^{B(\lambda)}$ is defined in \eqref{eq:outer automorphism general form}.
\end{Notation}

Note that, by Lemma \ref{lem:inner automorphisms}, we have $\InnAut(\W_X(\lambda)) \cap G_X(\lambda) = \{\id_{\W_X(\lambda)}\}$ for all $X \in \{A,B\}$ and $\lambda \in \PP^1$.

Having introduced all the necessary notation, we now state the main result of this section, which computes the group structure of $\Aut(\W_X(\lambda))$.

\begin{theorem}\label{thm:automorphisms}
    Let $X \in \{A,B\}$ and $\lambda \in \PP^1$. Then $G_X(\lambda)$ from Notation \ref{ntt:outer automorphism group} is a subgroup of $\Aut(\W_X(\lambda))$ and
    $$\Aut(\W_X(\lambda)) = \InnAut(\W_X(\lambda)) \rtimes G_X(\lambda).$$
    Consequently,
    $$\Aut(\W_X(\lambda)) \cong \begin{cases}
        \CC^\infty \rtimes (\ZZ/2\ZZ \ltimes (\CC \rtimes (\CC^\times \times \CC^\times))), &\text{if } X = A \text{ and } \lambda \in \{0,-1\}, \\
        \CC^\infty_0 \rtimes (\ZZ/2\ZZ \ltimes (\CC^2 \rtimes (\CC^\times \times \CC^\times))), &\text{if } X = B \text{ and } \lambda = 0, \\
        \CC^\infty_0 \rtimes (\ZZ/2\ZZ \ltimes (\CC \rtimes (\CC^\times \times \CC^\times))), &\text{if } X = B \text{ and } \lambda = -1, \\
        \CC^\infty \rtimes (\CC \rtimes (\CC^\times \times \CC^\times)), &\text{if } X = A \text{ and } \lambda \notin \{0,-1\}, \\
        \CC^\infty_0 \rtimes (\CC \rtimes (\CC^\times \times \CC^\times)), &\text{if } X = B \text{ and } \lambda \notin \{0,-1\},
    \end{cases}$$
    where $\CC^\infty$ and $\CC_0^\infty$ are defined in \eqref{eq:Cinfty} and \eqref{eq:C0infty}. Given $k,\ell \in \ZZ/2\ZZ$, $a,b,c,d,y_i,z_i \in \CC$ for $i \in \ZZ$ with only finitely many $y_i$ and $z_i$ being nonzero, and $\alpha,\beta,\xi,\zeta \in \CC^\times$, the structures of the groups above are as follows:
    \begin{align*}
        \W_B(0) &\colon &((y_i),k,a,b,\alpha,\xi) \cdot ((z_i),
        \ell,c,d,\beta,\zeta) &= ((y_i + \alpha^{\varepsilon_1 i} \xi z_{\varepsilon_1 i}), k + \ell, \varepsilon_2 a + \xi c, b + \xi d, \alpha^{\varepsilon_2} \beta, \xi \zeta); \\
        \W_A(0), \W_B(-1) &\colon &((y_i),k,a,\alpha,\xi) \cdot ((z_i),
        \ell,c,\beta,\zeta) &= ((y_i + \alpha^{\varepsilon_1 i} \xi z_{\varepsilon_1 i}), k + \ell, \varepsilon_2 a + \xi c, \alpha^{\varepsilon_2} \beta, \xi \zeta); \\
        \W_A(-1) &\colon &((y_i),k,b,\alpha,\xi) \cdot ((z_i),\ell,d,\beta,\zeta) &= ((y_i + \varepsilon_1^{\delta_i^0} \alpha^{\varepsilon_1 i} \xi z_{\varepsilon_1 i}), k + \ell, b + \xi d, \alpha^{\varepsilon_2} \beta, \xi \zeta); \\
        \W_X(\lambda) &\colon &((y_i),a,\alpha,\xi) \cdot ((z_i),c,\beta,\xi) &= ((y_i + \alpha^i \xi z_i), a + \xi c, \alpha \beta, \xi \zeta);
    \end{align*}
    where $\varepsilon_1 = (-1)^k, \varepsilon_2 = (-1)^\ell$, and $\lambda \notin \{0,-1\}$, and we assume that $y_0 = z_0 = 0$ if $X = B$.
\end{theorem}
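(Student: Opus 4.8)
The plan is to exploit the internal grading together with the fact, which I extract from Lemma~\ref{lem:inner automorphisms}, that the abelian ideal $X(\lambda)$ is intrinsically characterised as the set of elements $w \in \W_X(\lambda)$ for which $\ad_w$ is locally nilpotent. Since any automorphism $\phi$ satisfies $\phi \circ \ad_w \circ \phi^{-1} = \ad_{\phi(w)}$, it preserves local nilpotency, so $\phi(X(\lambda)) = X(\lambda)$; that is, $X(\lambda)$ is characteristic. Hence every $\phi \in \Aut(\W_X(\lambda))$ descends to an automorphism $\bar\phi$ of the quotient $\W_X(\lambda)/X(\lambda) \cong \W$. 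I would then invoke the well-known description $\Aut(\W) \cong \CC^\times \rtimes \ZZ/2\ZZ$, generated by the scalings $\bar\sigma_\alpha$ and the flip $L_n \mapsto -L_{-n}$; in particular $\bar\phi(L_0) = \pm L_0$.

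First I would normalise the diagonalisable part. Composing $\phi$ with a suitable $\sigma_\alpha^{-1}$ (from Notation~\ref{ntt:automorphisms}) reduces to the case where $\bar\phi$ is either the identity or the pure flip. Because $\bar\phi(L_0) = \pm L_0$ and $\phi$ conjugates $\ad_{L_0}$, the graded decomposition $\W_X(\lambda)_n = \CC L_n + \CC X_n$ forces $\phi(X_m) \in \CC X_{\pm m}$, so $\phi(X_m) = f(m)X_{\pm m}$. The key case-split is whether the flip occurs. If $\bar\phi = \id$, equivariance of $\phi|_{X(\lambda)}$ (which holds since $X(\lambda)$ is abelian, so the $X$-part of $\phi(L_n)$ brackets trivially with $X(\lambda)$) shows $\phi|_{X(\lambda)}$ is a $\W$-module automorphism; since $\End_\W(X(\lambda)) = \CC$, it equals some $\mu_\xi$, which I divide out. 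If instead $\bar\phi$ is the flip, the same equivariance shows $\phi|_{X(\lambda)}$ is an isomorphism from $X(\lambda)$ to its flip-twist, and writing $\phi(X_m) = f(m)X_{-m}$ and matching $[\phi(L_n), \phi(X_m)] = \phi([L_n,X_m])$ yields a linear recursion for $f$ that is solvable precisely when $\lambda \in \{0,-1\}$, recovering $\tau_0$ and $\tau_{-1}$ and explaining why the $\ZZ/2\ZZ$ factor appears only there.

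After these reductions $\phi$ fixes $X(\lambda)$ pointwise and acts as $\phi(L_n) = L_n + u_n$ with $u_n \in X(\lambda)$. Expanding $[\phi(L_n),\phi(L_m)] = \phi([L_n,L_m])$ and using that $X(\lambda)$ is abelian gives exactly the $1$-cocycle condition $L_n \cdot u_m - L_m \cdot u_n = (m-n)u_{n+m}$, so the unipotent automorphisms are parametrised by $Z^1(\W;X(\lambda))$. By Lemma~\ref{lem:inner automorphisms} the coboundaries $u_n = L_n \cdot v$ are precisely the inner automorphisms, so the outer unipotent part is $\HH^1(\W;X(\lambda))$. Using the internal grading (Theorem~\ref{thm:internal grading} applied to the module $X(\lambda)$) I may restrict to degree-zero cocycles $u_n = g(n)X_n$, whereupon the cocycle equation becomes the same recursion already solved for mixing cocycles in Propositions~\ref{prop:W_A mixing cocycles} and~\ref{prop:W_B mixing cocycles}; this identifies the space with $\CC$ for $X=A$ (any $\lambda$) and for $X=B$ with $\lambda \neq 0$, and with $\CC^2$ for $X=B$ with $\lambda=0$, represented by the $\varpi_a$ of Lemma~\ref{lem:linear combination is inner} (resp.\ $\varphi_a,\psi_a$). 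Dividing out the coboundary (inner) part reduces $\phi$ to the relevant element of $G_X(\lambda)$, completing the proof that every automorphism lies in $\InnAut(\W_X(\lambda)) \cdot G_X(\lambda)$.

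Finally I would assemble the decomposition: $\InnAut$ is normal (shown before the statement) and intersects $G_X(\lambda)$ trivially (by the degree-zero remark following Notation~\ref{ntt:outer automorphism group}), while the reduction above shows the two subgroups generate $\Aut(\W_X(\lambda))$; hence $\Aut = \InnAut \rtimes G_X(\lambda)$. Closure of $G_X(\lambda)$ under composition, and the explicit multiplication laws, then follow by directly composing the generators $\tau_\lambda, \varpi_a, \varphi_a, \psi_b, \sigma_\alpha, \mu_\xi$ via \eqref{eq:outer automorphism general form}: $\sigma_\alpha$ and $\mu_\xi$ commute and form $\CC^\times \times \CC^\times$, $\sigma_\alpha$ and $\mu_\xi$ rescale the cocycle parameters $a,b$, and $\tau_\lambda$ conjugates $\sigma_\alpha$ to $\sigma_{\alpha^{-1}}$ while sign-changing the parameters, producing the stated iterated semidirect products with $\InnAut \cong \CC^\infty$ or $\CC_0^\infty$ by Lemma~\ref{lem:group structure of inner auomorphisms}. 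I expect the main obstacles to be the flip recursion pinning down $\lambda \in \{0,-1\}$ and the careful bookkeeping of how $\sigma_\alpha,\mu_\xi,\tau_\lambda$ act on the inner part $(y_i)$ and on the parameters, which is exactly what determines the factors $\varepsilon_1^{\delta_i^0}\alpha^{\varepsilon_1 i}\xi$ in the multiplication formulae.
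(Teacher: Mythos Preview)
Your proposal is correct and follows essentially the same route as the paper, though you package several steps more conceptually. The paper proceeds via Propositions~\ref{prop:automorphism nice form}--\ref{prop:group structure of outer automorphisms}: it first composes with an inner automorphism to force a graded form $\sigma(L_n) = \alpha^n(\varepsilon L_{\varepsilon n} + \gamma_n X_{\varepsilon n})$, $\sigma(X_n) = \alpha^n \xi_n X_{\varepsilon n}$, and then solves the resulting recursions \eqref{eq:automorphism gamma}--\eqref{eq:automorphism xi} for $\gamma_n$ and $\xi_n$ directly, observing (as you do) that the $\gamma_n$-equation coincides with the mixing-cocycle recursions of Propositions~\ref{prop:W_A mixing cocycles} and~\ref{prop:W_B mixing cocycles}. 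Your two shortcuts---replacing the explicit $\xi_n$-computation by the observation $\End_\W(X(\lambda)) = \CC$, and identifying the unipotent part directly with $Z^1(\W;X(\lambda))$---are valid and arguably cleaner, but lead to exactly the same case analysis (in particular the flip recursion forcing $\lambda \in \{0,-1\}$) and the same final bookkeeping for the multiplication laws. One small point to make explicit: your grading argument for $\phi(X_m) \in \CC X_{\pm m}$ works because the $X$-component of $\phi(L_0)$ brackets trivially with $\phi(X_m) \in X(\lambda)$; the paper instead first normalises $\phi(L_0)$ via an inner automorphism (Proposition~\ref{prop:automorphism nice form}) before invoking the grading.
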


\begin{remark}
    Certainly, we have $\CC^\infty \times \CC \cong \CC_0^\infty \times \CC^2 \cong \CC^\infty_0 \times \CC \cong \CC^\infty$. We have chosen to leave these products of groups in the unsimplified forms to make the group structure clearer. Indeed, the automorphism groups of Theorem \ref{thm:automorphisms} are all isomorphic to $\ZZ/2\ZZ \ltimes (\CC^\infty \rtimes (\CC^\times \times \CC^\times))$ or $\CC^\infty \rtimes (\CC^\times \times \CC^\times)$ with appropriate semi-direct product structures, but writing the groups this way makes the group structure less clear.
\end{remark}

We have already considered inner automorphisms, so we now move on to computing arbitrary automorphisms of $\W_X(\lambda)$ to complete the proof of Theorem \ref{thm:automorphisms}. We begin by analysing the action of an automorphism of $\W_X(\lambda)$ on the abelian ideal $X(\lambda)$ of $\W_X(\lambda)$.

\begin{lemma}\label{lem:automorphism preserves X}
    Let $X \in \{A,B\}$, $\lambda \in \PP^1$, and $\sigma \in \Aut(\W_X(\lambda))$. Then $\sigma(X(\lambda)) \subseteq X(\lambda)$.
\end{lemma}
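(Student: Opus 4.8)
The plan is to exploit the intrinsic characterization of the abelian ideal $X(\lambda)$ furnished by Lemma \ref{lem:inner automorphisms}: an element $w \in \W_X(\lambda)$ has $\ad_w$ locally nilpotent if and only if $w \in X(\lambda)$. The crucial point is that this description makes no reference to the choice of basis or to the semi-direct sum decomposition $\W_X(\lambda) = \W \oplus X(\lambda)$; it is a purely Lie-theoretic condition, and is therefore automatically preserved by any Lie algebra automorphism. So the entire proof reduces to verifying that automorphisms preserve local nilpotency of adjoint maps.

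First I would recall the standard conjugation identity: for any $\sigma \in \Aut(\W_X(\lambda))$ and $w \in \W_X(\lambda)$, the relation $\sigma([w,y]) = [\sigma(w),\sigma(y)]$ gives $\ad_{\sigma(w)} = \sigma \circ \ad_w \circ \sigma^{-1}$. This is exactly the computation already used in the excerpt to show that $\InnAut(\W_X(\lambda))$ is normal in $\Aut(\W_X(\lambda))$. Iterating yields $\ad_{\sigma(w)}^n = \sigma \circ \ad_w^n \circ \sigma^{-1}$ for all $n$, from which local nilpotency is immediately seen to be preserved: given $y \in \W_X(\lambda)$, choose $n$ with $\ad_w^n(\sigma^{-1}(y)) = 0$; then $\ad_{\sigma(w)}^n(y) = \sigma(\ad_w^n(\sigma^{-1}(y))) = 0$.

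Combining these steps finishes the argument. If $w \in X(\lambda)$, then $\ad_w$ is locally nilpotent by Lemma \ref{lem:inner automorphisms}, hence $\ad_{\sigma(w)}$ is locally nilpotent, hence $\sigma(w) \in X(\lambda)$ by the same lemma; this gives $\sigma(X(\lambda)) \subseteq X(\lambda)$. There is no genuine obstacle here—the only subtlety is the recognition that Lemma \ref{lem:inner automorphisms} provides a basis-free description of $X(\lambda)$, after which the result is forced. (Applying the identical argument to $\sigma^{-1}$ upgrades the inclusion to an equality, so every automorphism restricts to an automorphism of the abelian ideal $X(\lambda)$, a fact that will be convenient in the subsequent analysis.)
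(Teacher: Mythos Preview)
Your proof is correct but takes a different route from the paper's. The paper argues in one line that $\sigma(X(\lambda))$ is an infinite-dimensional abelian ideal of $\W_X(\lambda)$, and any abelian ideal must lie inside $X(\lambda)$ (implicitly because the quotient $\W_X(\lambda)/X(\lambda) \cong \W$ is simple and non-abelian, so the image of an abelian ideal under the quotient map is zero). You instead invoke Lemma~\ref{lem:inner automorphisms} to characterise $X(\lambda)$ as exactly the set of elements with locally nilpotent adjoint action, and then use the conjugation identity $\ad_{\sigma(w)} = \sigma \circ \ad_w \circ \sigma^{-1}$ to show this property is automorphism-invariant. Both proofs work by exhibiting an intrinsic, basis-free description of $X(\lambda)$; the paper's needs only the ideal structure and simplicity of $\W$, while yours recycles a lemma already established and avoids any appeal to the quotient. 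Your remark that applying the same reasoning to $\sigma^{-1}$ upgrades the inclusion to equality is a small but useful addition not made explicit in the paper.
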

\begin{proof}
    Note that $\sigma(X(\lambda))$ is an infinite-dimensional abelian ideal of $\W_X(\lambda)$, and therefore must be contained in $X(\lambda)$.
\end{proof}

The next result shows that we can exploit the structure of the automorphism group of the Witt algebra to get information about automorphisms of $\W_X(\lambda)$.

\begin{lemma}\label{lem:induced Witt automorphism}
    Let $X \in \{A,B\}$, $\lambda \in \PP^1$, and $\sigma \in \Aut(\W_X(\lambda))$. Write $\overline{\sigma}$ for the composition
    $$\W \xrightarrow{\sigma} \W_X(\lambda) \twoheadrightarrow \W_X(\lambda)/X(\lambda) \cong \W.$$
    Then $\overline{\sigma} \in \Aut(\W)$.
\end{lemma}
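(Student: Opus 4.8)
The plan is to leverage the invariance of the ideal $X(\lambda)$ under $\sigma$ to show that $\sigma$ descends to a well-defined endomorphism of the quotient $\W_X(\lambda)/X(\lambda) \cong \W$, and then to verify that this induced map is invertible. The key input, namely that $\sigma$ carries $X(\lambda)$ into itself, is already available from Lemma \ref{lem:automorphism preserves X}, so the argument is essentially formal.

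First I would upgrade the inclusion of Lemma \ref{lem:automorphism preserves X} to an equality. That lemma gives $\sigma(X(\lambda)) \subseteq X(\lambda)$; applying it verbatim to $\sigma^{-1} \in \Aut(\W_X(\lambda))$ yields $\sigma^{-1}(X(\lambda)) \subseteq X(\lambda)$, which is equivalent to $X(\lambda) \subseteq \sigma(X(\lambda))$. Hence $\sigma(X(\lambda)) = X(\lambda)$, so $X(\lambda)$ is a $\sigma$-invariant ideal of $\W_X(\lambda)$.

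Next, since $\sigma$ is a Lie algebra homomorphism carrying the ideal $X(\lambda)$ onto itself, it descends to a well-defined Lie algebra endomorphism of the quotient $\W_X(\lambda)/X(\lambda)$. Under the fixed isomorphism $\W_X(\lambda)/X(\lambda) \cong \W$ sending the class of $L_n$ to $L_n$, this descended map is exactly the composition $\overline{\sigma}$ appearing in the statement; concretely, if $\sigma(L_n) = \sum_m a_{nm}L_m + (\text{terms in } X(\lambda))$, then $\overline{\sigma}(L_n) = \sum_m a_{nm}L_m$. By construction $\overline{\sigma}$ is a Lie algebra endomorphism of $\W$.

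Finally, to see that $\overline{\sigma}$ is bijective, I would observe that the passage $\sigma \mapsto \overline{\sigma}$ is multiplicative on the subgroup of automorphisms preserving $X(\lambda)$: because both $\sigma$ and $\sigma^{-1}$ preserve $X(\lambda)$, we obtain $\overline{\sigma} \circ \overline{\sigma^{-1}} = \overline{\sigma \circ \sigma^{-1}} = \overline{\id_{\W_X(\lambda)}} = \id_\W$, and likewise $\overline{\sigma^{-1}} \circ \overline{\sigma} = \id_\W$. Thus $\overline{\sigma}$ is invertible with inverse $\overline{\sigma^{-1}}$, so $\overline{\sigma} \in \Aut(\W)$. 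The only point requiring care is promoting the inclusion $\sigma(X(\lambda)) \subseteq X(\lambda)$ to the equality $\sigma(X(\lambda)) = X(\lambda)$, as this is precisely what guarantees that the induced map is genuinely invertible rather than merely an endomorphism.
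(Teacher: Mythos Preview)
Your proposal is correct and follows the same approach as the paper, which simply states that it is straightforward to check $\overline{\sigma^{-1}}$ is the inverse of $\overline{\sigma}$. You have spelled out the details behind that assertion: the equality $\sigma(X(\lambda)) = X(\lambda)$ (obtained by applying Lemma~\ref{lem:automorphism preserves X} to both $\sigma$ and $\sigma^{-1}$) is exactly what makes the verification $\overline{\sigma}\circ\overline{\sigma^{-1}} = \overline{\sigma^{-1}}\circ\overline{\sigma} = \id_\W$ go through.
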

\begin{proof}
    It is straightforward to check that $\overline{\sigma^{-1}}$ is the inverse of $\overline{\sigma}$.
\end{proof}

Given Lemma \ref{lem:induced Witt automorphism}, it will be useful to consider automorphisms of the Witt algebra. These are well-known.

\begin{proposition}[{\cite[Theorem 1.1]{BavulaWitt}}]\label{prop:automorphisms Witt}
    For $\alpha \in \CC^\times$, define automorphisms $\sigma_\alpha, \tau \colon \W \to \W$ by
    $$\sigma_\alpha(L_n) = \alpha^n L_n, \quad \tau(L_n) = -L_{-n},$$
    for $n \in \ZZ$. Then $\Aut(\W) = \{\sigma_\alpha \mid \alpha \in \CC^\times\} \rtimes \{\id_\W, \tau\} \cong \CC^\times \rtimes \ZZ/2\ZZ$.
\end{proposition}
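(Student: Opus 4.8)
The plan is to pin down an arbitrary $\phi \in \Aut(\W)$ by first locating $\phi(L_0)$ and then exploiting that $L_0$ acts diagonalisably via the grading. I would organise the argument into three steps, the first being by far the hardest.

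\textbf{Step 1: showing $\phi(L_0) = \pm L_0$.} Set $h = \phi(L_0)$. Since $\ad_h = \phi \circ \ad_{L_0} \circ \phi^{-1}$, the operator $\ad_h$ on $\W$ is conjugate to $\ad_{L_0}$; as $\ad_{L_0}$ is diagonalisable with one-dimensional eigenspaces $\CC L_n$, the operator $\ad_h$ is diagonalisable with spectrum exactly $\ZZ$, so every integer occurs as an eigenvalue with a genuine (finite-support) eigenvector in $\W$. Writing $h = \sum_{j=p}^{q} c_j L_j$ with $c_p, c_q \neq 0$, I would examine the leading behaviour of an eigenvector $x$ satisfying $[h,x] = \mu x$: comparing the top-degree component (in degree $q + \mathrm{top}(x)$) of $[h,x]$ with that of $\mu x$ forces $\mathrm{top}(x) = q$ whenever $q \geq 1$, and symmetrically $\mathrm{bot}(x) = p$ whenever $p \leq -1$. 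If $p < 0 < q$, every eigenvector is then confined to the finite-dimensional space spanned by $L_p,\dots,L_q$, which cannot house the infinitely many independent eigenvectors. If instead $q \geq 1$ but $p \geq 0$, the bottom-degree component gives $\mu = c_0\,\mathrm{bot}(x)$, so the spectrum lies in the half-bounded set $c_0\ZZ_{\leq q}$, contradicting that it equals all of $\ZZ$; the mirror argument excludes $p \leq -1$. Hence $p = q = 0$, i.e. $h = c_0 L_0$, and matching spectra $c_0 \ZZ = \ZZ$ forces $c_0 = \pm 1$.

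\textbf{Step 2: the case $\phi(L_0) = L_0$.} Because $\tau(L_0) = -L_0$, $\tau \in \Aut(\W)$, and $\tau^2 = \id$, after possibly replacing $\phi$ by $\tau \circ \phi$ I may assume $\phi(L_0) = L_0$. Then $\phi$ commutes with $\ad_{L_0}$ and hence preserves each eigenspace $\CC L_n$, so $\phi(L_n) = a_n L_n$ with $a_n \in \CC^\times$ and $a_0 = 1$. Substituting into $\phi([L_1,L_n]) = [\phi L_1, \phi L_n]$ gives $a_{n+1} = a_1 a_n$ for all $n \neq 1$; writing $\alpha = a_1$ and using the relations from $[L_1,L_{-1}] = -2L_0$ and $[L_{-1},L_2] = 3L_1$ to obtain $a_{-1} = \alpha^{-1}$ and $a_2 = \alpha^2$ (the value $n = 1$ being the one gap in the first recursion), a two-sided induction yields $a_n = \alpha^n$, so $\phi = \sigma_\alpha$.

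\textbf{Step 3: assembling the group.} The two steps show that every automorphism is $\sigma_\alpha$ or $\tau \circ \sigma_\alpha$, whence $\Aut(\W) = \{\sigma_\alpha \mid \alpha \in \CC^\times\} \rtimes \langle \tau \rangle$; a direct check that $\tau \circ \sigma_\alpha \circ \tau = \sigma_{\alpha^{-1}}$ identifies the $\ZZ/2\ZZ$-action as inversion, giving $\Aut(\W) \cong \CC^\times \rtimes \ZZ/2\ZZ$. The main obstacle is Step 1, and its essential point, which must be handled with care, is that finite support combined with the two-sided unboundedness of the spectrum $\ZZ$ forbids any off-diagonal ``corner'' term $c_j L_j$ with $j \neq 0$ in $h$; everything else is bookkeeping with the structure constants.
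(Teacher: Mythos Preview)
Your argument is correct. The paper itself does not prove this proposition; it simply quotes it from \cite{BavulaWitt}, where the result is obtained (in greater generality, for the Witt algebras $W_n$) via the identification $\W=\Der(\CC[t,t^{-1}])$ and the classification of automorphisms of the Laurent polynomial ring. Your approach is genuinely different and more intrinsic: you stay entirely inside the abstract Lie algebra and pin down $\phi(L_0)$ by a spectral/support analysis of $\ad_{\phi(L_0)}$, then read off the rest from the grading. This has the advantage of not invoking any external structure, at the cost of the slightly delicate casework in Step~1.

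The only point worth tightening in a formal write-up is the end of Step~1. When you say the spectrum lies in the ``half-bounded'' set $c_0\ZZ_{\leq q}$, bear in mind that $c_0\in\CC$ is \emph{a priori} complex (and $c_0=0$ when $p\geq 1$, which your formula $\mu=c_0\,\mathrm{bot}(x)$ correctly absorbs). The contradiction is still immediate---$\ZZ\subseteq c_0\ZZ_{\leq q}$ forces $1=c_0 m$ for some integer $m\leq q$, whence $n=c_0(nm)$ requires $nm\leq q$ for every $n\in\ZZ$, which is absurd---but this deserves one explicit line rather than the informal phrase ``half-bounded''. Everything else is sound.
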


The next result shows that, after composing with an inner automorphism, we can greatly simplify the form which an automorphism of $\W_X(\lambda)$ can take.

\begin{proposition}\label{prop:automorphism nice form}
    Let $X \in \{A,B\}$, $\lambda \in \PP^1$, and $\sigma \in \Aut(\W_X(\lambda))$. Then there exists $\rho \in \InnAut(\W_X(\lambda))$ such that, for all $n \in \ZZ$,
    $$(\rho \circ \sigma)(L_n) = \varepsilon \alpha^n L_{\varepsilon n} + \gamma_n X_{\varepsilon n}, \quad (\rho \circ \sigma)(X_n) = \xi_n X_{\varepsilon n},$$
    for some $\varepsilon \in \{\pm 1\}$, $\alpha, \xi_n \in \CC^\times$, $\gamma_n \in \CC$. Furthermore, if $X = A$, then $\gamma_0 = 0$.
\end{proposition}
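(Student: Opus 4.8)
The plan is to peel $\sigma$ apart one layer at a time, exploiting the semidirect-product structure together with the internal $\ZZ$-grading by $\ad_{L_0}$. First I would record that $\sigma(X(\lambda)) = X(\lambda)$ by Lemma \ref{lem:automorphism preserves X}, and that $\sigma$ descends to an automorphism $\overline{\sigma}$ of $\W \cong \W_X(\lambda)/X(\lambda)$ by Lemma \ref{lem:induced Witt automorphism}. By Proposition \ref{prop:automorphisms Witt}, $\overline{\sigma}(L_n) = \varepsilon\alpha^n L_{\varepsilon n}$ for some $\varepsilon \in \{\pm 1\}$ and $\alpha \in \CC^\times$; lifting this back to $\W_X(\lambda)$ gives
$$\sigma(L_n) = \varepsilon\alpha^n L_{\varepsilon n} + v_n, \qquad v_n \in X(\lambda).$$

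Next I would pin down the action on $X(\lambda)$ using the eigenvalue relation $[L_0,X_n] = nX_n$. Applying $\sigma$ and writing $\sigma(L_0) = \varepsilon L_0 + v_0$, the term $v_0$ drops out because $X(\lambda)$ is abelian, leaving $n\sigma(X_n) = \varepsilon[L_0,\sigma(X_n)]$. Expanding $\sigma(X_n) = \sum_k c_k X_k$ and comparing graded components forces $c_k(\varepsilon k - n) = 0$, hence $\sigma(X_n) = \xi_n X_{\varepsilon n}$ for some $\xi_n \in \CC$; bijectivity of $\sigma|_{X(\lambda)}$ then gives $\xi_n \in \CC^\times$. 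I would then use an inner automorphism to clean up $\sigma(L_0)$: since $\exp(c\ad_{X_i})$ fixes $X(\lambda)$ and sends $L_0 \mapsto L_0 - c\,\omega_{X(\lambda)}(0,i)X_i = L_0 - ci X_i$ by Lemma \ref{lem:inner automorphisms}, a suitable finite product $\rho = \prod_{i \neq 0}\exp(c_i \ad_{X_i})$ cancels every $X_i$-component ($i \neq 0$) of $v_0$, yielding $(\rho\circ\sigma)(L_0) = \varepsilon L_0 + \gamma_0 X_0$. Crucially, $\rho$ preserves the forms already established, since it fixes $X(\lambda)$ and acts trivially on the quotient $\W$.

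Finally, setting $\sigma' := \rho\circ\sigma$, I would run the grading argument on the relation $[\sigma'(L_0),\sigma'(L_n)] = n\sigma'(L_n)$. Writing $\sigma'(L_n) = \varepsilon\alpha^n L_{\varepsilon n} + \sum_k d_k^{(n)}X_k$, expanding the bracket produces an extra coupling term $-\varepsilon\alpha^n\gamma_0\,\omega_{X(\lambda)}(\varepsilon n, 0)X_{\varepsilon n}$ arising from $[X_0, L_{\varepsilon n}]$. Comparing $X_k$-coefficients forces $d_k^{(n)} = 0$ for $k \neq \varepsilon n$, so $\sigma'(L_n) = \varepsilon\alpha^n L_{\varepsilon n} + \gamma_n X_{\varepsilon n}$, while the $k = \varepsilon n$ comparison yields the single scalar identity $\gamma_0\,\omega_{X(\lambda)}(\varepsilon n, 0) = 0$ for all $n$.

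This last identity is exactly where the two cases diverge, and it is the heart of the "furthermore" claim. For $X = B$ one checks $\omega_{B(\lambda)}(\varepsilon n, 0) = 0$ for every $n$, so $\gamma_0$ is unconstrained; for $X = A$ one has $\omega_{A(\lambda)}(\varepsilon n, 0) = \varepsilon n\bigl(1 + (\varepsilon n + 1)\lambda\bigr)$, which is nonzero for all but finitely many $n$, so choosing any such $n$ forces $\gamma_0 = 0$. I expect the main obstacle to be purely computational rather than conceptual: correctly bookkeeping the coupling term generated by $\gamma_0 X_0$ in $\sigma'(L_0)$, and verifying the $\omega_{X(\lambda)}(\varepsilon n, 0)$ evaluations in the $A$ versus $B$ cases (including the $\lambda = \infty$ convention). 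Everything else reduces to matching homogeneous components in the grading.
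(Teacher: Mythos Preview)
Your proposal is correct and follows essentially the same approach as the paper's proof: quotient to $\W$ to determine the $L$-part up to elements of $X(\lambda)$, apply an inner automorphism $\rho = \prod_{i\neq 0}\exp(c_i\ad_{X_i})$ to reduce $\sigma(L_0)$ to $\varepsilon L_0 + \gamma_0 X_0$, then use the $\ad_{L_0}$-grading on both $L_n$ and $X_n$ to force the claimed form and extract the constraint $\gamma_0\,\omega_{X(\lambda)}(\varepsilon n,0)=0$. The only cosmetic difference is that you pin down $\sigma(X_n)=\xi_n X_{\varepsilon n}$ \emph{before} composing with $\rho$ (using that $v_0$ commutes with $X(\lambda)$), whereas the paper does this after; since $\rho$ fixes $X(\lambda)$ pointwise the two orderings are equivalent.
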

\begin{proof}
    By Lemma \ref{lem:induced Witt automorphism} and Proposition \ref{prop:automorphisms Witt}, we have $\overline{\sigma} = \tau^k \circ \sigma_{\alpha}$ for some $\alpha \in \CC^\times$ and $k \in \{0,1\}$ (where we write $\overline{\sigma}$ as in the notation of Lemma \ref{lem:induced Witt automorphism}). Letting $\varepsilon = (-1)^k$, we therefore see that
    \begin{equation}\label{eq:automorphism general form}
        \sigma(L_n) = \varepsilon \alpha^n L_{\varepsilon n} + \sum_{i \in \ZZ} \beta_i^{(n)} X_i,
    \end{equation}
    for some $\beta_i^{(n)} \in \CC$, where, for each $n \in \ZZ$, only finitely many $\beta_i^{(n)}$ are nonzero.

    Let $\rho = \prod_{i \in \ZZ \nonzero} \exp(\frac{\varepsilon \beta_i^{(0)}}{i} \ad_{X_i}) \in \InnAut(\W_X(\lambda))$. By Lemma \ref{lem:inner automorphisms}, it follows that $\rho(X_n) = X_n$ for all $n \in \ZZ$ and
    \begin{equation}\label{eq:rho}
        \rho(L_0) = L_0 - \varepsilon \sum_{i\in \ZZ \nonzero} \beta_i^{(0)}X_i,
    \end{equation}
    since $[L_0,X_n] = nX_n$ for both $X = A$ and $X = B$. Now, \eqref{eq:automorphism general form} and \eqref{eq:rho} imply that
    \begin{align*}
        \rho(\sigma(L_0)) &= \varepsilon \rho(L_0) + \sum_{i \in \ZZ} \beta_i^{(n)} X_i \\
        &= \varepsilon L_0 - \sum_{i\in \ZZ \nonzero} \beta_i^{(0)}X_i + \sum_{i \in \ZZ} \beta_i^{(n)} X_i \\
        &= \varepsilon L_0 + \beta_0^{(0)} X_0.
    \end{align*}
    Letting $\sigma' = \rho \circ \sigma$, we can proceed as before to conclude that
    \begin{equation}\label{eq:sigma'}
        \sigma'(L_n) = \varepsilon \alpha^n L_{\varepsilon n} + \sum_{i \in \ZZ} \gamma_i^{(n)} X_i,
    \end{equation}
    for some $\gamma_i^{(n)} \in \CC$, where, for each $n \in \ZZ$, only finitely many $\gamma_i^{(n)}$ are nonzero. By construction, we have $\gamma_0^{(0)} = \beta_0^{(0)}$ and $\gamma_i^{(0)} = 0$ for $i \in \ZZ \nonzero$.

    We have $n\sigma'(L_n) = \sigma'([L_0,L_n]) = [\sigma'(L_0),\sigma'(L_n)]$ and thus, using \eqref{eq:sigma'} to expand,
    \begin{equation}\label{eq:sigma'([L0,Ln])}
        \varepsilon n \alpha^n L_{\varepsilon n} + n \sum_{i \in \ZZ} \gamma_i^{(n)} X_i = \varepsilon n \alpha^n L_{\varepsilon n} + \varepsilon \sum_{i \in \ZZ} i \gamma_i^{(n)} X_i - \varepsilon \alpha^n \gamma_0^{(0)} \omega_{X(\lambda)}(\varepsilon n,0) X_{\varepsilon n}.
    \end{equation}
    It follows that, for $m \in \ZZ \setminus \{\varepsilon n\}$, we have $n \gamma_m^{(n)} = \varepsilon m \gamma_m^{(n)}$, so $(m - \varepsilon n)\gamma_m^{(n)} = 0$. Therefore, $\gamma_m^{(n)} = 0$ for all $m \in \ZZ \setminus \{\varepsilon n\}$. Furthermore, comparing coefficients of $X_{\varepsilon n}$ in \eqref{eq:sigma'([L0,Ln])}, we see that
    \begin{equation}\label{eq:automorphism gamma0}
        \omega_{X(\lambda)}(n,0) \gamma_0^{(0)} = 0
    \end{equation}
    for all $n \in \ZZ$. Note that $\omega_{B(\lambda)}(n,0) = 0$ for all $n \in \ZZ$, so \eqref{eq:automorphism gamma0} does not give anything in the case $X = B$. On the other hand, if $X = A$, then \eqref{eq:automorphism gamma0} implies that $\gamma_0^{(0)} = 0$.
    
    Letting $\gamma_n = \gamma_{\varepsilon n}^{(n)}$, we have shown that
    \begin{equation}\label{eq:sigma' L}
        \sigma'(L_n) = \varepsilon \alpha^n L_{\varepsilon n} + \gamma_n X_{\varepsilon n}.
    \end{equation}
    Now, by Lemma \ref{lem:automorphism preserves X}, we have
    \begin{equation}\label{eq:sigma' X}
        \sigma'(X_n) = \sum_{i \in \ZZ} \xi_i^{(n)} X_i,
    \end{equation}
    for some $\xi_i^{(n)} \in \CC$, where, for each $n \in \ZZ$, only finitely many $\xi_i^{(n)}$ are nonzero. Since $[L_0,X_n] = nX_n$, we have $[\sigma'(L_0),\sigma'(X_n)] = n\sigma'(X_n)$. Therefore, it follows from \eqref{eq:sigma' L} and \eqref{eq:sigma' X} that
    $$\varepsilon \sum_{i \in \ZZ} i \xi_i^{(n)} X_i = n\sum_{i \in \ZZ} \xi_i^{(n)} X_i.$$
    Hence, $(m - \varepsilon n)\xi_m^{(n)} = 0$ for all $m \in \ZZ$, so $\xi_m^{(n)} = 0$ for all $m \in \ZZ \setminus \{\varepsilon n\}$. Thus, letting $\xi_n = \xi_{\varepsilon n}^{(n)}$, we have
    $$\sigma'(X_n) = \xi_n X_{\varepsilon n}.$$
    This concludes the proof.
\end{proof}

We now show that, together with inner automorphisms, the automorphisms defined in Notation \ref{ntt:automorphisms} generate the whole automorphism group of $\W_X(\lambda)$.

\begin{proposition}\label{prop:outer automorphisms}
    Let $X \in \{A,B\}$, $\lambda \in \PP^1$, and $\sigma \in \Aut(\W_X(\lambda))$. Then there exists $\rho \in \InnAut(\W_X(\lambda))$ such that $\rho \circ \sigma = \Phi_{k;a,b;\alpha,\xi}^{X(\lambda)}$ for some $k \in \ZZ/2\ZZ$, $a,b \in \CC$, and $\alpha, \xi \in \CC^\times$, where $\Phi_{k;a,b;\alpha,\xi}^{X(\lambda)}$ is defined in \eqref{eq:outer automorphism general form}.

\end{proposition}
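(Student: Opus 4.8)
The plan is to start from the normal form already furnished by Proposition \ref{prop:automorphism nice form}: after composing $\sigma$ with the inner automorphism $\rho$ produced there, we may assume $\sigma' = \rho \circ \sigma$ satisfies
\[
\sigma'(L_n) = \varepsilon\alpha^n L_{\varepsilon n} + \gamma_n X_{\varepsilon n}, \qquad \sigma'(X_n) = \xi_n X_{\varepsilon n},
\]
for some $\varepsilon \in \{\pm 1\}$, $\alpha, \xi_n \in \CC^\times$, and $\gamma_n \in \CC$, with $\gamma_0 = 0$ when $X = A$. It then remains to determine $\xi_n$ and $\gamma_n$ completely and to recognise the result as one of the maps $\Phi^{X(\lambda)}_{k;a,b;\alpha,\xi}$ of \eqref{eq:outer automorphism general form} with $\varepsilon = (-1)^k$. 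Since Proposition \ref{prop:automorphism nice form} only used the bracket $[L_0,-]$, the remaining information comes from requiring that $\sigma'$ respect $[L_n,X_m] = \omega_{X(\lambda)}(n,m)X_{n+m}$ and $[L_n,L_m] = (m-n)L_{n+m}$; the relation $[X_n,X_m]=0$ is automatic because $X(\lambda)$ is abelian.

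First I would impose $\sigma'([L_n,X_m]) = [\sigma'(L_n),\sigma'(X_m)]$. As $[X_{\varepsilon n},X_{\varepsilon m}]=0$, the coefficients $\gamma_n$ drop out and comparing coefficients of $X_{\varepsilon(n+m)}$ gives
\[
\omega_{X(\lambda)}(n,m)\,\xi_{n+m} = \varepsilon\alpha^n\xi_m\,\omega_{X(\lambda)}(\varepsilon n,\varepsilon m).
\]
For indices avoiding the Kronecker deltas (that is, $m \neq 0$ for $X=A$ and $n+m \neq 0$ for $X=B$) this collapses to $\xi_{n+m} = \alpha^n\xi_m$, forcing $\xi_n = \xi\alpha^n$ for all $n \neq 0$ with $\xi \defeq \xi_1/\alpha$. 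The delta-carrying boundary cases ($m=0$ for $A$, $m=-n$ for $B$) produce a single polynomial identity in $n$; comparing its coefficient of $n$ with its constant term shows that $\varepsilon = -1$ is consistent only when $\lambda \in \{0,-1\}$ (matching the $\ZZ/2\ZZ$ factor), and simultaneously determines $\xi_0$ in terms of $\xi$, with $\xi_0 = -\xi$ exactly in the case $\lambda = -1, \varepsilon = -1$ (the sign twist built into $\tau_{-1}$) and $\xi_0 = \xi$ otherwise. The case $\lambda = \infty$, where $n(n+1)\lambda$ is read as $n^2$, is handled by the same computation.

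Next I would impose $\sigma'([L_n,L_m]) = [\sigma'(L_n),\sigma'(L_m)]$. The $L$-coefficients match automatically, and comparing coefficients of $X_{\varepsilon(n+m)}$ yields the recursion
\[
(m-n)\gamma_{n+m} = \varepsilon\alpha^n\gamma_m\,\omega_{X(\lambda)}(\varepsilon n,\varepsilon m) - \varepsilon\alpha^m\gamma_n\,\omega_{X(\lambda)}(\varepsilon m,\varepsilon n).
\]
Substituting $\gamma_n = \alpha^n g_n$ and restricting to generic indices, this reduces \emph{for $X=A$} to exactly the mixing-cocycle equation \eqref{eq:simple mixing cocycle B} for $\W_B(\lambda)$, and \emph{for $X=B$} to the $\W_A$ mixing-cocycle equation \eqref{eq:mixing cocycle A}; this is the manifestation of the $A$--$B$ adjunction. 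Hence, by the analyses already carried out in Propositions \ref{prop:W_B mixing cocycles} and \ref{prop:W_A mixing cocycles}, $g_n$ is a quadratic $an^2 + bn$ (for $X=A$, using $g_0 = 0$) or affine $an + b$ (for $X=B$). For $X=A$ the delta-carrying cases of the recursion are trivially satisfied thanks to $\gamma_0 = 0$, leaving $a,b$ free; for $X=B$ the case $m=-n$ gives the remaining constraints, forcing $a\lambda = 0$ (so the $n^2$-term survives only when $\lambda=0$) and fixing $\gamma_0 = b(\lambda+1)$, which matches the special value $\varphi^{B(\lambda)}_a(L_0) = L_0 + (\lambda+1)aB_0$.

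To conclude, I would read off $k$ from $\varepsilon = (-1)^k$ and match the determined coefficients against a direct computation of $\Phi^{X(\lambda)}_{k;a,b;\alpha,\xi}$ on $L_n$ and $X_n$, which shows $\sigma' = \Phi^{X(\lambda)}_{k;a,b;\alpha,\xi}$ for the appropriate $a,b$ (with $\psi_b = \id$ when $\lambda \neq 0$ in the $B$ case). The same matching computation verifies en route that $\sigma_\alpha$, $\mu_\xi$, $\tau_\lambda$, $\varphi_a$ and $\psi_b$ preserve the brackets, so that the maps of Notation \ref{ntt:automorphisms} are genuinely automorphisms, as promised there. I expect the main obstacle to be the careful bookkeeping of the Kronecker-delta boundary cases: these are simultaneously responsible for excluding $\varepsilon = -1$ outside $\lambda \in \{0,-1\}$, for the $\tau_{-1}$ sign twist, and (in the $B$ case) for pinning down $\gamma_0$ and killing the $n^2$-term, so they must be checked separately for $\lambda$ finite nonzero, $\lambda = 0$, $\lambda = -1$, and $\lambda = \infty$.
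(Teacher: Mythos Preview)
Your approach is essentially the same as the paper's: start from the normal form of Proposition~\ref{prop:automorphism nice form}, derive the two relations \eqref{eq:automorphism gamma} and \eqref{eq:automorphism xi} from compatibility with $[L_n,L_m]$ and $[L_n,X_m]$, and then recognise the $\gamma$-equation as the mixing-cocycle equation for the \emph{other} family (the $A$/$B$ swap), invoking Propositions~\ref{prop:W_A mixing cocycles} and~\ref{prop:W_B mixing cocycles}. The only organisational difference is that the paper absorbs $\alpha^n$ into $\gamma_n,\xi_n$ from the outset (your substitution $\gamma_n=\alpha^n g_n$) and, for $X=B$, performs the change of variables $k=\varepsilon n$, $\gamma'_k=\gamma_{\varepsilon k}$ so that the resulting equation matches \eqref{eq:mixing cocycle A} \emph{including} its delta terms, allowing Proposition~\ref{prop:W_A mixing cocycles} to be cited wholesale rather than redoing the delta case by hand.

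One slip: in your $X=B$ discussion you write that the delta case forces $a\lambda=0$ ``so the $n^2$-term survives only when $\lambda=0$'', but your $g_n=an+b$ has no $n^2$-term; you mean the linear term $an$. With that corrected, your constraint $a\lambda=0$ and the value $g_0=b(\lambda+1)$ are exactly right and give the $\varphi^{B(\lambda)}$-shape. Also, when you cite Proposition~\ref{prop:W_A mixing cocycles} for the generic part of the $X=B$ equation, note that that proposition already incorporates the delta constraints and gives a one-dimensional answer for $\lambda\neq 0$; what you are actually using is only the recursion argument inside its proof, which on generic indices yields the two-parameter family $an+b$ (with $g_0$ undetermined), before your separate delta analysis cuts it down.
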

\begin{proof}
    Let $\sigma \in \Aut(\W_X(\lambda))$. By Proposition \ref{prop:automorphism nice form}, possibly by replacing $\sigma$ with $\rho \circ \sigma$ for some $\rho \in \InnAut(\W_X(\lambda))$, we may assume that
    \begin{equation}\label{eq:sigma nice form}
        \sigma(L_n) = \alpha^n (\varepsilon L_{\varepsilon n} + \gamma_n X_{\varepsilon n}), \quad \sigma(X_n) = \alpha^n \xi_n X_{\varepsilon n},
    \end{equation}
    for some $\varepsilon \in \{\pm 1\}$, $\alpha, \xi_n \in \CC^\times$, $\gamma_n \in \CC$, with $\gamma_0 = 0$ if $X = A$.

    Since $[L_n,L_m] = (m - n)L_{n+m}$, we have $[\sigma(L_n),\sigma(L_m)] = (m - n)\sigma(L_{n+m})$. Therefore, applying \eqref{eq:sigma nice form} and comparing coefficients of $X_{\varepsilon n}$, we deduce that
    \begin{equation}\label{eq:automorphism gamma}
        \varepsilon(\omega_{X(\lambda)}(\varepsilon n, \varepsilon m)\gamma_m - \omega_{X(\lambda)}(\varepsilon m, \varepsilon n)\gamma_n) = (m - n)\gamma_{n+m}
    \end{equation}
    for all $n,m \in \ZZ$. Similarly, we have $[\sigma(L_n),\sigma(X_m)] = \omega_{X(\lambda)}(n,m)\sigma(X_{n+m})$, so we can apply \eqref{eq:sigma nice form} and simplify to get
    \begin{equation}\label{eq:automorphism xi}
        \varepsilon \omega_{X(\lambda)}(\varepsilon n, \varepsilon m)\xi_m = \omega_{X(\lambda)}(n,m)\xi_{n+m}
    \end{equation}
    for all $n,m \in \ZZ$. Furthermore, any bijective linear map $\sigma$ defined as in \eqref{eq:sigma nice form} which satisfies \eqref{eq:automorphism gamma} and \eqref{eq:automorphism xi} is an automorphism of $\W_X(\lambda)$.
    
    We now consider the cases $X = A$ and $X = B$ separately.

    \case{1}{$X = A$.}
    
    Recall that in this case, we have $\gamma_0 = 0$. It follows from \eqref{eq:automorphism gamma} and the definition of $\omega_{A(\lambda)}$ that
    \begin{equation}\label{eq:automorphism gamma A}
        (n + m + n(\varepsilon n + 1)\lambda \delta_m^0)\gamma_m - (n + m + m(\varepsilon m + 1)\lambda \delta_n^0)\gamma_n = (m - n)\gamma_{n+m}.
    \end{equation}
    Since $\gamma_0 = 0$, we have $\delta_n^0 \gamma_n = 0$ for all $n \in \ZZ$. Therefore, we can simplify \eqref{eq:automorphism gamma A} as follows:
    \begin{equation}\label{eq:automorphism gamma A simple}
        (n + m)(\gamma_m - \gamma_n) = (m - n)\gamma_{n+m}
    \end{equation}
    for all $n \in \ZZ$. Note that \eqref{eq:automorphism gamma A simple} has the same form as \eqref{eq:simple mixing cocycle B}, which we already solved in the proof of Proposition \ref{prop:W_B mixing cocycles}: we have $\gamma_n = a n^2 + b n$ for some $a,b \in \CC$ (in fact, $a = \frac{1}{2}\gamma_2 - \gamma_1$ and $b = 2\gamma_1 - \frac{1}{2}\gamma_2$).

    We now compute $\xi_n$. By \eqref{eq:automorphism xi}, we have
    \begin{equation}\label{eq:automorphism xi A}
        (n + m + n(\varepsilon n + 1)\lambda \delta_m^0)\xi_m = (n + m + n(n + 1)\lambda \delta_m^0)\xi_{n+m}
    \end{equation}
    for all $n,m \in \ZZ$. It follows from \eqref{eq:automorphism xi A} that $\xi_m = \xi_{n+m}$ for all $m \in \ZZ \nonzero$ and $n \in \ZZ \setminus \{-m\}$. Letting $\xi = \xi_1$, we therefore conclude that $\xi_n = \xi$ for all $n \in \ZZ \nonzero$.

    To compute $\xi_0$, we substitute $n \in \ZZ \nonzero$ and $m = 0$ into \eqref{eq:automorphism xi A} to get
    \begin{equation}\label{eq:xi0 A}
        (n + n(\varepsilon n + 1)\lambda)\xi_0 = (n + n(n + 1)\lambda)\xi
    \end{equation}
    for all $n \in \ZZ$. If $\varepsilon = 1$, then \eqref{eq:xi0 A} implies that $\xi_0 = \xi$. It follows that $\sigma = \varphi_a^{A(\lambda)} \circ \psi_b^{A(\lambda)} \circ \sigma_\alpha \circ \mu_\xi$ (in other words, $\sigma = \Phi^{A(\lambda)}_{0,a,b,\alpha,\xi}$).
    
    Now assume that $\varepsilon = -1$. In this case, if $\lambda \neq \infty$, \eqref{eq:xi0 A} becomes
    $$(\xi + \xi_0)\lambda n + (\xi - \xi_0)(\lambda + 1) = 0$$
    for all $n \in \ZZ \nonzero$. Therefore, $(\xi + \xi_0)\lambda = 0$ and $(\xi - \xi_0)(\lambda + 1) = 0$. Similarly, if $\lambda = \infty$, then \eqref{eq:xi0 A} gives $(\xi + \xi_0)n + \xi - \xi_0 = 0$ for all $n \in \ZZ \nonzero$, from which it follows that $\xi_0 = \xi = 0$. We conclude that, when $\varepsilon = -1$, there are three possibilities:
    \begin{enumerate}[(a)]
        \item $\lambda = 0$ and $\xi_0 = \xi$;
        \item $\lambda = -1$ and $\xi_0 = -\xi$;
        \item $\lambda \in \PP^1 \setminus \{0,-1\}$ and $\xi_0 = \xi = 0$.
    \end{enumerate}
    However, $\sigma$ is an automorphism, so we cannot have $\xi = 0$. Therefore, the case $\varepsilon = -1$ is not possible if $\lambda \in \PP^1 \setminus \{0,-1\}$. Therefore, in the two cases where $\varepsilon = -1$ is possible ($\lambda = 0$ or $\lambda = -1$), we get $\sigma = \tau_\lambda \circ \varphi_a^{A(\lambda)} \circ \psi_b^{A(\lambda)} \circ \sigma_\alpha \circ \mu_\xi$ (in other words, $\sigma = \Phi_{1;a,b;\alpha,\xi}^{A(\lambda)}$). This concludes the proof when $X = A$.

    \case{2}{$X = B$.}

    Let $\gamma_n' = \gamma_{\varepsilon n}$ and $k = \varepsilon n$, $\ell = \varepsilon m$. Then \eqref{eq:automorphism gamma} becomes
    \begin{equation}\label{eq:automorphism gamma' B}
        \omega_{B(\lambda)}(k,\ell)\gamma_\ell' - \omega_{B(\lambda)}(\ell,k)\gamma_k' = (\ell - k)\gamma_{k+\ell}',
    \end{equation}
    for all $k,\ell \in \ZZ$. By \eqref{eq:automorphism gamma' B} and the definition of $\omega_{B(\lambda)}$, we have
    \begin{equation}\label{eq:automorphism gamma B simple}
        (\ell - k(k + 1)\lambda \delta_{k+\ell}^0)\gamma_\ell' - (k - \ell(\ell + 1)\lambda \delta_{k+\ell}^0)\gamma_k' = (\ell - k)\gamma_{k+\ell}'.
    \end{equation}
    Note that \eqref{eq:automorphism gamma B simple} has the same form as \eqref{eq:mixing cocycle A}, so we already know the solutions, thanks to the proof of Proposition \ref{prop:W_A mixing cocycles}: if $\lambda = 0$, then $\gamma_n' = a'n + b$ for some $a',b \in \CC$, while for $\lambda \notin \{0,\infty\}$, we get
    $$\gamma_n' = \begin{cases}
        (\lambda + 1)c, &\text{if } n = 0; \\
        c, &\text{if } n \neq 0.
    \end{cases}$$
    for some $c \in \CC$. For $\lambda = \infty$, we get $\gamma_n' = c$ for all $n \in \ZZ$. Therefore, letting $a = \varepsilon a'$, we conclude that $\gamma_n = an + b$ if $\lambda = 0$, while
    $$\gamma_n = \begin{cases}
        (\lambda + 1)c, &\text{if } n = 0; \\
        c, &\text{if } n \neq 0.
    \end{cases}$$
    if $\lambda \notin \{0,\infty\}$, and $\gamma_n = c$ for all $n \in \ZZ$ if $\lambda = \infty$.

    We finish by computing $\xi_n$. By \eqref{eq:automorphism xi}, we get
    \begin{equation}\label{eq:automorphism xi B}
        (m - n(\varepsilon n + 1)\lambda \delta_{n+m}^0)\xi_m = (m - n(n + 1)\lambda \delta_{n+m}^0)\xi_{n+m}
    \end{equation}
    for all $n,m \in \ZZ$. Similarly to before, letting $\xi = \xi_1$, \eqref{eq:automorphism xi B} immediately implies that $\xi_n = \xi$ for all $n \in \ZZ \nonzero$.
    
    Finally, in order to compute $\xi_0$, we substitute $m \in \ZZ \nonzero$ and $n = -m$ into \eqref{eq:automorphism xi B}:
    \begin{equation}\label{eq:xi0 B}
        (m - m(\varepsilon m - 1)\lambda)\xi = (m - m(m - 1)\lambda)\xi_0
    \end{equation}
    for all $m \in \ZZ \nonzero$. If $\varepsilon = 1$, then it follows from \eqref{eq:xi0 B} that $\xi_0 = \xi$. Hence,
    $$\sigma = \begin{cases}
        \varphi_a^{B(0)} \circ \psi_b^{B(0)} \circ \sigma_\alpha \circ \mu_\xi, &\text{if } \lambda = 0, \\
        \varphi_c^{B(\lambda)} \circ \sigma_\alpha \circ \mu_\xi, &\text{if } \lambda \neq 0.
    \end{cases}$$
    In other words, $\sigma = \Phi_{0;a,b,\alpha,\xi}^{B(0)}$ if $\lambda = 0$, while $\sigma = \Phi_{0;c,0;\alpha,\xi}^{B(\lambda)}$ if $\lambda \neq 0$.
    
    Therefore, it remains to consider $\varepsilon = -1$. In this case, if $\lambda \neq \infty$, \eqref{eq:xi0 B} becomes
    $$(\xi + \xi_0)\lambda m + (\xi - \xi_0)(\lambda + 1) = 0$$
    for all $m \in \ZZ \nonzero$. For $\lambda = \infty$, we instead get $(\xi + \xi_0)m + \xi - \xi_0 = 0$ for all $m \in \ZZ \nonzero$. Proceeding as we did in Case 1, we conclude that there are three possibilities when $\varepsilon = -1$:
    \begin{enumerate}[(a)]
        \item $\lambda = 0$ and $\xi_0 = \xi$;
        \item $\lambda = -1$ and $\xi_0 = -\xi$;
        \item $\lambda \in \PP^1 \setminus \{0,-1\}$ and $\xi_0 = \xi = 0$.
    \end{enumerate}
    It follows that, as before, the case $\varepsilon = -1$ is not possible if $\lambda \in \PP^1 \setminus \{0,-1\}$. Therefore, in the cases where $\varepsilon = -1$ is possible ($\lambda = 0$ or $\lambda = -1$), we get
    $$\sigma = \begin{cases}
        \tau_0 \circ \varphi_a^{B(0)} \circ \psi_b^{B(0)} \circ \sigma_\alpha \circ \mu_\xi, &\text{if } \lambda = 0, \\
        \tau_{-1} \circ \varphi_c^{B(-1)} \circ \sigma_\alpha \circ \mu_\xi, &\text{if } \lambda = -1.
    \end{cases}$$
    In other words, $\sigma = \Phi_{1;a,b;\alpha,\xi}^{B(0)}$ if $\lambda = 0$, while $\sigma = \Phi_{1;c,0;\alpha,\xi}^{B(-1)}$ if $\lambda = -1$, which concludes the proof.
\end{proof}

\begin{remark}
    As we saw before when we noted that $A(\lambda)$ is the adjoint of $B(\lambda)$, once again we see the close relationship between $\W_A(\lambda)$ and $\W_B(\lambda)$ in the proof of Proposition \ref{prop:outer automorphisms}: when computing automorphisms of $\W_A(\lambda)$, we got an equation \eqref{eq:automorphism gamma A simple} that we already saw when we computed central extensions of $\W_B(\lambda)$ \eqref{eq:simple mixing cocycle B}, and similarly for automorphisms of $\W_B(\lambda)$ with \eqref{eq:automorphism gamma B simple} and \eqref{eq:mixing cocycle A}. We will see this once again when we compute derivations of $\W_X(\lambda)$ in Section \ref{sec:derivations}.
\end{remark}

The next result computes the group structure of the subgroup of $\Aut(\W_X(\lambda))$ generated by the automorphisms defined in Notation \ref{ntt:automorphisms}.

\begin{proposition}\label{prop:group structure of outer automorphisms}
    Let $X \in \{A,B\}$ and $\lambda \in \PP^1$. Let $G_X(\lambda)$ be as in Notation \ref{ntt:outer automorphism group}. Then $G_X(\lambda)$ is a subgroup of $\Aut(\W_X(\lambda))$ and
    $$G_X(\lambda) \cong \begin{cases}
        \ZZ/2\ZZ \ltimes (\CC^2 \rtimes (\CC^\times \times \CC^\times)), &\text{if } X = B \text{ and } \lambda = 0, \\
        \ZZ/2\ZZ \ltimes (\CC \rtimes (\CC^\times \times \CC^\times)), &\text{if } X = A \text{ and } \lambda \in \{0,-1\}, \text{ or } X = B \text{ and } \lambda = -1, \\
        \CC \rtimes (\CC^\times \times \CC^\times), &\text{if } \lambda \notin \{0,-1\}.
    \end{cases}$$
    Given $k,\ell \in \ZZ/2\ZZ$, $a,b,c,d \in \CC$, and $\alpha,\beta,\xi,\zeta \in \CC^\times$, the structure of the groups above is the following:
    \begin{align*}
        G_B(0) &\colon &(k,a,b,\alpha,\xi) \cdot (\ell,c,d,\ell,\beta,\zeta) &= (k + \ell, \varepsilon a + \xi c, b + \xi d, \alpha^{\varepsilon}\beta, \xi \zeta); \\
        G_A(0), G_B(-1) &\colon &(k,a,\alpha,\xi) \cdot (\ell,c,\beta,\zeta) &= (k + \ell, \varepsilon a + \xi c, \alpha^{\varepsilon}\beta, \xi \zeta); \\
        G_A(-1) &\colon &(k,b,\alpha,\xi) \cdot (\ell,d,\beta,\zeta) &= (k + \ell, b + \xi d, \alpha^{\varepsilon}\beta, \xi \zeta); \\
        G_X(\lambda) &\colon &(a,\alpha,\xi) \cdot (c,\beta,\zeta) &= (a + \xi c, \alpha \beta, \xi \zeta);
    \end{align*}
    where $\varepsilon = (-1)^\ell$ and $\lambda \notin \{0,-1\}$. Furthermore, $\Aut(\W_X(\lambda)) \cong \InnAut(\W_X(\lambda)) \rtimes G_X(\lambda)$.
\end{proposition}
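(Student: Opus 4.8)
The plan is to reduce everything to a short list of ``straightening'' relations among the five families of generators $\sigma_\alpha$, $\mu_\xi$, $\tau_\lambda$, $\varphi_a^{X(\lambda)}$, $\psi_b^{X(\lambda)}$ (equivalently $\varpi_a^{A(\lambda)}$), and then assemble the group law from these. First I would record that each generator is genuinely an automorphism: each is a bijective linear map of the shape \eqref{eq:sigma nice form}, and the verification at the end of the proof of Proposition \ref{prop:outer automorphisms} shows that any such map satisfying \eqref{eq:automorphism gamma} and \eqref{eq:automorphism xi} preserves the bracket. Since every element of $G_X(\lambda)$ is a composition of these generators, this already yields $G_X(\lambda) \subseteq \Aut(\W_X(\lambda))$.

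Next I would establish the commutation relations, all of which are short direct computations on the basis $\{L_n, X_n\}$. The diagonal automorphisms commute, $\sigma_\alpha \circ \mu_\xi = \mu_\xi \circ \sigma_\alpha$, and multiply as $\sigma_\alpha\sigma_\beta = \sigma_{\alpha\beta}$, $\mu_\xi\mu_\zeta = \mu_{\xi\zeta}$, so they generate a copy of $\CC^\times \times \CC^\times$. The unipotent generators are additive, $\varphi_a\varphi_c = \varphi_{a+c}$ and $\psi_b\psi_d = \psi_{b+d}$, they commute with each other and with $\sigma_\alpha$, while $\mu_\xi$ rescales them, $\mu_\xi\circ\varphi_c = \varphi_{\xi c}\circ\mu_\xi$ and $\mu_\xi\circ\psi_d = \psi_{\xi d}\circ\mu_\xi$. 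This already produces the $\CC\rtimes(\CC^\times\times\CC^\times)$ structure (respectively $\CC^2\rtimes(\CC^\times\times\CC^\times)$ for $\W_B(0)$) when $\lambda\notin\{0,-1\}$. The only delicate relations involve $\tau_\lambda$: one checks $\tau_\lambda^2 = \id$, the familiar Witt relation $\tau_\lambda\circ\sigma_\alpha = \sigma_{\alpha^{-1}}\circ\tau_\lambda$, the commutation $\tau_\lambda\circ\mu_\xi = \mu_\xi\circ\tau_\lambda$, and, crucially, $\tau_\lambda\circ\varphi_a = \varphi_{-a}\circ\tau_\lambda$ together with $\tau_\lambda\circ\psi_b = \psi_b\circ\tau_\lambda$. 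The sign flip on the $\varphi$-parameter, and its absence on the $\psi$-parameter, is exactly what produces the factor $\varepsilon=(-1)^\ell$ in front of $a$ but not in front of $b$ in the displayed multiplications (consistently: for $\W_A(0)$ one has $\varpi=\varphi$, so $\varepsilon a$ appears, whereas for $\W_A(-1)$ one has $\varpi=\psi$, so no $\varepsilon$ appears).

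With these relations in hand, the group law follows by straightening an arbitrary product $\Phi_{k;a,b;\alpha,\xi}^{X(\lambda)}\circ\Phi_{\ell;c,d;\beta,\zeta}^{X(\lambda)}$ back into canonical form: move $\tau_\lambda^\ell$ to the far left (picking up $\alpha^\varepsilon$ and $\varepsilon a$), push $\mu_\xi$ to the right past $\varphi_c,\psi_d$ (picking up $\xi c$ and $\xi d$), and collect the diagonal and unipotent parts using additivity. This gives precisely the four stated product formulas. Rather than arguing closure and inverses separately, I would phrase the conclusion as follows: the displayed multiplications define genuine semi-direct product groups, and the assignment of parameters to $\Phi_{k;a,b;\alpha,\xi}^{X(\lambda)}$ is a group homomorphism (this is exactly the straightening computation) that is injective, since reading off the action on $X_n$ forces $\varepsilon=1$ and $\alpha=\xi=1$, after which the action on $L_n$ forces $a=b=0$. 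Hence $G_X(\lambda)$ is a subgroup isomorphic to the stated abstract group, and the case-by-case isomorphism types are read off by noting which generators survive ($\tau_\lambda$ only for $\lambda\in\{0,-1\}$, and the second unipotent generator $\psi^{B}$ only for $\W_B(0)$).

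Finally, for $\Aut(\W_X(\lambda)) \cong \InnAut(\W_X(\lambda))\rtimes G_X(\lambda)$: Proposition \ref{prop:outer automorphisms} provides, for every $\sigma\in\Aut(\W_X(\lambda))$, an element $\rho\in\InnAut(\W_X(\lambda))$ with $\rho\circ\sigma\in G_X(\lambda)$, so $\sigma\in\InnAut(\W_X(\lambda))\cdot G_X(\lambda)$ and therefore $\Aut(\W_X(\lambda)) = \InnAut(\W_X(\lambda))\cdot G_X(\lambda)$. Since $\InnAut(\W_X(\lambda))$ is normal in $\Aut(\W_X(\lambda))$ and $\InnAut(\W_X(\lambda))\cap G_X(\lambda)=\{\id_{\W_X(\lambda)}\}$ (as noted after Notation \ref{ntt:outer automorphism group}), these combine to give the internal semi-direct product decomposition. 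Feeding in $\InnAut(\W_X(\lambda))\cong\CC^\infty$ or $\CC_0^\infty$ from Lemma \ref{lem:group structure of inner auomorphisms} then yields the global structure asserted in Theorem \ref{thm:automorphisms}. I expect the main obstacle to be the bookkeeping in the $\tau_\lambda$ computations, in particular the $n=0$ special cases appearing in $\tau_{-1}$ and keeping the $\varphi$/$\psi$ sign conventions consistent across the $A$ and $B$ families.
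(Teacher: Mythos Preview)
Your proposal is correct and follows essentially the same approach as the paper: both arguments establish the composition formula $\Phi_{k;a,b;\alpha,\xi}^{X(\lambda)}\circ\Phi_{\ell;c,d;\beta,\zeta}^{X(\lambda)} = \Phi_{k+\ell;\varepsilon a+\xi c,\,b+\xi d;\,\alpha^{\varepsilon}\beta,\,\xi\zeta}^{X(\lambda)}$ and then read off the group structure, with the final semi-direct product following from Proposition~\ref{prop:outer automorphisms} and the trivial intersection noted after Notation~\ref{ntt:outer automorphism group}. The only cosmetic difference is that the paper verifies this composition formula by evaluating $\Phi\circ\Phi$ directly on $L_n$ and $X_n$, whereas you factor the computation into the individual commutation relations among $\tau_\lambda,\varphi_a,\psi_b,\sigma_\alpha,\mu_\xi$ and then straighten; these are equivalent computations.
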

\begin{proof}
    Let $k,\ell \in \ZZ/2\ZZ$, $a,b,c,d \in \CC$, and $\alpha,\beta,\xi,\zeta \in \CC^\times$, where we assume that $k = \ell = 0$ if $\lambda \notin \{0,-1\}$. We claim that
    \begin{equation}\label{eq:automorphism composition}
        \Phi_{k;a,b;\alpha,\xi}^{X(\lambda)} \circ \Phi_{\ell;c,d;\beta,\zeta}^{X(\lambda)} = \Phi_{k + \ell; (-1)^\ell a + \xi c, b + \xi d; \alpha^{(-1)^\ell}\beta, \xi \zeta}^{X(\lambda)}.
    \end{equation}
    We will only prove the claim in the case $X = A$, since the case $X = B$ is similar.
    
    Let $\varepsilon_1 = (-1)^k, \varepsilon_2 = (-1)^\ell$, and $\varepsilon = (-1)^{k+\ell}$ (recall the assumption that $k = \ell = 0$ if $\lambda \notin \{0,-1\}$, so $\varepsilon_1 = \varepsilon_2 = \varepsilon = 1$ in these cases). We have
    \begin{align*}
        \Phi_{k;a,b;\alpha,\xi}^{A(\lambda)}(L_n) &= \alpha^n(\varepsilon_1 L_{\varepsilon_1 n} + (an^2 + bn)A_{\varepsilon_1 n}), \\
        \Phi_{k;a,b;\alpha,\xi}^{A(\lambda)}(A_n) &= \begin{cases}
            -\xi A_0, &\text{if } \lambda = -1, n = 0, \text{ and } k = 1, \\
            \alpha^n \xi A_{\varepsilon_1 n}, &\text{otherwise}.
        \end{cases}
    \end{align*}
    for all $n \in \ZZ$. Therefore,
    \begin{align*}
        \Phi_{k;a,b;\alpha,\xi}^{A(\lambda)}(\Phi_{\ell;c,d;\beta,\zeta}^{A(\lambda)}(L_n)) &= \beta^n\left(\varepsilon_2\Phi_{k;a,b;\alpha,\xi}^{A(\lambda)}(L_{\varepsilon_2 n}) + (cn^2 + dn)\Phi_{k;a,b;\alpha,\xi}^{A(\lambda)}(A_{\varepsilon_2 n})\right) \\
        &= (\alpha^{\varepsilon_2}\beta)^n\left(\varepsilon L_{\varepsilon n} + \varepsilon_2(an^2 + \varepsilon_2 b n)A_{\varepsilon n} + (cn^2 + dn)\xi A_{\varepsilon n}\right) \\
        &= (\alpha^{\varepsilon_2}\beta)^n\left(\varepsilon L_{\varepsilon n} + ((\varepsilon_2a + \xi c)n^2 + (b + \xi d)n)A_{\varepsilon n}\right) \\
        &= \Phi_{k+\ell; \varepsilon_2 a + \xi c, b + \xi d; \alpha^{\varepsilon_2} \beta, \xi \zeta}^{A(\lambda)}(L_n), \\
        \Phi_{k;a,b;\alpha,\xi}^{A(\lambda)}(\Phi_{\ell;c,d;\beta,\zeta}^{A(\lambda)}(A_n)) &= \begin{cases}
            -\zeta \Phi_{k;a,b;\alpha,\xi}^{A(\lambda)}(A_0), &\text{if } \lambda = -1, n = 0, \text{ and } \ell = 1, \\
            \beta^n \zeta \Phi_{k;a,b;\alpha,\xi}^{A(\lambda)}(A_{\varepsilon_2 n}), &\text{otherwise}.
        \end{cases} \\
        &= \begin{cases}
            -\xi \zeta A_0, &\text{if } \lambda = -1, n = 0, \text{ and } k + \ell = 1, \\
            (\alpha^{\varepsilon_2}\beta)^n \xi \zeta A_{\varepsilon n}, &\text{otherwise}.
        \end{cases} \\
        &= \Phi_{k+\ell; \varepsilon_2 a + \xi c, b + \xi d; \alpha^{\varepsilon_2} \beta, \xi \zeta}^{A(\lambda)}(A_n).
    \end{align*}
    This proves the the claim.
    
    The computation of the group structures now follows immediately from the claim, the definition of $G_X(\lambda)$ in the statement of Theorem \ref{thm:automorphisms}, and the definitions of $\Phi_{k;a,b;\alpha,\xi}^{X(\lambda)}$ and $\Psi_{k;a;\alpha,\xi}^{A(\lambda)}$ from Notation \ref{ntt:automorphisms} and Lemma \ref{lem:inner automorphisms}. The difference in the group structures of $G_A(-1)$ and $G_A(0)$ comes from the fact that, by definition, $G_A(-1)$ is generated by $\Psi_{k;a;\alpha,\xi}^{A(-1)} = \Phi_{k;0,a;\alpha,\xi}^{A(-1)}$, while $G_A(0)$ is generated by $\Psi_{k;a;\alpha,\xi}^{A(0)} = \Phi_{k;a,0;\alpha,\xi}^{A(0)}$, where $k \in \ZZ/2\ZZ$, $a \in \CC$, and $\alpha, \xi \in \CC^\times$.

    For the final sentence, note that $\Aut(\W_X(\lambda)) = \InnAut(\W_X(\lambda)) G_X(\lambda)$ by Proposition \ref{prop:outer automorphisms}. Since $\InnAut(\W_X(\lambda)) \cap G_X(\lambda) = \{\id_{\W_X(\lambda)}\}$ by Lemma \ref{lem:inner automorphisms}, it follows that this is a semi-direct product. This concludes the proof.
\end{proof}

We are now ready to prove Theorem \ref{thm:automorphisms}.

\begin{proof}[Proof of Theorem \ref{thm:automorphisms}]
    By Proposition \ref{prop:group structure of outer automorphisms}, we have
    $$\Aut(\W_X(\lambda)) = \InnAut(\W_X(\lambda)) \rtimes G_X(\lambda).$$
    Let $a,b,c,d,y_i,z_i \in \CC$ for $i \in \ZZ$, and $\alpha,\beta,\xi,\zeta \in \CC^\times$, where we assume that only finitely many $y_i$ and $z_i$ are nonzero. These parameters allow us to take the product of two elements of $\Aut(\W_X(\lambda))$: we will compute the product
    \begin{equation}\label{eq:composing general automorphisms}
        \left(\prod_{i \in \ZZ} \exp(y_i\ad_{X_i}) \Phi_{0;a,b;\alpha,\xi}^{X(\lambda)}\right) \left(\prod_{j \in \ZZ} \exp(z_j\ad_{X_j}) \Phi_{0;c,d;\beta,\zeta}^{X(\lambda)}\right).
    \end{equation}
    Recall that if $X = B$, we may assume that $y_0 = z_0 = 0$ since $B_0$ is central. Note that
    \begin{equation}\label{eq:moving Phi to the right}
        \Phi_{0;a,b;\alpha,\xi}^{X(\lambda)} \prod_{j \in \ZZ} \exp(z_j\ad_{X_j}) = \prod_{j \in \ZZ}\left(\Phi_{0;a,b;\alpha,\xi}^{X(\lambda)} \exp(z_j\ad_{X_j}) (\Phi_{0;a,b;\alpha,\xi}^{X(\lambda)})^{-1}\right) \Phi_{0;a,b;\alpha,\xi}^{X(\lambda)}.
    \end{equation}
    Letting $z \in \CC$ and $n \in \ZZ$, we have
    \begin{equation}
        \Phi_{0;a,b;\alpha,\xi}^{X(\lambda)} \exp(z\ad_{X_n}) (\Phi_{0;a,b;\alpha,\xi}^{X(\lambda)})^{-1} = \exp\left(z \ad_{\Phi_{0;a,b;\alpha,\xi}^{X(\lambda)}(X_n)}\right) \label{eq:conjugation of inner automorphism} = \exp(z \alpha^n \xi \ad_{X_n}).
    \end{equation}
    Therefore,
    \begin{equation}\label{eq:simplifying composition of automorphisms}
        \Phi_{0;a,b;\alpha,\xi}^{X(\lambda)} \prod_{j \in \ZZ} \exp(z_j\ad_{X_j}) = \prod_{j \in \ZZ} \exp(\alpha^j \xi z_j \ad_{X_j}) \Phi_{0;a,b;\alpha,\xi}^{X(\lambda)},
    \end{equation}
    by \eqref{eq:moving Phi to the right} and \eqref{eq:conjugation of inner automorphism}. Using \eqref{eq:simplifying composition of automorphisms}, the product \eqref{eq:composing general automorphisms} becomes
    $$\prod_{i \in \ZZ} \exp(y_i\ad_{X_i}) \prod_{j \in \ZZ} \exp(z_j \alpha^j \xi \ad_{X_j}) \Phi_{0;a,b;\alpha,\xi}^{X(\lambda)} \Phi_{0.;c,d;\beta,\zeta}^{X(\lambda)} = \prod_{i \in \ZZ} \exp((y_i + \alpha^i \xi z_i)\ad_{X_i})\Phi_{0;a + \xi c, b + \xi d; \alpha \beta, \xi \zeta}^{X(\lambda)},$$
    where we used \eqref{eq:automorphism composition} for the final equality. In summary, we have
    $$\left(\prod_{i \in \ZZ} \exp(y_i\ad_{X_i}) \Phi_{0;a,b;\alpha,\xi}^{X(\lambda)}\right) \left(\prod_{j \in \ZZ} \exp(z_j\ad_{X_j}) \Phi_{0;c,d;\beta,\zeta}^{X(\lambda)}\right) = \prod_{i \in \ZZ} \exp\left((y_i + \alpha^i \xi z_i)\ad_{X_i}\right)\Phi_{0;a + \xi c, b + \xi d; \alpha \beta, \xi \zeta}^{X(\lambda)}.$$
    This proves the result in the case $\lambda \notin \{0,-1\}$.

    To complete the proof, assume that $\lambda \in \{0,-1\}$. For these values of $\lambda$, we need to consider how $\tau_\lambda$ interacts with the other automorphisms. We proceed similarly to above to deduce that
    \begin{align*}
        \tau_\lambda \left(\prod_{i \in \ZZ} \exp(y_i \ad_{X_i})\right) &= \prod_{i \in \ZZ} \left(\tau_\lambda \exp(y_i \ad_{X_i}) \tau_\lambda \right) \tau_\lambda \\
        &= \left(\prod_{i \in \ZZ} \exp(y_i \ad_{\tau_\lambda(X_i)})\right) \tau_\lambda \\
        &= \begin{cases}
            \prod_{i \in \ZZ} \exp(y_{-i} \ad_{X_i}) \tau_0, &\text{if } \lambda = 0, \\
            \prod_{i \in \ZZ} \exp((-1)^{\delta_i^0}y_{-i} \ad_{X_i}) \tau_{-1}, &\text{if } \lambda = -1,
        \end{cases}
    \end{align*}
    Therefore, if we multiply two arbitrary elements of $\Aut(\W_X(\lambda))$, we get
    \begin{multline*}
        \left(\prod_{i \in \ZZ} \exp(y_i \ad_{X_i}) \Phi_{k;a,b;\alpha,\xi}^{X(\lambda)}\right)\left(\prod_{i \in \ZZ} \exp(z_i \ad_{X_i}) \Phi_{\ell;c,d;\beta,\zeta}^{X(\lambda)}\right) \\
        = \begin{cases}
            \prod_{i \in \ZZ} \exp((y_i + \alpha^{\varepsilon_1 i} \xi z_{\varepsilon_1 i}) \ad_{X_i}) \Phi_{k + \ell;\varepsilon_2 a + \xi c, b + \xi d; \alpha^{\varepsilon_2} \beta, \xi \zeta}^{X(\lambda)}, &\text{if } \lambda = 0, \\
            \prod_{i \in \ZZ} \exp((y_i + \varepsilon_1^{\delta_i^0} \alpha^{\varepsilon_1 i} \xi z_{\varepsilon_1 i}) \ad_{X_i}) \Phi_{k + \ell;\varepsilon_2 a + \xi c, b + \xi d; \alpha^{\varepsilon_2} \beta, \xi \zeta}^{X(\lambda)}, &\text{if } \lambda = -1,
        \end{cases}
    \end{multline*}
    where $k,\ell \in \ZZ/2\ZZ$ and $\varepsilon_1 = (-1)^k$, $\varepsilon_2 = (-1)^\ell$. This concludes the proof.
\end{proof}

\section{Derivations}\label{sec:derivations}

Next, we compute derivations of $\W_X(\lambda)$. As remarked in Section \ref{sec:preliminaries}, computing derivations of $\g$ is equivalent to computing $\HH^1(\g;\g)$. There is a close relationship between the automorphisms of a Lie algebra $\g$ and its derivations: if we have a (smooth) one-parameter subgroup of $\Aut(\g)$, differentiating it at $0$ gives a derivation of $\g$. In other words, derivations of $\g$ can be seen as ``infinitesimal automorphisms''. We now introduce notation for the derivations that arise from one-parameter subgroups of $\Aut(\W_X(\lambda))$.

\begin{Notation}\label{ntt:derivations}
    Let $X \in \{A,B\}$ and $\lambda \in \PP^1$. Recall the automorphisms defined in Notation \ref{ntt:automorphisms}. For both $X = A$ and $X = B$, define
    $$d_{\Ab} = \diffeval{t}{0}\mu_{e^t} \in \Der(\W_X(\lambda)).$$
    For $X = A$, define
    $$\delta_\lambda^A = \diffeval{t}{0}\varphi_t^{A(\lambda)}, \quad \del_\lambda^A = \diffeval{t}{0}\psi_t^{A(\lambda)} \in \Der(\W_A(\lambda)).$$
    For $X = B$, define
    $$d_\lambda^B = \diffeval{t}{0}\varphi_t^{B(\lambda)} \in \Der(\W_B(\lambda)).$$
    For $X = B$ and $\lambda = 0$, define
    $$\del^B_0 = \diffeval{t}{0}\psi_t^{B(0)} \in \Der(\W_B(0)).$$
\end{Notation}

\begin{remark}
    Certainly, there is another one-parameter subgroup $\sigma_{e^t}$ of $\Aut(\W_X(\lambda))$, giving rise to another derivation. However, the derivation we obtain from this one-parameter subgroup is inner: we have
    $$\diffeval{t}{0}\sigma_{e^t} = \ad_{L_0} \in \Inn(\W_X(\lambda)).$$
    Indeed, we can immediately see this from the observations of Remark \ref{rem:exp(L0)}.
\end{remark}

Having defined these derivations that arise from automorphisms, we now compute their actions on $\W_X(\lambda)$.

\begin{lemma}\label{lem:derivations}
    Let $X \in \{A,B\}$ and $\lambda \in \PP^1$. Then
    $$\begin{gathered}
        d_{\Ab}(L_n) = 0, \quad d_{\Ab}(X_n) = X_n; \\
        \delta_\lambda^A(L_n) = nA_n, \quad \delta_\lambda^A(A_n) = 0; \\
        \del_\lambda^A(L_n) = n(n + 1)A_n, \quad \del_\lambda^A(A_n) = 0; \\
        d_{\lambda}^B(L_n) = \begin{cases}
        (\lambda + 1)B_0, &\text{if } n = 0 \text{ and } \lambda \neq \infty,\\
        B_n, &\text{otherwise},
    \end{cases} \quad d_{\lambda}^B(B_n) = 0; \\
    \del^B_0(L_n) = nB_n, \quad \del^B_0(B_n) = 0,
    \end{gathered}$$
    for all $n \in \ZZ$.
\end{lemma}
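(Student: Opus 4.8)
The plan is to read each derivation straight off its definition in Notation~\ref{ntt:derivations}, where it is given as the derivative at $t = 0$ of an explicit one-parameter family of automorphisms recorded in Notation~\ref{ntt:automorphisms}. The conceptual point that justifies calling these maps derivations is the following: if $\{\phi_t\}_{t \in \CC}$ is a family of automorphisms of $\W_X(\lambda)$ with $\phi_0 = \id_{\W_X(\lambda)}$, then differentiating the relation $\phi_t([x,y]) = [\phi_t(x),\phi_t(y)]$ at $t = 0$ and applying the Leibniz rule shows that $d \coloneqq \diffeval{t}{0}\phi_t$ satisfies $d([x,y]) = [d(x),y] + [x,d(y)]$, i.e.\ $d \in \Der(\W_X(\lambda))$. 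Thus the entire lemma reduces to differentiating closed-form expressions in $t$ and evaluating them on the generators $L_n$ and $X_n$.

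First I would dispatch $d_{\Ab}$: since $\mu_{e^t}(L_n) = L_n$ and $\mu_{e^t}(X_n) = e^t X_n$, differentiating at $t = 0$ gives $d_{\Ab}(L_n) = 0$ and $d_{\Ab}(X_n) = X_n$. This is the only place where the exponential (rather than affine-linear) dependence on $t$ plays any role. For the remaining $A$-families and for $\psi_t^{B(0)}$ the parameter enters affinely: each of $\varphi_t^{A(\lambda)}$, $\psi_t^{A(\lambda)}$, and $\psi_t^{B(0)}$ fixes the relevant module generator and acts on $L_n$ by adding $t$ times a fixed multiple of $X_n$, so the derivative at $t = 0$ is simply the coefficient of $t$, while the module generator is sent to $0$. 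Reading these coefficients off the formulas in Notation~\ref{ntt:automorphisms} yields the stated values of $\delta_\lambda^A$, $\del_\lambda^A$, and $\del_0^B$.

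The only computation needing a moment of care is $d_\lambda^B$, because $\varphi_t^{B(\lambda)}$ is defined piecewise: for $\lambda \neq \infty$ it sends $L_0 \mapsto L_0 + (\lambda + 1)t B_0$ but $L_n \mapsto L_n + t B_n$ for $n \neq 0$, whereas for $\lambda = \infty$ it sends $L_n \mapsto L_n + t B_n$ for all $n$; in every branch it fixes each $B_n$. Differentiating each branch at $t = 0$ reproduces precisely the piecewise formula for $d_\lambda^B$ in the statement, with the degree-zero value $(\lambda + 1)B_0$ occurring exactly when $\lambda$ is finite.

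There is no substantive obstacle here: every family in Notation~\ref{ntt:automorphisms} depends on its parameter either affinely or through a single exponential factor, so all derivatives are immediate and the lemma follows by inspection. The only thing to keep track of is the bookkeeping of the degree-zero special cases in the $B(\lambda)$ automorphisms, together with the convention that $n(n + 1)\lambda$ is interpreted as $n^2$ when $\lambda = \infty$, so that the case distinctions appearing in the statement line up with those in the definitions.
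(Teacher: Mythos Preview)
Your approach is exactly the paper's: the authors' proof is the single line ``Follows immediately from the definitions of the automorphisms in Notation~\ref{ntt:automorphisms},'' and you have simply spelled out that differentiation in detail. There is nothing to add.
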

\begin{proof}
    Follows immediately from the definitions of the automorphisms in Notation \ref{ntt:automorphisms}.
\end{proof}

\begin{remark}\label{rem:cocycle to derivation dictionary}
    Using the identifications $A(\lambda) = \CC[t,t^{-1}]dt$ and $B(\lambda) = \CC[t,t^{-1}]$, we now give formulas for the derivations of Notation \ref{ntt:derivations} in terms of polynomials. We have
    \begin{align*}
        \delta_\lambda^A(f\del) &= d(t^{-1}f) = (t^{-1}f' - t^{-2}f) dt, \\
        \del_\lambda^A(f\del) &= d(f') = f'' \ dt, \\
        d_\lambda^B(f\del) &= \begin{cases}
            t^{-1}f + \lambda \ang{t^{-1},f}, &\text{if } \lambda \neq \infty, \\
            t^{-1}f, &\text{if } \lambda = \infty,
        \end{cases} \\
        \del_0^B(f\del) &= f' - t^{-1}f.
    \end{align*}
    By the observations of Remark \ref{rem:basis-free cocycles}, it is now clear that, under the isomorphism $\HH^2_{\Mix}(\W_X(\lambda)) \cong \HH^1(\W;X(\lambda)')$, we have the following correspondences:
    \begin{align*}
        \delta_\lambda^A &\leftrightarrow \Omega^B_{\Mix} \quad (\lambda \neq 0), \\
        \del_0^A &\leftrightarrow \Omega^B_{\Mix} \quad (\lambda = 0), \\
        d_\lambda^B &\leftrightarrow \Omega^A_{\Mix}, \\
        \del_0^B &\leftrightarrow \Omega^A_0,
    \end{align*}
    where we have used that $A(\lambda)' \cong B(\lambda)$ and $B(\lambda)' \cong A(\lambda)$ under the form $\mathcal{B}$.
\end{remark}

We now consider the question of when the derivations defined in Notation \ref{ntt:derivations} are inner.

\begin{lemma}\label{lem:inner derivations}
    Let $\lambda \in \PP^1$. If $\lambda \neq \infty$, then $(\lambda + 1)\delta_\lambda^A + \lambda \del_\lambda^A \in \Inn(\W_A(\lambda))$, while if $\lambda = \infty$, then $\delta_\infty^A + \del_\infty^A = \ad_{-A_0} \in \Inn(\W_A(\infty))$. Defining
    $$d_\lambda^A = \begin{cases}
        \delta_\lambda^A, &\text{if } \lambda \neq 0, \\
        \del_0^A, &\text{if } \lambda = 0,
    \end{cases}$$
    then $d_\lambda^A \notin \Inn(\W_A(\lambda))$ for all $\lambda \in \PP^1$. Certainly, we have
    $$d_\lambda^A = \diffeval{t}{0}\varpi_t^{A(\lambda)},$$
    where $\varpi_t^{A(\lambda)}$ is defined in \eqref{eq:special automorphism A}.

    Furthermore, $d_\lambda^B \notin \Inn(\W_B(\lambda))$, and there is no nonzero linear combination of $d_0^B$ and $\del^B_0$ which produces an inner derivation of $\W_B(0)$.
\end{lemma}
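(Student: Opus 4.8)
The plan is to reduce every assertion to a computation with degree-zero elements, using the internal $\ZZ$-grading of $\W_X(\lambda)$. Each derivation in the statement is homogeneous of degree $0$ by Lemma~\ref{lem:derivations}, so if such a $d$ equals $\ad_x$ with $x = \sum_j x_j$ its homogeneous decomposition, then comparing degree-zero parts gives $d = \ad_{x_0}$, where $x_0 \in \CC L_0 \oplus \CC X_0$. Hence it suffices to know the degree-zero inner derivations $\ad_{\alpha L_0 + \beta X_0}$. The brackets give $\ad_{L_0}(L_n) = nL_n$ and $\ad_{L_0}(X_n) = nX_n$ for both $X$, while $\ad_{A_0}(L_n) = -(n + n(n+1)\lambda)A_n$ and $\ad_{A_0}(A_n) = 0$; crucially $B_0$ is central, so $\ad_{B_0} = 0$. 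This asymmetry, which is a manifestation of $A(\lambda)$ being the adjoint of $B(\lambda)$, is what separates the two cases.

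For $X = A$, I would first confirm the inner-ness claim by evaluating on generators. By Lemma~\ref{lem:derivations}, for $\lambda \neq \infty$ we have $\bigl((\lambda+1)\delta_\lambda^A + \lambda\del_\lambda^A\bigr)(L_n) = \bigl((\lambda+1)n + \lambda n^2\bigr)A_n = (n + n(n+1)\lambda)A_n = \ad_{-A_0}(L_n)$, while for $\lambda = \infty$ the combination $\delta_\infty^A + \del_\infty^A$ sends $L_n \mapsto (n + n^2)A_n = \ad_{-A_0}(L_n)$; in both cases the maps also agree (trivially) on $A(\lambda)$, so the combination equals $\ad_{-A_0}$. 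To show $d_\lambda^A \notin \Inn(\W_A(\lambda))$, suppose $d_\lambda^A = \ad_{\alpha L_0 + \beta A_0}$. Evaluating on $A_n$ forces $\alpha n = 0$, so $\alpha = 0$, and then evaluating on $L_n$ gives $-\beta(n + n(n+1)\lambda) = n$ when $\lambda \neq 0$, or $-\beta n = n^2$ when $\lambda = 0$. Comparing two distinct nonzero values of $n$ yields a contradiction in every case (for instance, $n = 1$ and $n = 2$ force $\beta\lambda = 0$ and then $0 = 1$ when $\lambda \neq 0$). Finally, $d_\lambda^A = \diffeval{t}{0}\varpi_t^{A(\lambda)}$ is immediate from the definition~\eqref{eq:special automorphism A} of $\varpi_a^{A(\lambda)}$ together with Notation~\ref{ntt:derivations}.

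For $X = B$, the centrality of $B_0$ means that the only degree-zero inner derivations are the \emph{diagonal} maps $\ad_{\alpha L_0}\colon L_n \mapsto \alpha n L_n,\ B_n \mapsto \alpha n B_n$. Since $d_\lambda^B(L_n)$ lies in $B(\lambda)$ (it equals $B_n$, or $(\lambda+1)B_0$ when $n = 0$) while $\ad_{\alpha L_0}(L_n) = \alpha n L_n$ lies in $\W$, matching the two would force both $\alpha n = 0$ and the $B_n$-coefficient $1 = 0$, a contradiction; hence $d_\lambda^B \notin \Inn(\W_B(\lambda))$. The final claim is handled the same way: a combination $c_1 d_0^B + c_2 \del_0^B$ sends $L_n \mapsto (c_1 + c_2 n)B_n$ and annihilates $B(\lambda)$, so equating it to $\ad_{\alpha L_0}$ forces $\alpha = 0$ and $c_1 + c_2 n = 0$ for all $n$, whence $c_1 = c_2 = 0$.

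No individual computation is hard, so the only genuinely structural point is the degree-reduction step, which must be stated carefully enough that the contrast between the $A$-case (where $\ad_{A_0} \neq 0$ supplies a second degree-zero inner derivation) and the $B$-case (where $\ad_{B_0} = 0$) is transparent. The remaining effort is purely organisational: running the case analysis over $\lambda \neq 0$, $\lambda = 0$, and $\lambda = \infty$ without repeating essentially the same linear-algebra contradiction.
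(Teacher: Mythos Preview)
Your proof is correct and follows essentially the same approach as the paper: both reduce to the observation that the degree-zero inner derivations of $\W_X(\lambda)$ are spanned by $\ad_{L_0}$ and $\ad_{X_0}$, with the key contrast that $\ad_{B_0} = 0$ since $B_0$ is central while $\ad_{A_0} \neq 0$. The paper's argument is terser (it simply asserts that $d_\lambda^A$ lies outside $\spn\{\ad_{L_0},\ad_{A_0}\}$ without writing out the contradiction), whereas you make the linear-algebra verification explicit, but the substance is identical.
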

\begin{proof}
    By Lemma \ref{lem:derivations}, we can easily see that $(\lambda + 1)\delta_\lambda^A + \lambda \del_\lambda^A = \ad_{-A_0}$ if $\lambda \neq \infty$, while $\delta_\infty^A + \del_\infty^A = \ad_{-A_0}$ if $\lambda = \infty$. Since $\ad_{A_0}$ and $\ad_{L_0}$ span the space of inner derivations of $\W_A(\lambda)$ of degree zero, it follows immediately that $d_\lambda^A$ is not inner for all $\lambda \in \PP^1$.

    The only nonzero (up to multiplication by a scalar) inner derivation of $\W_B(\lambda)$ of degree zero is $\ad_{L_0}$. This is because $B_0$ is central in $\W_B(\lambda)$, so $\ad_{B_0} = 0$. The result follows.
\end{proof}

The main goal of this section is to prove that, together with inner derivations, the derivations defined in Notation \ref{ntt:derivations} span the entire space of derivations of $\W_X(\lambda)$.

\begin{theorem}\label{thm:derivations}
    Let $\lambda \in \PP^1$. Then
    \begin{align*}
        \Der(\W_A(\lambda)) &= \Inn(\W_A(\lambda)) \oplus \CC d_{\Ab} \oplus \CC d_\lambda^A; \\
        \Der(\W_B(\lambda)) &= \begin{cases}
            \Inn(\W_B(\lambda)) \oplus \CC d_{\Ab} \oplus \CC d_0^B \oplus \CC \del_0^B, &\text{if } \lambda = 0, \\
            \Inn(\W_B(\lambda)) \oplus \CC d_{\Ab} \oplus \CC d_\lambda^B, &\text{if } \lambda \neq 0,
        \end{cases}
    \end{align*}
    where the derivations are defined in Notation \ref{ntt:derivations} and Lemma \ref{lem:inner derivations}. Consequently,
    $$\dim(\HH^1(\W_A(\lambda);\W_A(\lambda))) = 2, \quad \dim(\HH^1(\W_B(\lambda);\W_B(\lambda))) = \begin{cases}
        3, &\text{if } \lambda = 0, \\
        2, &\text{if } \lambda \neq 0.
    \end{cases}$$
\end{theorem}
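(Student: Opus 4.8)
The plan is to compute $\Der(\W_X(\lambda))$ by exploiting the internal grading, exactly as in the computation of central extensions. Since $\W_X(\lambda)$ is internally $\ZZ$-graded (Definition \ref{def:internally graded}) and is an internally graded module over itself, Theorem \ref{thm:internal grading} with coefficients in the adjoint module gives $\HH^1(\W_X(\lambda);\W_X(\lambda)) \cong \HH_0^1(\W_X(\lambda);\W_X(\lambda))$. By Remark \ref{rem:H1} this reduces the problem to classifying \emph{degree-zero} derivations modulo degree-zero inner ones: every derivation differs from a degree-zero one by a coboundary, i.e.\ by an inner derivation. So I would write a general degree-zero derivation as
$$d(L_n) = p_n L_n + q_n X_n, \qquad d(X_n) = r_n L_n + s_n X_n,$$
with $p_n,q_n,r_n,s_n \in \CC$, and extract the scalar constraints by applying the derivation property to the brackets $[L_n,L_m]$, $[L_n,X_m]$ and $[X_n,X_m]$.

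These relations decouple into four families: an additive relation $p_{n+m}=p_n+p_m$ (for $n\neq m$) together with a condition on $q$ from $[L_n,L_m]$; conditions on $r$ and on $s$ from $[L_n,X_m]$; and a further condition on $r$ from $[X_n,X_m]=0$. I would first solve the two $r$-equations, which involve $r$ alone: setting $m=0$ (using $\omega_{B(\lambda)}(n,0)=0$ when $X=B$, and $\omega_{A(\lambda)}(n,0)=n(1+(n+1)\lambda)$ when $X=A$) forces $r_0=0$ and then $r_n=0$ for all $n$, with the finitely many exceptional $\lambda$ (where $1+(n+1)\lambda=0$) handled by choosing auxiliary indices. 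Once $r\equiv 0$, the derivation $d$ preserves the ideal $X(\lambda)$ and hence descends to a degree-zero derivation of $\W\cong\W_X(\lambda)/X(\lambda)$; since all derivations of the Witt algebra are inner, the induced map is a multiple of $\ad_{L_0}$, giving $p_n=cn$. Subtracting the inner derivation $c\,\ad_{L_0}$ sets $p\equiv 0$, after which the $s$-equation collapses to $\omega_{X(\lambda)}(n,m)(s_{n+m}-s_m)=0$, forcing $s_n$ to be constant; subtracting a multiple of $d_{\Ab}$ then sets $s\equiv 0$.

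What remains is a derivation of the shape $L_n\mapsto q_n X_n$, $X_n\mapsto 0$, governed by a single $q$-equation, and here the adjoint duality between $A(\lambda)$ and $B(\lambda)$ pays off. For $X=A$ one checks $q_0=0$, whereupon the $q$-equation becomes exactly \eqref{eq:simple mixing cocycle B}, already solved in Proposition \ref{prop:W_B mixing cocycles}, yielding $q_n=an^2+bn$; for $X=B$ the $q$-equation matches \eqref{eq:mixing cocycle A}, solved in Proposition \ref{prop:W_A mixing cocycles}, yielding the space $\ZMix_A(\lambda)$, which is two-dimensional for $\lambda=0$ and one-dimensional otherwise. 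I would then identify the inner $q$-solutions. For $X=A$, $\ad_{A_0}$ is the degree-zero inner derivation with $q_n=-(n+n(n+1)\lambda)$, in agreement with Lemma \ref{lem:inner derivations}, so the two-dimensional $q$-space drops to the single outer class $d_\lambda^A$; for $X=B$ the element $B_0$ is central, so $\ad_{B_0}=0$ and every nonzero $q$-solution is outer, producing $d_\lambda^B$ (and additionally $\del_0^B$ when $\lambda=0$). Assembling the $d_{\Ab}$ contribution with the $q$-contribution, and verifying that their classes are independent in $\HH^1$ and meet $\Inn$ only in $0$, gives the stated direct-sum decompositions and the dimension counts $2$ and $3$.

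I expect the main obstacle to be the bookkeeping required to reuse the earlier solutions: matching the $q$-equations to \eqref{eq:simple mixing cocycle B} and \eqref{eq:mixing cocycle A} demands careful tracking of the Kronecker-delta terms in $\omega_{X(\lambda)}$ and a precise determination of $q_0$, and the contrast between $\ad_{A_0}$ and $\ad_{B_0}=0$ is exactly what creates the asymmetry between the $A$ and $B$ answers together with the special status of $\lambda=0$. The exceptional values $\lambda\in\{0,-1,\infty\}$ must be verified by hand throughout, both in the vanishing of $r$ and in the final counting.
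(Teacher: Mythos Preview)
Your proposal is correct and follows essentially the same approach as the paper's proof: reduce to degree-zero derivations via Theorem \ref{thm:internal grading}, write out the four scalar families, kill the $r$-coefficients (the paper's Lemma \ref{lem:derivation X to L}), normalise away the $p$- and $s$-parts by inner derivations and $d_{\Ab}$, and then recognise the remaining $q$-equation as \eqref{eq:simple mixing cocycle B} or \eqref{eq:mixing cocycle A} via the $A$/$B$ duality. The only cosmetic differences are that the paper solves the additive relation $p_{n+m}=p_n+p_m$ by a short direct induction rather than invoking $\Der(\W)=\Inn(\W)$, uses the substitution $n=-m$ in the fourth equation of Lemma \ref{lem:derivation defining equations} to get $r_m=0$ in one stroke (avoiding any exceptional-$\lambda$ bookkeeping), and dispatches the case $X=B$, $\lambda=0$ by citing \cite{GaoJiangPei} rather than treating it uniformly.
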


\begin{remark}
    Theorem \ref{thm:derivations} says that all derivations of $\W(0,1)$ extend to the one-parameter family $\W_A(\lambda)$. On the other hand, all derivations in $\Inn(\W(0,0)) \oplus \CC d_{\Ab} \oplus \CC d_0^B \subseteq \Der(\W(0,0))$ extend to the one-parameter family $\W_B(\lambda)$, while $\del_0^B$ does not.
\end{remark}

The next result greatly simplifies the computation of derivations of $\W_X(\lambda)$.

\begin{proposition}\label{prop:derivation splitting}
    Let $\g$ be a Lie algebra with $H^1(\g;\g) = 0$, and let $\M$  be a $\g$-module such that $\Hom_\g(\M,\g) = 0$. Then
    $$H^1(\g \ltimes \M; \g \ltimes \M) \cong H^1(\g;\M) \oplus \End_\g(\M),$$
    where we endow $\M$ with an abelian Lie algebra structure.
\end{proposition}
\begin{proof}
    Define
    $$\alpha \colon \Der(\g,\M) \hookrightarrow \Der(\g \ltimes \M), \quad \beta \colon \End_\g(\M) \hookrightarrow \Der(\g \ltimes \M)$$
    by
    $$\alpha(D)(x) = D(x), \quad \alpha(D)(v) = 0,$$
    $$\beta(\varphi)(x) = 0, \quad \beta(\varphi)(v) = \varphi(v),$$
    where $D \in \Der(\g,\M)$, $\varphi \in \End_\g(\M)$, $x \in \g$, and $v \in \M$. It is straightforward to check that $\alpha(D)$ and $\beta(\varphi)$ are elements of $\Der(\g \ltimes \M)$.

    Let $D \in \Der(\g \ltimes \M)$. Let $\overline{D}$ be the composition
    $$\g \xrightarrow{D} \g \ltimes \M \twoheadrightarrow (\g \ltimes \M)/\M \cong \g.$$
    It is easy to see that $\overline{D} \in \Der(\g)$. But $H^1(\g;\g) = 0$, so $\Der(\g) = \Inn(g)$. Therefore, $\overline{D} \in \Inn(\g)$, so there exists $w \in \g$ such that $\overline{D} = \ad_w$. Define
    $$D' \coloneqq D - \ad_w \in \Der(\g \ltimes \M).$$
    By construction, $D'(x) \in \M$ for all $x \in \g$. In other words, $\restr{D'}{\g} \in \Der(\g,\M)$.

    We claim that $\restr{D'}{\M}$ is a homomorphism of $\g$-modules. Indeed, for $x \in \g$ and $v \in \M$, we have
    $$D'(x \cdot v) = D'([x,v]) = [x,D'(v)] + [D'(x),v] = [x,D'(v)] = x \cdot D'(v),$$
    where we used that $D'(x) \in \M$ to deduce that $[D'(x),v] = 0$. Now let $\widehat{D}'$ be the composition
    $$\M \xrightarrow{D'} \g \ltimes \M \twoheadrightarrow \g.$$
    Since $\widehat{D}'$ is the composition of two $\g$-module homomorphisms, $\widehat{D}'$ is also a $\g$-module homomorphism. But since $\Hom_\g(\M,\g) = 0$ by assumption, we have $\widehat{D}' = 0$. Therefore, $D'(v) \in \M$ for all $v \in \M$, and thus $\restr{D'}{\M} \in \End_\g(\M)$.

    Define $D_1 \coloneqq \alpha(\restr{D'}{\g})$ and $D_2 \coloneqq \beta(\restr{D'}{\M})$. Certainly, $D = \ad_w + D_1 + D_2$. It follows that
    \begin{equation}\label{eq:derivation splitting}
        \Der(\g \ltimes \M) = \Big(\Inn(\g \ltimes \M) + \Der(\g,\M)\Big) \oplus \End_\g(\M),
    \end{equation}
    where we view $\Der(\g,\M)$ and $\End_\g(\M)$ as subsets of $\Der(\g \ltimes \M)$. Note that $\Der(\g,\M) \cap \Inn(\g \ltimes \M) = \Inn(\g,\M)$, and thus
    $$\frac{\Inn(\g \ltimes \M) + \Der(\g,\M)}{\Inn(\g \ltimes \M)} \cong \frac{\Der(\g,\M)}{\Der(\g,\M) \cap \Inn(\g \ltimes \M)} = \frac{\Der(\g,\M)}{\Inn(\g,\M)} \cong H^1(\g; \M).$$
    Therefore, taking the quotient by $\Inn(\g \ltimes \M)$ on both sides of \eqref{eq:derivation splitting}, we get
    $$H^1(\g \ltimes \M; \g \ltimes \M) \cong H^1(\g; \M) \oplus \End_\g(\M),$$
    which concludes the proof.
\end{proof}

We now show that Proposition \ref{prop:derivation splitting} applies to $\W_X(\lambda)$.

\begin{lemma}\label{lem:no homs X to W}
    Let $X \in \{A,B\}$ and $\lambda \in \PP^1$. Then $\Hom_\W(X(\lambda),\W) = 0$.
\end{lemma}
\begin{proof}
    The internally graded structures of $\W$ and $X(\lambda)$ immediately imply that any element of the space $\Hom_\W(X(\lambda),\W)$ must be graded of degree 0. Therefore, if $\varphi \in \Hom_\W(X(\lambda),\W)$, then
    $$\varphi(X_n) = \eta_n L_n.$$
    We now split the proof into the cases $X = A$ and $X = B$.

    \case{1}{$X = A$.}

    The equality $\varphi([L_n,A_m]) = [L_n,\varphi(A_m)]$ gives
    \begin{equation}\label{eq:eta coefficients A}
        (m - n)\eta_m = (n + m + n(n + 1) \lambda \delta_m^0)\eta_{n+m}
    \end{equation}
    for all $n,m \in \ZZ$. Letting $m \in \ZZ \nonzero$ and $n = -m$ in \eqref{eq:eta coefficients A}, we get $\eta_m = 0$ for all $m \in \ZZ \nonzero$. If we set $m = 0, n \in \ZZ \nonzero$ in \eqref{eq:eta coefficients A}, we also deduce that $\eta_0 = 0$. This proves the case $X = A$.

    \case{2}{$X = B$.}

    The equality $\varphi([L_n,B_m]) = [L_n,\varphi(B_m)]$ gives
    \begin{equation}\label{eq:eta coefficients B}
        (m - n)\eta_m = (m - n(n + 1)\lambda \delta_{n+m}^0)\eta_{n+m}
    \end{equation}
    for all $n,m \in \ZZ$. Setting $m = 0$ and $n \in \ZZ \nonzero$ \eqref{eq:eta coefficients B}, we get $\eta_0 = 0$. If we let $m \in \ZZ \nonzero$ and $n = -m$ in \eqref{eq:eta coefficients B}, we also deduce that $\eta_m = 0$ for $m \neq 0$. This concludes the proof.
\end{proof}

Proposition \ref{prop:derivation splitting}, together with Lemma \ref{lem:no homs X to W}, and the well-known fact that $\HH^1(\W;\W) = 0$ (see \cite[Proposition 7.2]{Buzaglo} or \cite{IkedaKawamoto}), imply the following.

\begin{corollary}\label{cor:derivations of WX split}
    Let $X \in \{A,B\}$ and $\lambda \in \PP^1$. Then
    $$\Der(\W_X(\lambda)) = \Inn(\W_X(\lambda)) + \Der(\W,X(\lambda)) + \End_\W(X(\lambda)),$$
    and thus
    $$\HH^1(\W_X(\lambda)) = \HH^1(\W;X(\lambda)) \oplus \End_\W(X(\lambda)).\qed$$
\end{corollary}

Therefore, to prove Theorem \ref{thm:derivations} it suffices to compute $\Der(\W,X(\lambda))$ and $\End_\W(X(\lambda))$. For $\Der(\W,X(\lambda))$, we have the isomorphism
$$\HH^1(\W;X(\lambda)') \cong \HH_{\Mix}^2(\W_X(\lambda)),$$
so we do not need to perform any further computations, since $\HH_{\Mix}^2(\W_X(\lambda))$ was already computed in Proposition \ref{prop:W_X mixing cocycles}.

\begin{proposition}\label{prop:derivations W to X}
    Let $\lambda \in \PP^1$. Then
    \begin{align*}
        \Der(\W,A(\lambda)) &= \Inn(\W,A(\lambda)) \oplus d_\lambda^A, \\
        \Der(\W,B(\lambda)) &= \begin{cases}
            \Inn(\W,B(\lambda)) \oplus \CC d_0^B \oplus \CC \del_0^B, &\text{if } \lambda = 0, \\
            \Inn(\W,B(\lambda)) \oplus \CC d_0^B, &\text{if } \lambda \neq 0,
        \end{cases}
    \end{align*}
    where the derivations are defined in Notation \ref{ntt:derivations} and Lemma \ref{lem:inner derivations}.
\end{proposition}
\begin{proof}
    By Propositions \ref{prop:W_X mixing cocycles} and \ref{prop:W_B mixing cocycles}, we know that $\HH^2_{\Mix}(\W_B(\lambda))$ is spanned by $\overline{\Omega}_{\Mix}^B$, where $\overline{\Omega}_{\Mix}^B$ is defined in Theorem \ref{thm:main}. By the isomorphism
    \begin{equation}\label{eq:mixing B to derivations A}
        \HH^2_{\Mix}(\W_B(\lambda)) \cong \HH^1(\W;B(\lambda)') \cong \HH^1(\W;A(\lambda)),
    \end{equation}
    it follows that $\HH^1(\W;A(\lambda))$ is also one-dimensional. By Remark \ref{rem:cocycle to derivation dictionary}, the image of $\overline{\Omega}_{\Mix}^B$ under the isomorphism \eqref{eq:mixing B to derivations A} is $\overline{d}_\lambda^A$, where $\overline{d}_\lambda^A$ is the class of $d_\lambda^A$ in $\HH^1(\W;A(\lambda))$. Therefore,
    $$\Der(\W,A(\lambda)) = \Inn(\W,A(\lambda)) \oplus \CC d_\lambda^A,$$
    as required.
    
    The computation of $\Der(\W,B(\lambda))$ follows similarly by applying Propositions \ref{prop:W_X mixing cocycles} and \ref{prop:W_A mixing cocycles}, and the isomorphism
    $$\HH^2_{\Mix}(\W_A(\lambda)) \cong \HH^1(\W;A(\lambda)') \cong \HH^1(\W;B(\lambda)),$$
    using the correspondences in Remark \ref{rem:cocycle to derivation dictionary}.
\end{proof}

We now proceed with the computation of $\End_\W(X(\lambda))$, which is the last step in the proof of Theorem \ref{thm:derivations}.

\begin{proposition}\label{prop:endomorphisms of X}
    Let $X \in \{A,B\}$ and $\lambda \in \PP^1$. Then
    $$\End_\W(X(\lambda)) = \CC d_{\Ab},$$
    where $d_{\Ab}$ is defined in Notation \ref{ntt:derivations}.
\end{proposition}
\begin{proof}
    The internally graded structures of $\W$ and $X(\lambda)$ imply that any $\W$-endomorphism of $X(\lambda)$ is graded of degree 0. Letting $\varphi \in \End_\W(X(\lambda))$, this means that
    $$\varphi(X_n) = \theta_n X_n$$
    for some $\theta_n \in \CC$. Replacing $\varphi$ with $\varphi - \theta_0 d_{\Ab}$, we may assume that $\theta_0 = 0$. To prove the result, it suffices to show that $\varphi = 0$. We now analyse the cases $X = A$ and $X = B$ separately.

    \case{1}{$X = A$.}

    The equality $\varphi(L_n \cdot A_m) = L_n \cdot \varphi(A_m)$ gives
    $$(n + m + n(n + 1) \lambda \delta_m^0)\theta_m = (n + m + n(n + 1) \lambda \delta_m^0)\theta_{n+m}$$
    for all $n,m \in \ZZ$. Therefore, $\theta_n = \theta_0 = 0$ for all $n \in \ZZ$. This completes the proof in the case $X = A$.

    \case{2}{$X = B$.}

    The equality $\varphi(L_n \cdot B_m) = L_n \cdot \varphi(B_m)$ gives
    $$(m - n(n + 1)\lambda \delta_{n+m}^0)\theta_m = (m - n(n + 1)\lambda \delta_{n+m}^0)\theta_{n+m}$$
    for all $n,m \in \ZZ$. Therefore, $\theta_n = \theta_0 = 0$ for all $n \in \ZZ$, so we are done.
\end{proof}

We are now ready to prove Theorem \ref{thm:derivations}.

\begin{proof}[Proof of Theorem \ref{thm:derivations}]
    By Corollary \ref{cor:derivations of WX split}, we have
    $$\HH^1(\W_X(\lambda);\W_X(\lambda)) \cong \HH^1(\W;X(\lambda)) \oplus \End_\W(X(\lambda)).$$
    Proposition \ref{prop:derivations W to X} computes the cohomology space $\HH^1(\W;X(\lambda))$, and Proposition \ref{prop:endomorphisms of X} computes $\End_\W(X(\lambda))$.
\end{proof}

\section{Cohomology of \texorpdfstring{$\widetilde{\W}$}{W tilde}}\label{sec:W tilde}

We finish the paper by computing one-dimensional central extensions and derivations of $\widetilde{\W} \coloneqq \W \ltimes \widetilde{I}$. For ease of notation, we view $\widetilde{I}$ as the space of Laurent polynomials with no constant term. Furthermore, thanks to the isomorphism $\widetilde{I} \cong \widetilde{J}$, we will often switch between $\widetilde{I}$ and $\widetilde{J}$ depending on the situation.

We start with the computation of one-dimensional central extensions.

\begin{theorem}\label{thm:central extensions W tilde}
    We have $\dim(\HH^2(\widetilde{\W})) = 4$. Specifically,
    $$\HH^2(\widetilde{\W}) = \CC \overline{\Omega}_{\Vir} \oplus \CC \overline{\Omega}_1 \oplus \CC \overline{\Omega}_2 \oplus \CC \overline{\Omega}_{\Ab},$$
    where $\Omega_{\Vir}$ is the Gelfand--Fuchs cocycle in \eqref{eq:Virasoro cocycle}, and the other cocycles are defined as follows:
    $$\begin{gathered}
        \Omega_1(f\del,g) = \mathcal{B}(df',g) = \Res(g \ df'), \\
        \Omega_2(f\del,g) = \mathcal{B}(d(t^{-1}f),g) = \Res(g \ d(t^{-1}f)), \\
        \Omega_{\Ab}(f,g) = \ang{f,g},
    \end{gathered}$$
    where $\ang{-,-}$ and $\mathcal{B}$ are defined in \eqref{eq:self duality of I} and \eqref{eq:B form} respectively, and we have only specified the non-zero components of the cocycles.
\end{theorem}

To prove Theorem \ref{thm:central extensions W tilde}, we use the decomposition
$$\HH^2(\widetilde{\W}) = \HH^2(\W) \oplus \HH^1(\W;\widetilde{I}) \oplus \B_\W(\widetilde{I})$$
from Proposition \ref{prop: cohomology splitting}, recalling the self-duality $\widetilde{I}' \cong \widetilde{I}$.

As mentioned in Section \ref{sec:preliminaries}, the classes of $\delta_1$ and $\delta_2$ span $\HH^1(\W;\widetilde{I})$. The proof of this follows as an immediate consequence of Proposition \ref{prop:derivations W to X}.

\begin{corollary}\label{cor:derivations W to I tilde}
    We have $\Der(\W,\widetilde{I}) = \Inn(\W,\widetilde{I}) \oplus \CC \delta_1 \oplus \CC \delta_2$, where $\delta_1$ and $\delta_2$ are defined in Section \ref{sec:preliminaries}. Consequently,
    $$\dim(\HH^1(\W;\widetilde{I})) = 2.$$
\end{corollary}
\begin{proof}
    Recall the isomorphism
    $$\widetilde{I} \cong \widetilde{J} = \W \cdot A(0) = \spn\{A_n \mid n \in \ZZ \nonzero\}.$$
    Since $\widetilde{J} \subseteq A(0)$, we have
    $$\Der(\W,\widetilde{J}) \subseteq \Der(\W,A(0)).$$
    Therefore, $\Der(\W,\widetilde{J})$ consists of the elements of $\Der(\W,A(0))$ whose images are contained in $\widetilde{J}$. By Proposition \ref{prop:derivations W to X}, all elements of $\Der(\W,A(0))$ have this property, in other words,
    $$\Der(\W,A(0)) = \Der(\W,\widetilde{J}).$$
    Noting that $\del_0^A = \delta_1$ and $\delta_0^A = \delta_2$, it follows that $\Der(\W,\widetilde{J}) = \Inn(\W,\widetilde{J}) \oplus \CC \delta_1 \oplus \CC \delta_2$. It is easy to see that the sum is direct, because $\Inn(\W,\widetilde{J})$ does not contain any elements of degree zero.
\end{proof}

To complete the proof of Theorem \ref{thm:central extensions W tilde}, it remains to compute $\B_\W(\widetilde{I})$, which we do next.

\begin{lemma}\label{lem:invariant forms on I tilde}
    The unique (up to multiplication by a scalar) $\W$-invariant skew-symmetric form on $\widetilde{I}$ is $\ang{-,-}$, where $\ang{-,-}$ is defined in \eqref{eq:self duality of I}. Consequently,
    $$\dim(\B_\W(\widetilde{I})) = 1.$$
\end{lemma}
\begin{proof}
    Suppose $\Omega \in \B_\W(\widetilde{J}) \nonzero$. By Theorem \ref{thm:internal grading},
    $$\Omega(A_n,A_m) = \alpha(n)\delta_{n+m}^0$$
    for all $n,m \in \ZZ \nonzero$, where $\alpha \colon \ZZ \nonzero \to \CC$. By the $\W$-invariance of $\Omega$, we have
    $$\Omega(A_n, L_{-n - m} \cdot A_m) + \Omega(L_{-n - m} \cdot A_n,A_m) = 0$$
    for all $n,m \in \ZZ \nonzero$. Therefore,
    $$n \alpha(n) - m \alpha(m) = 0$$
    for all $n,m \in \ZZ \nonzero$. Setting $m = 1$, we get $\alpha(n) = \frac{\alpha(1)}{n}$ for all $n \in \ZZ \nonzero$.

    By assumption, $\Omega \neq 0$, so $\alpha(1) \neq 0$. Rescaling if necessary, we may assume that $\alpha(1) = 1$. Therefore,
    $$\Omega(A_n,A_m) = \frac{1}{n}\delta_{n + m}^0$$
    is the unique (up to multiplication by a scalar) $\W$-invariant skew-symmetric form on $\widetilde{J}$. Under the isomorphism $\widetilde{J} \cong \widetilde{I}$, the form $\Omega$ is precisely $\ang{-,-}$.
\end{proof}

The computation of $\HH^2(\widetilde{\W})$ follows easily.

\begin{proof}[Proof of Theorem \ref{thm:central extensions W tilde}]
    We use the decomposition
    $$\HH^2(\widetilde{\W}) = \HH^2(\W) \oplus \HH^1(\W;\widetilde{I}) \oplus \B_\W(\widetilde{I})$$
    from Proposition \ref{prop: cohomology splitting}, and the correspondences between $\HH^1(\W;\widetilde{I})$ and mixing cocycles, and between $\B_\W(\widetilde{I})$ and abelian cocycles.

    It is easy to see that $\delta_1$ and $\delta_2$ give rise to the cocycles $\Omega_1$ and $\Omega_2$, respectively. Therefore, $\overline{\Omega}_1$ and $\overline{\Omega}_2$ span the space of mixing cocycles of $\widetilde{\W}$, by Corollary \ref{cor:derivations W to I tilde}.

    Certainly $\Omega_{\Ab}$ is simply the form $\ang{-,-}$ trivially extended to the whole of $\widetilde{\W}$. Therefore, by Lemma \ref{lem:invariant forms on I tilde}, $\Omega_{\Ab}$ is the unique (up to scalar multiplication) abelian cocycle on $\widetilde{\W}$.
\end{proof}

Next, we compute Leibniz central extensions of $\widetilde{\W}$. As we did in Section \ref{sec:Leibniz}, we do this by computing $\Inv(\widetilde{\W})$. The computation of $\Inv(\widetilde{\W})$ follows easily from the computation of $\Inv(\W_X(\lambda))$ from Section \ref{sec:Leibniz}.

\begin{theorem}\label{thm:Leibniz W tilde}
    We have $\HL^2(\widetilde{\W}) \cong \HH^2(\widetilde{\W})$.
\end{theorem}
\begin{proof}
    By Proposition \ref{prop:Leibniz InvForm exact sequence}, it suffices to show that $\Inv(\widetilde{\W}) = 0$. Letting $\theta \in \Inv(\W)$, the proof of Lemma \ref{lem: components of invariants forms on W_X()} implies that
    \begin{align*}
        \theta(L_n, L_m) = 0 \quad &\text{for all } n,m \in \ZZ; \\
        \theta(L_n, A_m) = 0 \quad &\text{if } n \in \ZZ, m \in \ZZ \nonzero, \text{ and } n + m \neq 0; \\
        \theta(A_n, A_m) = 0 \quad &\text{for all } n,m \in \ZZ \nonzero,
    \end{align*}
    where we view $\widetilde{\W}$ as $\W \ltimes \widetilde{J} \subseteq \W_A(0)$. Therefore, it remains to show that $\theta(L_n, A_{-n}) = 0$ for all $n \in \ZZ \nonzero$. This is done in the proof of Proposition \ref{prop:InvForm}.
\end{proof}

We end with the computation of derivations of $\widetilde{\W}$.

\begin{theorem}\label{thm:derivations W tilde}
    We have $\Der(\widetilde{\W}) = \Inn(\widetilde{\W}) \oplus \CC \delta_1 \oplus \CC \delta_2 \oplus \CC d_{\Ab}$, where $\delta_1$ and $\delta_2$ are defined in Section \ref{sec:preliminaries}, and $d_{\Ab}$ is defined as:
    $$d_{\Ab}(f\del) = 0, \quad \restr{d_{\Ab}}{\widetilde{I}} = \id_{\widetilde{I}},$$
    for all $f \in \CC[t,t^{-1}]$. Consequently, $\dim(\HH^1(\widetilde{\W};\widetilde{\W})) = 3$.
\end{theorem}

To prove Theorem \ref{thm:derivations W tilde}, we would like to apply Proposition \ref{prop:derivation splitting}, which requires us to prove that $\Hom_\W(\widetilde{I},\W) = 0$.

\begin{lemma}\label{lem:no homs I tilde to W}
    We have $\Hom_\W(\widetilde{I},\W) = 0$.
\end{lemma}
\begin{proof}
    Let $\varphi \in \Hom_\W(\widetilde{J},\W) = 0$. The internally graded structures of $\widetilde{J}$ and $\W$ imply that there exist $\eta_n \in \CC$ such that
    $$\varphi(A_n) = \eta_n L_n$$
    for all $n \in \ZZ \nonzero$. Since $\varphi([L_n,A_m]) = [L_n,\varphi(A_m)]$, we get
    $$(n + m)\eta_{n + m} = (m - n)\eta_m$$
    for all $n,m \in \ZZ \nonzero$. Setting $n = -m$, we deduce that $\eta_m = 0$ for all $m \in \ZZ \nonzero$, so $\varphi = 0$.
\end{proof}

By Proposition \ref{prop:derivation splitting} and Lemma \ref{lem:no homs I tilde to W}, we know that $\HH^1(\widetilde{\W})$ decomposes into
$$\HH^1(\widetilde{\W}) \cong \HH^1(\W;\widetilde{I}) \oplus \End_\W(\widetilde{I}).$$
Since $\HH^1(\W;\widetilde{I})$ was already computed in Corollary \ref{cor:derivations W to I tilde}, it only remains to compute $\End_\W(\widetilde{I})$.

\begin{lemma}\label{lem:endomorphisms I tilde}
    We have $\End_\W(\widetilde{I}) = \CC \id_{\widetilde{I}}$.
\end{lemma}
\begin{proof}
    Let $\varphi \in \End_\W(\widetilde{J}) \nonzero$. By the internally graded structures of $\W$ and $\widetilde{J}$, there exist $\theta_n \in \CC$ such that
    $$\varphi(A_n) = \theta_n A_n$$
    for all $n \in \ZZ \nonzero$. Since $\varphi$ is a map of $\W$-modules, we have $\varphi(L_n \cdot A_m) = L_n \cdot \varphi(A_m)$, and thus
    $$(n + m)\theta_{n + m} = (n + m)\theta_m$$
    for all $n,m \in \ZZ \nonzero$. It follows that $\theta_n = \theta_1$ for all $n \in \ZZ \nonzero$. Since $\varphi \neq 0$ by assumption, we know that $\theta_1 \neq 0$. Rescaling if necessary, we may assume that $\theta_1 = 1$. Therefore, $\varphi(A_n) = A_n$ for all $n \in \ZZ \nonzero$, so $\varphi = \id_{\widetilde{J}}$.
\end{proof}

\begin{proof}[Proof of Theorem \ref{thm:derivations W tilde}]
    Follows immediately from the decomposition
    $$\HH^1(\widetilde{\W}) \cong \HH^1(\W;\widetilde{I}) \oplus \End_\W(\widetilde{I})$$
    combined with Corollary \ref{cor:derivations W to I tilde} and Lemma \ref{lem:endomorphisms I tilde}.
\end{proof}

\end{document}